\makeatletter \setcounter{page}{1}
\def\singlespace{\def\baselinestretch{1}\@normalsize}
\DeclareMathOperator*{\diag}{diag}
\DeclareMathOperator*{\RASE}{RASE}
\newcommand{\OR}{\scriptscriptstyle{\mathrm{OR}}}
\theoremstyle{definition}
\newtheorem{theorem}{Theorem}[section]
\newtheorem{lemma}{Lemma}[section]
\newtheorem{proposition}{Proposition}[section]
\newtheorem{remark}{Remark}[section]
\newtheorem{example}{Example}
\numberwithin{equation}{section}
\begin{document}

\title{Some Theoretical Results Concerning Time-varying Nonparametric Regression with Local Stationary Regressors and Error }     
\author{Jiyanglin Li}             
\author{Tao Li\thanks{Corresponding author. Email: li.tao@mail.shufe.edu.cn}}  
\affil{School of Statistics and management, Shanghai University of Finance and Economics, Shanghai, China }
\date{}

\maketitle

\begin{abstract}
With regard to a three-step estimation procedure, proposed without theoretical discussion by Li and You in \emph{Journal of Applied Statistics and Management}, for a nonparametric regression model with time-varying
regression function, local stationary regressors and time-varying AR(p) (tvAR(p)) error process ,
we established all necessary asymptotic properties for each of estimator. We derive the convergence rate and asymptotic normality of the preliminary estimation of nonparametric regression function, establish the asymptotic distribution of time-varying coefficient functions in the error term, and present the asymptotic property of the refined estimation of nonparametric regression function. In addition, with regard to the ULASSO method for variable selection and constant coefficient detection for error term structure, we show that the ULASSO estimator can identify the true error term structure consistently. We conduct two simulation studies to illustrate the finite sample performance of the estimators and validate our theoretical discussion on the properties of the estimators.     \end{abstract}

\noindent {\bf Keywords:} \ time-varying autoregressive, locally stationary, local linear regression, ULASSO, BIC        

\section{Introduction}\label{sec:intro}

The nonparametric regression model has played a prominent role in finance and economics analysis due to its capability of capturing the nonlinear relationships  between time series, which are commonly encountered in practice. In some scenario, the factors omitted from the regression model, like those included, are correlated across periods in an unknown way, which will bring errors with autocorrelation. This leads to the regression model
\begin{equation*}
  Y_t=g(\mathbf{X}_t)+e_t, \hspace{1cm} t=1,\cdots,T,
\end{equation*}
where the error process $\{e_t\}$ is autocorrelated but satisfies $E(e_t|\mathbf{X}_t)=0$.

By modeling $\{e_t\}$ as some autocorrelated process, the autocorrelation in the data can be removed so that the regression function can be estimated more efficient. For instance, \cite{xiao2003jasa} considered the case in which $\{e_t\}$ is assumed to be an invertible linear process, with the finite-order ARMA($p,q$) process as the special case, and showed that the autocorrelation function of the error process can improve the estimation of the regression function. \cite{sun2006ecotheo} modeled $\{e_t\}$ as a finite order nonparametric AR process. \cite{liu2010nonparametric} discussed the estimation of regression function based on the models with $\{e_t\}$ following both AR($p$) and  ARMA($p,q$) process.

All aforementioned literatures share the same assumption that $\{\mathbf{X}_t\}$ and $\{e_t\}$ are strictly stationary. In practice, this assumption sometimes is hard to justify since the time series are often observed with trends. One typical approach for applying the time series analysis is to remove the deterministic trend and seasonality components. Various nonparametric detrending procedures have been developed for the time series that contains the smooth trend and ARMA or AR error term, including those of \cite{qiu2013efficient, shao2017oracally, schroder2013adaptive, truong1991nonparametric}, among others. For more detrending methods, the reader is referred to \cite{brockwell2016introduction}. As a result of detrending, the model used is hard to reveal the evolutionary nature of the original data. In recent, an alternative approach for modeling nonstationary time series, viz, the locally stationary time series models come into researchers' view. Locally stationary process is introduced by \cite{dahlhaus1996}, which can be used to model the nonstationary time series directly. Intuitively speaking, a process is locally stationary if over short periods of time (i.e., locally in time) it behaves in an approximately stationary way. Compared to the methods with the (weak) stationary assumption, locally stationary time series model seems more attractive since it can better describe the nonstationary behavior. Due to this fact, several models for locally stationary processes have been proposed in the recent literature. \cite{BeDa2006} discussed the time-varying AR($p$) models with the selection of order $p$. \cite{vogt2012nonparametric} studied nonparametric regression, which includes a wide range of interesting nonlinear time series such as nonparametric autoregressive models. \cite{pei2018nonparametric} developed methods for inference in nonparametric time-varying fixed effects panel data models that allow for locally stationary regressors.

In this paper, we focus on a class of nonparametric regression models with time-varying regression function, local stationary regressors and time-varying AR($p$) (tvAR($p$)) error process, which is studied by \cite{liyouJASM}. The model is given by
\begin{equation}\label{model:ourmodel}
   Y_t=g(t/T, \mathbf{X}_t)+e_t, \qquad e_t-\sum_{i=1}^p\phi_i(t/T)e_{t-i}=\epsilon_{t},\qquad\qquad t=1, \cdots, T,
\end{equation}
where $g(\cdot,\cdot)$ and $\phi_i(\cdot)$ are unknown functions and allowed to change smoothly over time. The $d-$dimension covariates $\mathbf{X}_t$ are assumed to be locally stationary. Without loss of generality, set $d=1$ in this paper. The error process $\{e_t\}$ satisfies $E(e_t|X_t,X_{t-1},\dots)=0$. The white noise $\{\epsilon_t\}$ is independent and identically distributed (i.i.d.) with mean zero and variance $\sigma^2$, and $ E(\epsilon_t|e_{t-1},e_{t-2},\dots,X_t,X_{t-1},\dots)=0$. As discussed by \cite{vogt2012nonparametric}, the tvAR($p$) error process $\{e_t\}$, under some mild conditions, is locally stationary.

Obviously, model \eqref{model:ourmodel} is a quite general form. If $g(\cdot,\cdot)$ and $\phi_i(\cdot)$ are time-invariant, model \eqref{model:ourmodel} is specialized as
\begin{equation}\label{model:timeinvariant}
  Y_t=g(\mathbf{X}_t)+e_t, \qquad e_t-\sum_{i=1}^p\phi_ie_{t-i}=\epsilon_{t},\qquad\qquad t=1, \cdots, T,
\end{equation}
which are discussed in \cite{sun2006ecotheo, liu2010nonparametric}. \cite{liu2010nonparametric} proposed an iterative
estimation procedure for model \eqref{model:timeinvariant} and showed that the iterative estimator is more efficient than the estimator by incorporating the
correlation information of the error into the local linear regression. Motivated by this, Li and You \cite{liyouJASM} defined a three-step estimation procedure for model \eqref{model:ourmodel} and presented its applications in finance through two real data. They first obtained a preliminary estimation for $g(\cdot,\cdot)$ by local linear regression, ignoring the time-varying autoregressive structure of the error term. Then the local linear methods was employed on the residuals from the first step to obtain the estimation of the time-varying autoregressive coefficient functions $\phi_i(\cdot)$. At last, the autocorrelated error term was eliminated by plugging in the estimations obtained from the first two step and a refined estimation of $g(\cdot,\cdot)$ was obtained by using local linear methods again. Intuitively, the refined estimation should be more efficient than the preliminary estimation, which is verified by the real data in \cite{liyouJASM}, since the autocorrelated error term was estimated and eliminated. Unfortunately, no any theoretically results on the estimation was provided by \cite{liyouJASM}. Due to its valuable application, it is worthy to discuss the estimation theoretically. In this paper, we present the asymptotical properties of  preliminary estimation of $g(\cdot,\cdot)$, the estimation of $\phi_i(\cdot)$, and the refined estimation of $g(\cdot,\cdot)$. Moreover, the fact that the refined estimation of $g(\cdot,\cdot)$ is more efficient than the preliminary estimation is proved. This is also illustrated by the simulation studies. For the real data application, the readers are referred to \cite{liyouJASM}.

The rest of the paper is organized as follows. Section \ref{sec:estimator} introduces some notations and presents the estimation method given by \cite{liyouJASM}. Section 3 is the main part of the paper, in which all asymptotical properties of the estimations are provided. At last, the simulation studies are conducted in Section 4.

\section{Notations and estimation procedure}\label{sec:estimator}
In this section, some notations are introduced and the estimation method given by \cite{liyouJASM} are presented briefly for the readers' convenience. The readers are referred to \cite{liyouJASM} for more details.

For model \eqref{model:ourmodel}, we assume, without loss of generality, that the covariate $\mathbf{X}_t=X_t$ is 1-dimensional.

\emph{step 1. The preliminary estimator of $g(\cdot,\cdot)$.} Ignoring the time-varying autoregressive structure of the error term, and by applying local linear fitting method (\cite{fan1996local}) and locally weighted least square estimation method, the preliminary estimator of $g(\cdot,\cdot)$ is derived as:
\begin{equation}\label{eq:preest}
     \begin{pmatrix}
      \hat{g}(u,x) \\
      h\frac{\partial{\hat{g}(u,x)}}{\partial{u}} \\
      h\frac{\partial{\hat{g}(u,x)}}{\partial{x}}
    \end{pmatrix}
  = \big(\mathbf{Z}(u,x)^\top\mathbf{W}(u,x)\mathbf{Z}(u,x)\big)^{-1}\mathbf{Z}(u,x)^\top\mathbf{W}(u,x)\mathbf{Y}.
\end{equation}
where $\mathbf{Y}=(Y_{1},\cdots, Y_{T})^\top$, $\mathbf{1}_T=(1,\cdots,1)^T$ , $\mathbf{U}_u=(\frac{1/T-u}{h}, \cdots,\frac{T/T-u}{h})^\top$, $\mathbf{X}_x=(\frac{X_1-x}{h}, \cdots,$ $\frac{X_T-x}{h})^\top$, $\mathbf{Z}(u,x) = (\mathbf{1}_T, \mathbf{U}_u,\mathbf{X}_x)$.   $\mathbf{W}(u,x)=\diag\Big\{K_h(1/T-u)K_h(X_1-x),\cdots,K_h(T/T-u)K_h(X_T-x)\Big\}$,
$K_h(\cdot)=h^{-1}K(\cdot/h)$ and $K(\cdot): \mathbb{R}\mapsto\mathbb{R}$ is a zero-symmetric kernel function with compact support. It should be noted that the product kernel is applied here and the bandwidths for both time and covariate $X_t$ are assumed to be same. Following the similar procedures in \cite{ruppert1994multivariate} and \cite{pei2018nonparametric}, the results can be easily modified to allow for non-product kernel and different bandwidths.

\emph{step 2. The estimator of $\phi_i(\cdot)$.} Let $\hat{e}_t=Y_t-\hat{g}(t/T, X_t)$ be the estimate of the unobservable error term $e_t$. For the model
$ \hat{e}_t-\sum_{i=1}^p\phi_i\big(t/T\big)\hat{e}_{t-i}\approx\epsilon_{t}$ with the assumption that $p$ is known, by minimizing
\begin{align}
  L(\boldsymbol{\alpha}(u)|\hat{e}, u)&=\sum_{t=p+1}^T\Big\{\hat{e}_t-\sum_{k=1}^p\Big(\phi_{k}(u)+\phi_{k}^{'}(u)(t/T-u)\Big)\hat{e}_{t-k}\Big\}^2K_{h_e}(t/T-u) \label{eqn:least-sqaures},
\end{align}
the estimation of $\boldsymbol\alpha(u)=(\boldsymbol\phi(u)^\top, h_e\boldsymbol\phi'(u)^\top)^{\top}=(\alpha_1(u),\cdots, \alpha_{2p}(u))^\top$ can be obtained as
\begin{align}\label{solution:tvAR}
\hat{\boldsymbol\alpha}(u)&\stackrel{}{=}\begin{pmatrix}
                        \hat{\boldsymbol\phi}(u) \\h_e\hat{\boldsymbol\phi'}(u)
                   \end{pmatrix}
=\begin{pmatrix} \mathbf{I}_p& \mathbf{O}_p \\\mathbf{O}_p & \mathbf{I}_p \end{pmatrix}
(\hat{\mathbf{Z}}_u^\top\mathbf{W}_e\hat{\mathbf{Z}}_u)^{-1}\hat{\mathbf{Z}}_u^\top\mathbf{W}_{e}\hat{\mathbf{e}}_T,
\end{align}
where $\mathbf{I}_p$ is $p\times p$ identity matrix, $\mathbf{O}_p$ is $p\times p$ zero matrix. Let $\hat{\mathbf{e}}_T=(\hat{e}_{p+1},\cdots,\hat{e}_{T})$.
$\hat{\mathbf{Z}}_u=(\hat{\mathbf{e}},~ \mathbf{D}_u\hat{\mathbf{e}})$, and $\hat{\mathbf{e}}=(\hat{\mathbf{e}}_{p},\cdots,\hat{\mathbf{e}}_{T-1})^\top$ with $\hat{\mathbf{e}}_{t-1}=(\hat{e}_{t-1},\hat{e}_{t-2},\cdots,\hat{e}_{t-p})^\top$ for $t=p+1,\cdots,T$, and $\mathbf{D}_u=diag\left\{\frac{(p+1)/T-u}{h_e},~\frac{(p+2)/T-u}{h_e} ,\cdots, \frac{T/T-u}{h_e}\right\}$.
Let $\mathbf{W}_{e}=\diag\Big\{K_{h_e}((p+1)/T-u),K_{h_e}((p+2)/T-u),\cdots,K_{h_e}(T/T-u)\Big\}$,
where $K_{h_e}(\cdot)=h_e^{-1}K(\cdot/h_e)$.

In Section \ref{sec:property}, we show that $\hat{\boldsymbol{\phi}}(u)$ is efficient with the same convergence rate and has the same asymptotic distribution as the estimator when $e_t$ is observable, which is given by \cite{kim2001nonparametric}.

Moreover, \cite{liyouJASM} presented the solution to the problem that the order $p$ of the error series is unknown in the real study.
Denote the true value of $\phi_k(\cdot)$ as $\phi^*_k(\cdot)$. Without loss of generality, it is assumed with $0\leq p_2\leq p_1\leq p$ that the first $p_2$ component of $\boldsymbol{\phi}(\cdot)$ are nonzero smooth functions, while the next $p_1-p_2$ components are nonzero constant, and the final $p-p_1$ components are zeros. Define $\mathcal{S}_1=\{1,\cdots,p_1\}, \mathcal{S}_2=\{1,\cdots,p_2\}$. It is obvious that identifying the coefficient functions and the constant coefficients simultaneously is equivalent to identifying these two sets together. Applying the uniform adaptive LASSO (ULASSO) method proposed by \cite{wang2012parametric}, the ULASSO estimator $\hat{\boldsymbol{\alpha}}_{\lambda, \gamma}(u) = (\hat{\boldsymbol{\phi}}_\lambda(u)^\top, h_e\hat{\boldsymbol{\phi}}^{'}_\gamma(u)^\top)^\top$ for each $u\in(0, 1)$ is defined as the minimizer of the convex function
\begin{align}\label{eqn:penalized-least-squares}
Q_{\lambda,\gamma}(\boldsymbol{\alpha}(u)|\hat{e}, u)&=L(\boldsymbol{\alpha}(u)|\hat{e}, u)
    +\lambda\sum_{k=1}^p\frac{\vert\phi_k(u)\vert}{w_k}+\gamma\sum_{k=1}^p\frac{\vert\phi^{'}_k(u)\vert}{w'_k}, 
\end{align}
where $L(\boldsymbol{\alpha}(u)|\hat{e}, u)$ are defined in \eqref{eqn:least-sqaures}, $\lambda\geq0, \gamma\geq0$ are the tuning parameters, $w_k=(\sum_{t=1}^T\tilde{\phi}_k^2(t/T)/T)^{1/2}$ and $w'_k=(\sum_{t=1}^T\tilde{\phi}_k^{'2}(t/T)/T)^{1/2}$ are the uniform adaptive weights for all $u\in(0, 1)$.  Let $\hat{\boldsymbol{\Phi}}_{\lambda, \gamma}=(\hat{\boldsymbol{\alpha}}_{\lambda, \gamma}(1/T),\cdots,\hat{\boldsymbol{\alpha}}_{\lambda, \gamma}(T/T))^\top$, and denote $\hat{\mathcal{S}}_{1\lambda}=\{k: \sum_{t=1}^T\vert\hat{\phi}_{\lambda,k}(t/T)\vert>0\}$, $\hat{\mathcal{S}}_{2\gamma}=\{k: \vert\sum_{t=1}^T\hat{\phi}^{'}_{\gamma,k}(t/T)\vert>0\}$ as the index set of the relative variables and the coefficient functions identified by $\hat\Phi_{\lambda, \gamma}$. The two sets $\hat{\mathcal{S}}_{1\lambda}$ and $\hat{\mathcal{S}}_{2\gamma}$ are
taken as the estimators of $\mathcal{S}_1$ and $\mathcal{S}_2$ respectively.

To determine the tuning parameters, \cite{liyouJASM} proposed a BIC selector:
\begin{align}\label{eqn:BIC-x-unavailable}
BIC_{\lambda, \gamma}(\hat{\boldsymbol{\Phi}}_{\lambda, \gamma}| \hat{e})=\log\{{RSS_{\lambda, \gamma}(\hat{\boldsymbol{\Phi}}_{\lambda, \gamma}| \hat{e})}\}+\widehat{df}_{\lambda, \gamma}\times\frac{\log{(Th_e)}}{Th_e},
\end{align}
where  $\widehat{df}_{\lambda, \gamma}$ is the total number of nonzero elements in both $\hat{\mathcal{S}}_{1\lambda}$ and $\hat{\mathcal{S}}_{2\gamma}$, and
\begin{align*}
{RSS_{\lambda}(\hat{\boldsymbol{\Phi}}_{\lambda, \gamma}| \hat{e})}&= \frac{1}{T^2}\sum_{i=1}^TL(\hat{\boldsymbol{\alpha}}_{\lambda, \gamma}(i/T)|e, i/T).
\end{align*}
Define $(\hat\lambda, \hat\gamma)$ to be the minimizer of \eqref{eqn:BIC-x-unavailable}. Thus, $\hat{\mathcal{S}}_{1\hat{\lambda}}$ and $\hat{\mathcal{S}}_{2\hat{\gamma}}$ are taken as the estimators of $\mathcal{S}_1$ and $\mathcal{S}_2$ respectively.

Once again, there is no any discussions on the property of $\hat{\mathcal{S}}_{1\hat{\lambda}}$ and $\hat{\mathcal{S}}_{2\hat{\gamma}}$ in \cite{liyouJASM}, which is worthwhile and necessary to study. In Section \ref{sec:property}, we show that $(\hat\lambda, \hat\gamma)$ can identify the true model consistently.

\emph{step 3. The refined estimator of $g(\cdot,\cdot)$.} By minimizing the locally weighted least square loss function
\begin{align*}
    &\sum_{t=\widehat{p_0}+1}^T \left\{Y_t-\sum_{k=1}^p\hat{\phi}_k(t/T)\big(Y_{t-k}-\hat{g}((t-k)/T, X_{t-k})-g(u,x)-\frac{\partial g(u,x)}{\partial u}(t/T-u)\right.\\
    &\hspace{3cm} \left. -\frac{\partial g(u,x)}{\partial x}(X_t-x)\right\}^2K_{h^*}(t/T-u)K_{h^*}(X_t-x),
\end{align*}
where $h^*$ is a new bandwidth, the refined estimator of $g(\cdot,\cdot)$ is derived (see\cite{liyouJASM}).
\begin{equation*}
    \begin{pmatrix}
      \hat{g}^*(u,x) \\
      h_e\frac{\partial{\hat{g}^*(u,x)}}{\partial{u}} \\
      h_e\frac{\partial{\hat{g}^*(u,x)}}{\partial{x}}
    \end{pmatrix}
   = \big(\mathbf{Z}^*(u,x)^{\top}\mathbf{W}^*(u,x)\mathbf{Z}^{*}(u,x)\big)^{-1}\mathbf{Z}^{*}(u,x)\mathbf{W}^{*}(u,x)\mathbf{Y}^{*}.
\end{equation*}
where $\hat{\mathbf{Y}}^*=(\hat{Y}^*_{\widehat{p_0}+1},  \cdots, \hat{Y}^*_{T})^\top$ with $\hat{Y}^*_{t}=Y_t-\sum_{k=1}^p\hat{\phi}_k(t/T)\big(Y_{t-k}-\hat{g}((t-k)/T, X_{t-k})$. $\mathbf{Z}^*(u,x) = (\mathbf{1}_{T-\widehat{p_0}}, \mathbf{U}^*_u,\mathbf{X}^*_x)$, $\mathbf{U}^*_u=(\frac{(\widehat{p_0}+1)/T-u}{h}, \cdots, \frac{T/T-u}{h})^\top$, $\mathbf{X}^*_x=(\frac{X_{\widehat{p_0}+1}-x}{h}, \cdots, \frac{X_T-x}{h})^\top$. $\mathbf{W}^*(u,x)=\diag\Big\{K_{h^*}(\frac{\widehat{p_0}+1}{T}-u)K_{h^*}(X_{\widehat{p_0}+1}-x),\cdots,K_{h^*}(\frac{T}{T}-u)K_{h^*}(X_T-x)\Big\}$. Li and You \cite{liyouJASM} explain the reason that the refined estimator is more efficient than the preliminary estimator intuitively. In Section \ref{sec:property}, we present the asymptotic property of the refined estimator and show that the refined estimator is more efficient than the preliminary estimator theoretically.

\section{Main results }\label{sec:property}

\cite{liyouJASM} presented the whole procedure of the estimation and the application to the finance without any discussion on the property of the estimation. Due to its wide range of applications, it is worthwhile and necessary to discuss the statistical property of the estimation. In this article, we present the asymptotic properties of all estimators in \cite{liyouJASM}. In this section, $C>0$ denotes a generic constant that may vary from line to line.

\subsection{Asymptotic results on the preliminary estimator of $g(\cdot,\cdot)$}

For the preliminary estimator of $g(\cdot,\cdot)$, $\hat{g}(u,x)$, we present its uniform convergence rate and asymptotic normality. To do so, we need the following assumptions.
\begin{enumerate}[(C1)]
  \item Both $\{X_t\}$ and $\{e_t\}$ are locally stationary, i.e., for each re-scaled time point $u\in[0, 1]$, there exist two strictly stationary processes $\{X_t(u)\}$ and $\{e_t(u)\}$ such that $\vert X_t-X_t(u)\vert\leq\Big(\Big\vert\frac{t}{T}-u\Big\vert+\frac{1}{T}\Big)U_{t}(u)$ almost surely, and $\vert e_t-e_t(u)\vert\leq\Big(\Big\vert\frac{t}{T}-u\Big\vert+\frac{1}{T}\Big)U^*_{t}(u)$ almost surely,
where $U_t(u)$ is a process of positive variables satisfying $E[(U_t(u))^\rho]\leq C$ for some $\rho>0$ and $C<\infty$, and $U^*_t(u)$ is a process of positive variables satisfying $E[(U^*_t(u))^{\rho^*}]\leq C^*$ for some $\rho^*>0$ and $C^*<\infty$.

  \item $\{X_t, e_t\}$ is $\alpha$-mixing, and the mixing coefficients $\alpha$ satisfies that $\alpha(k)\leq Ak^{-\beta}$ for some $A>0$ and $\beta>\frac{2s-2}{s-2}$ with the same $s$ in (C6).

  \item The density $f(u,x)$ of the variable $X_t(u)$ is smooth in $u$ and bounded away from zero. In particular, $f(u,x)$ is continuously differentiable.

  \item $g(u, x)$ is twice continuously partially differentiable and Lipschitz-continuous.
  \item $K(\cdot)$ has compact support $[-C_1, C_1]$ and is Lipschitz-continuous, $K(x) = K(-x)$, and $\Vert K\Vert_{\infty}=\sup_x\vert K(x)\vert<\infty$.
  \item For some $s>2$, $E|e_t^s|<\infty$.
  \item With $v_T=\log\log T$, $\beta>\frac{2s-2}{s-2}$ with the same $s$ in (C6), and $\theta=\frac{\beta(1-2/s)-2/s-3}{\beta+1}$, it holds that
      $$  Th^8\to0;\text{~~~~} \frac{v_T\log T}{T^\theta h^2}\to0; \text{~~~~} \frac{1}{T^rh^{r+1}}\to0; \text{~~~~~~as~~~~}T\to\infty.
$$
\end{enumerate}

\begin{remark}(C1) is the basic assumption for a locally stationary process defined as in \cite{vogt2012nonparametric}, which is reasonable in modeling economic and financial data. (C2) is the mixing condition for each time series, which is reasonable and allows the notation in the proofs as simple
as possible. (C3)-(C5) are the regularity conditions commonly used in locally stationary fields and nonparametric settings. (C6) together with (C2) are useful conditions proposed by \cite{hansen2008uniform} to obtain the order of the stochastic part and the variance part. (C7) involves the conditions to satisfy the optimal convergence rate.
\end{remark}

\begin{remark}  The constant $\rho$ can be regarded as a measure of how well $X_t$ is approximated by $X_t(u)$: the larger $\rho$ can be chosen, the less mass is contained in the tails of the distribution of $U_t(u)$. So the approximation of $X_t$ by $X_t(u)$ is getting better for larger $\rho$. This is also true for $\rho^*$ with regard to $e_t$.
\end{remark}

\begin{remark} In general, it is not indispensable for the kernel function to have a bounded support as long as its tails are thin (e.g., a density function that has a second moment). However, as said in \cite{fan2008nonlinear}, 'when the kernel function $K$ has a bounded support, the integration above takes place only around a neighborhood of $x$. Hence, it suffices to assume that the density $f$ has a $p$th continuous derivative at the point $x$.' For the sake of simplicity, it is assumed in this article that the kernel function $K$ has a bounded support. This assumption can be removed at the cost of lengthier arguments.
\end{remark}

The uniform convergence rate and the asymptotic normality of the preliminary estimator are given by Theorem \ref{thm:trend-convergence-rate} and \ref{thm:trend-asymptotic-normality}. The following lemma is needed to prove the theorems.

\begin{lemma}\label{lem:convergence-rate}
Let $S$ be a compact set of $\mathbb{R}$, and the bandwidth $h$ in \eqref{eq:preest} satisfies that
$$
\frac{v_T\log T}{T^\theta h^2}=o(1),
$$
with $v_T=\log\log T$, $\theta=\frac{\beta(1-2/s)-2/s-3}{\beta+1}$, and $\beta>\frac{2s-2}{s-2}$ with the same $s$ in condition (C6).  Then it holds under the conditions (C1)-(C7) that
\begin{equation*}
  \sup_{u\in[0,1],x\in S} \Big|\frac{1}{T}\sum_{t=1}^TK_h(t/T-u)K_h(X_t-x)\big(\frac{t/T-u}{h}\big)^{i}(\frac{X_t-x}{h})^{j}e_t\Big|=O_p\Big(\sqrt{\frac{\log T}{Th^2}}\Big).
\end{equation*}
\end{lemma}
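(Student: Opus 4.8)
The plan is to recognize the displayed quantity as a normalized sum of conditionally mean-zero, kernel-localized terms and to bound its supremum over $[0,1]\times S$ by the stated rate via the standard recipe for uniform kernel rates under mixing (as in \cite{hansen2008uniform}, and as used for locally stationary data in \cite{vogt2012nonparametric}): (i) truncate the unbounded summand $e_t$; (ii) discretize $[0,1]\times S$ into a fine grid and control the oscillation of the summand between grid points using the Lipschitz/compact-support properties of $K$; (iii) apply a Bernstein-type exponential inequality for $\alpha$-mixing arrays at each grid point; (iv) close with a union bound. Writing $\psi_t(u,x)=K_h(t/T-u)K_h(X_t-x)\big(\tfrac{t/T-u}{h}\big)^{i}\big(\tfrac{X_t-x}{h}\big)^{j}e_t$ and $R_T(u,x)=T^{-1}\sum_{t=1}^{T}\psi_t(u,x)$, I would first note that $E[\psi_t(u,x)]=0$: every factor of $\psi_t$ other than $e_t$ is measurable with respect to $(t/T,X_t)$, and the model assumption $E(e_t\mid X_t,X_{t-1},\dots)=0$ gives $E(e_t\mid X_t)=0$, so the tower property kills the mean. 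Thus there is no bias term and only the fluctuations of $R_T$ need control.

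For step (i) I would set $e_t^{\le}=e_t\mathbf{1}\{|e_t|\le\tau_T\}$ with $\tau_T$ a suitable power of $T$ governed by the exponent $s$ of (C6); by Markov's inequality and $E|e_t|^s<\infty$ the event $\{e_t=e_t^{\le}\text{ for all }t\le T\}$ has probability tending to one, and the residual contribution of the discarded tail to $\sup_{u,x}|R_T|$ is $o_p\big(\sqrt{\log T/(Th^2)}\big)$ — this is the role of the remaining bandwidth condition $(T^{r}h^{r+1})^{-1}\to0$ in (C7). For step (ii) I would cover $[0,1]\times S$ by $N_T\asymp T^{a}$ cubes of side $\asymp T^{-a}$ with $a$ large; since $K$ is Lipschitz with compact support $[-C_1,C_1]$ (C5), the map $v\mapsto K_h(v)$ has Lipschitz constant $O(h^{-2})$, the polynomial factors $|(t/T-u)/h|^{i}$ and $|(X_t-x)/h|^{j}$ are bounded on the kernel's support, and $|e_t^{\le}|\le\tau_T$, the product is Lipschitz in $(u,x)$ with constant $O(\tau_T h^{-3})$; since only $O(Th)$ of the $\psi_t^{\le}$ are nonzero near a given cube, the oscillation of $R_T^{\le}$ over each cube is $O(\tau_T T^{-a}h^{-2})$, negligible against $\sqrt{\log T/(Th^2)}$ for $a$ chosen large enough (compatibly with the polynomial lower bound on $h$ implied by (C7)). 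It then suffices to bound $\max_{k\le N_T}|R_T^{\le}(u_k,x_k)|$.

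For step (iii), at a fixed grid point $(u_k,x_k)$ I would invoke a Bernstein/blocking inequality for $\alpha$-mixing sequences (the version of \cite{hansen2008uniform}), which needs (a) the almost-sure bound $|\psi_t^{\le}(u_k,x_k)|\le C\tau_T/h^{2}$, immediate from $\Vert K\Vert_\infty<\infty$, and (b) a variance bound $\mathrm{Var}\big(\sum_{t=1}^{T}\psi_t^{\le}(u_k,x_k)\big)\le C\,T/h^{2}$. For (b): only the $O(Th)$ indices with $|t/T-u_k|\le C_1h$ contribute, and for each such $t$, integrating $K_h(X_t-x_k)^{2}e_t^{2}$ over $X_t$ — after replacing $X_t$ by its stationary approximation $X_t(u_k)$ via (C1), whose density $f(u_k,\cdot)$ is bounded (C3), the approximation error being of order $(h+T^{-1})U_t(u_k)$ and hence lower order by the moment bound on $U_t(u)$, and using the moment condition (C6) — gives $E[(\psi_t^{\le})^{2}]=O(h^{-3})$, while the cross-covariance terms are summable to the same order because $\alpha(k)\le Ak^{-\beta}$ with $\beta>(2s-2)/(s-2)$ (C2). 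Feeding (a) and (b) into the exponential inequality at deviation level $\eta_T=M\sqrt{\log T/(Th^{2})}$ and optimizing the block length against the mixing decay — where the exponent $\theta=\frac{\beta(1-2/s)-2/s-3}{\beta+1}$ appears — the bound for $P(|R_T^{\le}(u_k,x_k)|>\eta_T)$ decays faster than $N_T^{-1}$ exactly under the hypothesis $\frac{v_T\log T}{T^{\theta}h^{2}}=o(1)$; a union bound over the $N_T$ grid points yields $\max_{k\le N_T}|R_T^{\le}(u_k,x_k)|=O_p(\sqrt{\log T/(Th^{2})})$, and combining with steps (i)–(ii) completes the proof.

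The main obstacle is the calibration inside step (iii): the truncation level $\tau_T$ and the block length in the mixing inequality must be chosen simultaneously so that (a) the per-point exponential bound, multiplied by the grid cardinality $N_T\asymp T^{a}$, still tends to zero; (b) the discarded tail from step (i) is $o_p(\eta_T)$; and (c) the $\alpha$-mixing correction is dominated — it is precisely the interplay of $s$, $\beta$ and the induced exponent $\theta$ that makes the single hypothesis $\frac{v_T\log T}{T^{\theta}h^{2}}=o(1)$ suffice for all three. The local-stationarity replacement (C1) enters only as a lower-order perturbation, because the product kernel confines $t/T$ to an $O(h)$-window around $u$ on which $X_t$ and $X_t(u)$ differ by $O\big((h+T^{-1})U_t(u)\big)$; everything else is routine moment bookkeeping.
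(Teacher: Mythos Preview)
Your outline is correct and follows the standard recipe (truncation, discretization of the parameter space, a Bernstein-type exponential inequality for $\alpha$-mixing arrays, union bound) that underlies the uniform-rate results of \cite{hansen2008uniform} and \cite{vogt2012nonparametric}. The paper, however, does not re-derive any of this: its entire proof is the one-line observation that, under the $\alpha$-mixing condition (C2), the lemma is an immediate consequence of Theorem~4.1 of \cite{vogt2012nonparametric}, which already covers kernel-weighted sums of this form in the locally stationary setting under the same bandwidth hypothesis $\frac{v_T\log T}{T^{\theta}h^{2}}=o(1)$. Your route has the virtue of making transparent exactly where each of (C2), (C5), (C6) and (C7) enters and why the exponent $\theta$ emerges as the balance point between the truncation level, the block length in the mixing inequality, and the grid cardinality; the paper's route simply recognizes that the result is already packaged in Vogt's theorem and invokes it directly, which is far more economical but opaque about the role of the individual conditions.
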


\begin{proof}
Under condition (C2) that $\{e_t\}$ is $\alpha$-mixing, Lemma \ref{lem:convergence-rate} follows immediately from Theorem 4.1 of \cite{vogt2012nonparametric}.
\end{proof}

\begin{theorem}\label{thm:trend-convergence-rate}
Assume that conditions (C1)-(C7) hold. $r=\min\{\rho, 1\}$, where $\rho$ is defined in condition (C1).
Then we have
\begin{align*}
  \sup_{x, u}|\hat{g}(u,x)-g(u,x)| &= O_p\Big(\sqrt{\frac{\log T}{Th^2}}+\frac{1}{T^rh}+h^2\Big)
\end{align*}
\end{theorem}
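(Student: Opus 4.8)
The plan is to run the usual profile (``sandwich'') analysis of a bivariate local linear estimator, but to make every estimate uniform in $(u,x)$ and to keep explicit track of the price paid for replacing the locally stationary regressors by their stationary surrogates. Write $\mathbf{m}=(g(1/T,X_1),\dots,g(T/T,X_T))^\top$, $\boldsymbol\beta(u,x)=(g(u,x),h\,\partial_u g(u,x),h\,\partial_x g(u,x))^\top$, $\mathbf{S}_T(u,x)=\tfrac1T\mathbf{Z}(u,x)^\top\mathbf{W}(u,x)\mathbf{Z}(u,x)$, and use the exact identity $\mathbf{m}=\mathbf{Z}(u,x)\boldsymbol\beta(u,x)+\mathbf{R}(u,x)$ with $R_t(u,x)=g(t/T,X_t)-g(u,x)-\partial_u g(u,x)(t/T-u)-\partial_x g(u,x)(X_t-x)$. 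Plugging $\mathbf{Y}=\mathbf{m}+\mathbf{e}$ into \eqref{eq:preest} yields
\[
\hat g(u,x)-g(u,x)=\mathbf{e}_1^\top\mathbf{S}_T(u,x)^{-1}\Big(\tfrac1T\mathbf{Z}^\top\mathbf{W}\mathbf{R}\Big)+\mathbf{e}_1^\top\mathbf{S}_T(u,x)^{-1}\Big(\tfrac1T\mathbf{Z}^\top\mathbf{W}\mathbf{e}\Big),
\]
with $\mathbf{e}_1=(1,0,0)^\top$; the first summand is the bias part, the second the stochastic part.

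The key preliminary step is to control the denominator uniformly: I would show
\[
\sup_{u\in[0,1],\,x\in S}\big\|\mathbf{S}_T(u,x)-f(u,x)\,\mathbf{S}_0\big\|=O_p\Big(\sqrt{\tfrac{\log T}{Th^2}}+h+\tfrac{1}{T^{r}h}\Big),
\]
where $\mathbf{S}_0=\diag(1,\mu_2,\mu_2)$ with $\mu_2=\int v^2K(v)\,dv$ (the cross moments vanish by the symmetry of $K$, (C5)). This is done by (i) replacing $X_t$ by $X_t(u)$ in each kernel weight, bounding the replacement error through the Lipschitz continuity of $K$ (C5), the inequality $|X_t-X_t(u)|\le(|t/T-u|+1/T)U_t(u)$ and the moment bound $E[U_t(u)^{\rho}]\le C$ of (C1) --- this is precisely what produces the $T^{-r}h^{-1}$ term with $r=\min\{\rho,1\}$; (ii) evaluating the expectation of the resulting stationary kernel average, which equals $f(u,x)\mathbf{S}_0$ up to $O(h)$ by the smoothness of $f$ (C3) and a change of variables; and (iii) invoking a uniform law of large numbers for $\alpha$-mixing arrays of the kind used by \cite{hansen2008uniform} and \cite{vogt2012nonparametric} --- legitimate under (C2), (C5), (C6) and the bandwidth hypothesis $\tfrac{v_T\log T}{T^{\theta}h^2}\to0$ --- to pass from expectation to sample average. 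Since $f$ is bounded away from $0$ (C3), it follows that $\mathbf{S}_T(u,x)$ is invertible for all $(u,x)$ on an event of probability tending to $1$, with $\sup_{u,x}\|\mathbf{S}_T(u,x)^{-1}\|=O_p(1)$; the same computation also gives $\sup_{u,x}\tfrac1T\sum_t K_h(t/T-u)K_h(X_t-x)|\tfrac{t/T-u}{h}|^i|\tfrac{X_t-x}{h}|^j=O_p(1)$.

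For the stochastic part, the four entries of $\tfrac1T\mathbf{Z}^\top\mathbf{W}\mathbf{e}$ are exactly the averages $\tfrac1T\sum_t K_h(t/T-u)K_h(X_t-x)(\tfrac{t/T-u}{h})^i(\tfrac{X_t-x}{h})^j e_t$ ($i,j\in\{0,1\}$), which by Lemma \ref{lem:convergence-rate} are $O_p(\sqrt{\log T/(Th^2)})$ uniformly (it is the hypothesis $E(e_t\mid X_t,X_{t-1},\dots)=0$ that makes these mean zero, so that Lemma \ref{lem:convergence-rate}, i.e.\ Theorem 4.1 of \cite{vogt2012nonparametric}, applies); multiplying by the uniformly bounded $\mathbf{e}_1^\top\mathbf{S}_T^{-1}$ leaves this order unchanged. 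For the bias part, a second-order Taylor expansion of $g$ about $(u,x)$ (C4) gives, on the support of the product kernel where $|t/T-u|\le C_1h$ and $|X_t-x|\le C_1h$, the uniform bound $|R_t(u,x)|\le Ch^2$; combined with the $O_p(1)$ bound for the kernel moments from the previous paragraph this yields $\|\tfrac1T\mathbf{Z}^\top\mathbf{W}\mathbf{R}\|=O_p(h^2)$, hence a bias part of order $O_p(h^2)$ (any extra error from the local stationarity of $X_t$ here is again swept into the $T^{-r}h^{-1}$ term as in the denominator step). Adding the two parts gives the claimed rate $O_p(\sqrt{\tfrac{\log T}{Th^2}}+T^{-r}h^{-1}+h^2)$, and one checks that the remaining parts of (C7) force all discarded remainders to be of strictly smaller order.

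The main obstacle is the uniform treatment of the kernel-weighted sums when the regressors are only locally stationary, i.e.\ items (i)--(iii) above. One has to simultaneously absorb the $h^{-2}$ blow-up coming from Lipschitz-bounding the kernel against the $L^\rho$-integrability of the approximation error $U_t(u)$ supplied by (C1) --- which is what pins down the exponent $r=\min\{\rho,1\}$ --- and promote pointwise-in-$(u,x)$ estimates to uniform ones by a chaining/covering argument over the compact index set, for which the hypothesis $\tfrac{v_T\log T}{T^{\theta}h^2}\to0$ is tailored. Once this uniform machinery is in place, the bias and stochastic contributions are entirely routine.
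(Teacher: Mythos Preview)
Your proposal is correct and follows the same overall architecture as the paper --- the sandwich decomposition into a bias part and a stochastic part, the use of Lemma~\ref{lem:convergence-rate} (Vogt's uniform rate) for the stochastic part, and the uniform control of the denominator $\mathbf{S}_T(u,x)$ via the local-stationarity replacement $X_t\mapsto X_t(u)$.

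The genuine difference lies in how the bias is handled. You bound the Taylor remainder $|R_t(u,x)|\le Ch^2$ directly on the support of the product kernel and multiply by the uniformly $O_p(1)$ kernel moments; the $T^{-r}h^{-1}$ contribution then enters only through the denominator/kernel-moment step and is absorbed. The paper instead centers the bias part, writing $g^B=(g^B-Eg^B)+Eg^B$, bounds the fluctuation $g^B-Eg^B$ by another appeal to the Lemma~\ref{lem:convergence-rate} machinery, and then analyzes $Eg^B$ by an auxiliary-kernel trick (a Lipschitz $\bar K$ equal to $1$ on $[-C_1,C_1]$) that explicitly replaces $X_t$ by the stationary surrogate $X_t(t/T)$ inside the expectation; the $T^{-r}h^{-1}$ term is produced there, and the remaining ``clean'' piece yields the exact leading constant $\tfrac{\mu_2h^2}{2}\big(\partial_u^2 g+\partial_x^2 g\big)$. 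Your route is shorter and entirely adequate for Theorem~\ref{thm:trend-convergence-rate}; the paper's longer route has the payoff that it simultaneously isolates the precise asymptotic bias, which is then reused verbatim in the proof of Theorem~\ref{thm:trend-asymptotic-normality}. One cosmetic slip: $\mathbf{Z}^\top\mathbf{W}\mathbf{e}$ has three components (indices $(i,j)\in\{(0,0),(1,0),(0,1)\}$), not four.
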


\begin{proof}
By \eqref{eq:preest}, we have
\begin{align*}
  \hat{g}(u,x)-g(u,x)&=(1, 0, 0)\big(\mathbf{Z}(u,x)^\top\mathbf{W}(u,x)\mathbf{Z}(u,x)\big)^{-1}\mathbf{Z}(u,x)^\top\mathbf{W}(u,x)\mathbf{y}-g(u,x)\\
  &=(1, 0, 0)\big(\mathbf{Z}(u,x)^\top\mathbf{W}(u,x)\mathbf{Z}(u,x)\big)^{-1}\mathbf{Z}(u,x)^\top\mathbf{W}(u,x)\mathbf{G}-g(u,x) \\
  &~~~~+(1, 0, 0)\big(\mathbf{Z}(u,x)^\top\mathbf{W}(u,x)\mathbf{Z}(u,x)\big)^{-1}\mathbf{Z}(u,x)^\top\mathbf{W}(u,x)\mathbf{e} \\
  &\equiv g^B(u,x)+g^V(u,x),
\end{align*}
where
\begin{align*}
g^B(u,x)&=(1, 0, 0)\big(\mathbf{Z}(u,x)^\top\mathbf{W}(u,x)\mathbf{Z}(u,x)\big)^{-1}\mathbf{Z}(u,x)^\top\mathbf{W}(u,x)\mathbf{G}-g(u,x), \\
g^V(u,x)&=(1, 0, 0)\big(\mathbf{Z}(u,x)^\top\mathbf{W}(u,x)\mathbf{Z}(u,x)\big)^{-1}\mathbf{Z}(u,x)^\top\mathbf{W}(u,x)\mathbf{e}.
\end{align*}
By Lemma \ref{lem:convergence-rate}, we have
\begin{equation*}
  \sup_{u\in[0,1],x\in S}|g^V(u,x)|=O_p\Big(\sqrt{\frac{\log T}{Th^2}}\Big).
\end{equation*}
Applying the arguments for Lemma \ref{lem:convergence-rate} to $g^B(u,x)$, we have
\begin{equation*}
  \sup_{u\in[0,1],x\in S}|g^B(u,x)-E(g^B(u,x))|=O_p\Big(\sqrt{\frac{\log T}{Th^2}}\Big).
\end{equation*}
We also claim that
\begin{equation}\label{eqn:bias-order-2}
  \sup_{u\in[0,1],x\in S}|E(g^B(u,x))|=\frac{\mu_2h^2}{2}\Big(\frac{\partial^2{g(u,x)}}{\partial{u}^2}+\frac{\partial^2{g(u,x)}}{\partial{x}^2}\Big)+O\Big(\frac{1}{T^rh}\Big)+o_p(h^2),
\end{equation}
where $r=\min\{\rho, 1\}$. Note that $\sup_{u\in[0,1],x\in S}|E(g^B(u,x))|$ is irrelative with $e_t$, thus \eqref{eqn:bias-order-2} can be proved by following the idea of \cite{vogt2012nonparametric}.

Note that
\begin{align*}
  g^B(u,x)&=(1, 0, 0)\big(\mathbf{Z}(u,x)^\top\mathbf{W}(u,x)\mathbf{Z}(u,x)\big)^{-1}\mathbf{Z}(u,x)^\top\mathbf{W}(u,x)[\mathbf{G}-\mathbf{Z}(u,x)(g(u,x),0,0)^\top] \\
    &= (1, 0, 0)\big(\mathbf{Z}(u,x)^\top\mathbf{W}(u,x)\mathbf{Z}(u,x)\big)^{-1}\mathbf{Z}(u,x)^\top\mathbf{W}(u,x)[\mathbf{G}-g(u,x)\mathbf{1}_T].
\end{align*}
Firstly, using standard results from density estimation, we have
\begin{equation}\label{eqn:ztwz-limit}
  \frac{1}{T}\mathbf{Z}(u,x)^\top\mathbf{W}(u,x)\mathbf{Z}(u,x)=f(u, x)\otimes\diag\{1,\mu_2,\mu_2\}(1+o_p(h^2)),
\end{equation}
where $\otimes$ denotes the kronecker product. Then we consider the term $\mathbf{Z}(u,x)^\top\mathbf{W}(u,x)[\mathbf{G}-g(u,x)\mathbf{1}_T]$. Denote $\bar{K}$ as a Lipschitz-continuous function with support $[-qC_1, qC_1]$ for some $q>1$. Assume that $\bar{K}=1$ for all $x\in[-C_1, C_1]$. Let $\bar{K}_h(\cdot)=\bar{K}(\cdot/h)$, then it holds that
\begin{equation*}
  E\big(\mathbf{Z}(u,x)^\top\mathbf{W}(u,x)[\mathbf{G}-g(u,x)\mathbf{1}_T]\big)=\sum_{i=1}^4\Xi_i(u,x)
\end{equation*}
with
\begin{equation*}
  \Xi_i(u,x)=\frac{1}{T}\sum_{t=1}^TK_h(t/T-u)\xi_i(u,x),~~~~~~~~~~~ i=1,\cdots,4,
\end{equation*}
and
\begin{align*}
  \xi_1(u,x) &= E\Big[\bar{K}_h(X_t-x)\{K_h(X_t-x)-K_h(X_t(t/T)-x)\}\{g(t/T,X_t)-g(u,x)\}\Big], \\
  \xi_2(u,x) &= E\Big[\bar{K}_h(X_t-x)K_h(X_t(t/T)-x)\{g(t/T,X_t)-g(t/T,X_t(t/T))\} \Big], \\
  \xi_3(u,x) &= E\Big[\{\bar{K}_h(X_t-x)-\bar{K}_h(X_t(t/T)-x)\}K_h(X_t(t/T)-x) \\
  &~~~~\times\{g(t/T,X_t(t/T))-g(u,x)\}\Big], \\
  \xi_4(u,x) &= E\Big[K_h(X_t(t/T)-x)\{g(t/T,X_t(t/T))-g(u,x)\} \Big].
\end{align*}
We firstly consider $\Xi_1(u,x)$. Since the kernel function $K(\cdot)$ is bounded, we can find a constant $C\leq\infty$ such that $|K(x)-K(x')|\leq C|K(x)-K(x')|^r$ for $r=\min\{\rho, 1\}$. By the definition of $\bar{K}$ and the smoothness of $g(u,x)$, $\bar{K}_h(X_t-x)|g(t/T,X_t)-g(u,x)|$ can be bounded by $Ch$. Thus, it follows with conditions (C1) and (C5) that
\begin{align*}
  \Xi_1(u,x) &\leq \frac{Ch}{T}\sum_{t=1}^TK_h(t/T-u)E(|K_h(X_t-x)-K_h(X_t(t/T)-x)|) \\
  &\leq \frac{C}{T}\sum_{t=1}^TK_h(t/T-u)E\Big(\Big|K\Big(\frac{X_t-x}{h}\Big)-K\Big(\frac{X_t(t/T)-x}{h}\Big)\Big|^r\Big) \\
  &\leq \frac{C}{T}\sum_{t=1}^TK_h(t/T-u)E\Big(\Big|\frac{1}{Th}U_t(t/T)\Big|^r\Big) \\
  &\leq \frac{C}{T^rh^r}
\end{align*}
uniformly in $u$ and $x$. Using similarly arguments, we can find that $\sup_{u\in[0,1],x\in S}|\Xi_2(u,x)|\leq\frac{C}{T^rh}$ and $\sup_{u\in[0,1],x\in S}|\Xi_3(u,x)|\leq\frac{C}{T^rh^r}$. Finally, applying Lemma \ref{lem:convergence-rate} and conditions (C3) and (C4), we obtain that
\begin{equation*}
  \sup_{u\in[0,1],x\in S}|\Xi_4(u,x)|=\frac{\mu_2h^2}{2}\Big(\frac{\partial^2{g(u,x)}}{\partial{u}^2}+\frac{\partial^2{g(u,x)}}{\partial{x}^2}\Big)+o_p(h^2).
\end{equation*}
Thus \eqref{eqn:bias-order-2} holds by combining the results of $\Xi_1(u,x),\cdots,\Xi_4(u,x)$ and \eqref{eqn:ztwz-limit}. Then we have
\begin{align*}
  &~~~~~\sup_{u\in[0,1],x\in S}|\hat{g}(u,x)-g(u,x)| \\
  &=\sup_{u\in[0,1],x\in S}|\hat{g}^V(u,x)+\hat{g}^B(u,x)-E(\hat{g}^B(u,x))+E(\hat{g}^B(u,x))| \\
  &\leq\sup_{u\in[0,1],x\in S}|\hat{g}^V(u,x)|+\sup_{u\in[0,1],x\in S}|\hat{g}^B(u,x)-E(\hat{g}^B(u,x))|+\sup_{u\in[0,1],x\in S}|E(\hat{g}^B(u,x))|\\
  &=O_p\Big(\sqrt{\frac{\log(T)}{Th^2}}+\frac{1}{T^rh}+h^2\Big).
\end{align*}
\end{proof}

\begin{theorem}\label{thm:trend-asymptotic-normality}
Under the conditions (C1)-(C7), let $r=\min\{\rho, 1\}>\frac{1}{2}$, which guarantees that the bandwidth can be selected to obtain the optimal convergence rate. Then for any $u\in(0, 1)$,
\begin{eqnarray*}
\sqrt{Th^2}
    \begin{bmatrix}
      \begin{pmatrix}
      \hat{g}(u,x) \\
      h\frac{\partial{\hat{g}(u,x)}}{\partial{u}} \\
      h\frac{\partial{\hat{g}(u,x)}}{\partial{x}}
      \end{pmatrix}
      -
      \begin{pmatrix}
      {g}(u,x) \\
      h\frac{\partial{{g}(u,x)}}{\partial{u}} \\
      h\frac{\partial{{g}(u,x)}}{\partial{x}}
      \end{pmatrix}
    -
    \begin{pmatrix}
    \frac{\mu_2h^2}{2}\Big(\frac{\partial^2{g(u,x)}}{\partial^2{u}}+\frac{\partial^2{g(u,x)}}{\partial^2{x}}\Big) \\
    0 \\
    0
    \end{pmatrix}+o_p(h^2)
   \end{bmatrix}&\stackrel{D}{\longrightarrow}{D}~N(\mathbf{0}, \mathbf{V}_{u,x}),\\
    &\mathrm{as}~~ T\to\infty,
\end{eqnarray*}
where  $\mathbf{V}_{u,x}=(\frac{\int_0^1\gamma_0(u)du}{f(u, x)}\big)\boldsymbol{\Sigma}$, and
\begin{equation}\label{eqn:Sigma}
\boldsymbol{\Sigma}=
      \begin{pmatrix}
        \nu_0^2 & \frac{\nu_0\nu_1}{\mu_2} & \frac{\nu_0\nu_1}{\mu_2} \\
        \frac{\nu_0\nu_1}{\mu_2} & \frac{\nu_0\nu_2}{\mu_2^2} & \frac{\nu_1^2}{\mu_2^2} \\
        \frac{\nu_0\nu_1}{\mu_2} & \frac{\nu_1^2}{\mu_2^2} & \frac{\nu_0\nu_2}{\mu_2^2} \\
      \end{pmatrix}.
\end{equation}
\end{theorem}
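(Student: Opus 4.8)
The plan is to establish the asymptotic normality by decomposing $\hat{g}(u,x)-g(u,x)$ into its bias and stochastic (variance) parts, exactly as in the proof of Theorem~\ref{thm:trend-convergence-rate}, and then to prove a central limit theorem for the stochastic part. First I would write
\[
\begin{pmatrix} \hat{g}(u,x)\\ h\,\partial_u\hat{g}(u,x)\\ h\,\partial_x\hat{g}(u,x)\end{pmatrix}
-\begin{pmatrix} g(u,x)\\ h\,\partial_u g(u,x)\\ h\,\partial_x g(u,x)\end{pmatrix}
= \Big(\tfrac1T\mathbf{Z}^\top\mathbf{W}\mathbf{Z}\Big)^{-1}\tfrac1T\mathbf{Z}^\top\mathbf{W}\big[\mathbf{G}-\mathbf{Z}(g,h\partial_u g,h\partial_x g)^\top\big]
+\Big(\tfrac1T\mathbf{Z}^\top\mathbf{W}\mathbf{Z}\Big)^{-1}\tfrac1T\mathbf{Z}^\top\mathbf{W}\mathbf{e}.
\]
By \eqref{eqn:ztwz-limit}, $\tfrac1T\mathbf{Z}^\top\mathbf{W}\mathbf{Z}\to f(u,x)\diag\{1,\mu_2,\mu_2\}$ in probability, so the normalizing matrix converges and can be dealt with by Slutsky's theorem. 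The bias term, via a Taylor expansion of $g$ combined with the $\Xi_1,\dots,\Xi_4$ analysis already carried out, contributes the deterministic vector $(\tfrac{\mu_2 h^2}{2}(\partial_u^2 g+\partial_x^2 g),0,0)^\top + o_p(h^2)$ after premultiplication by the inverse; the condition $r>\tfrac12$ in (C7) guarantees $T^{-r}h^{-1}=o((Th^2)^{-1/2})$, so the locally-stationary approximation error $O(T^{-r}h^{-1})$ is asymptotically negligible at rate $\sqrt{Th^2}$.

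The core of the argument is the CLT for $\sqrt{Th^2}\cdot\tfrac1T\mathbf{Z}(u,x)^\top\mathbf{W}(u,x)\mathbf{e}$. I would write this as a sum $\sum_{t=1}^T \mathbf{c}_{t,T}\, e_t$ with
$\mathbf{c}_{t,T}= \tfrac1T K_h(t/T-u)K_h(X_t-x)\,(1,\tfrac{t/T-u}{h},\tfrac{X_t-x}{h})^\top$,
and apply a central limit theorem for triangular arrays of $\alpha$-mixing sequences (e.g., the small-block/large-block technique, or a martingale-difference CLT exploiting $E(e_t\mid X_t,X_{t-1},\dots)=0$ together with $E(\epsilon_t\mid e_{t-1},\dots,X_t,\dots)=0$). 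The variance computation is where the structure enters: conditioning on the covariate path and using local stationarity of $\{e_t\}$, the dominant contribution to $\mathrm{Var}(\sqrt{Th^2}\,\tfrac1T\mathbf{Z}^\top\mathbf{W}\mathbf{e})$ is
\[
h^2\cdot\tfrac1T\sum_{t,s}K_h(t/T-u)K_h(X_t-x)K_h(s/T-u)K_h(X_s-x)\,
\mathbf{m}_{t}\mathbf{m}_{s}^\top\,\mathrm{Cov}(e_t,e_s),
\]
where $\mathbf{m}_t=(1,\tfrac{t/T-u}{h},\tfrac{X_t-x}{h})^\top$. Because the product kernel in the $X$-direction forces $|X_t-x|=O(h)$ and $|X_s-x|=O(h)$, while $\mathrm{Cov}(e_t,e_s)\approx\gamma_{|t-s|}(u)$ decays in $|t-s|$, the sum localizes: the time-kernel $K_h(t/T-u)K_h(s/T-u)$ is essentially diagonal on the time scale $Th$, so $\sum_{|t-s|} \gamma_{|t-s|}(u)$ appears and one obtains $\int_0^1\gamma_0(u)\,du$... more precisely the long-run-variance-type constant $\sum_{j\in\mathbb Z}\gamma_j(u)$ times the appropriate kernel moments. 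Carrying out the bookkeeping — reducing the double sum to $\tfrac1{Th}\sum_t K^2 \cdots$ against the density $f(u,x)$, and collecting the kernel moments $\nu_0,\nu_1,\nu_2,\mu_2$ into the matrix $\boldsymbol\Sigma$ — yields $\mathbf{V}_{u,x}=\big(\int_0^1\gamma_0(u)du/f(u,x)\big)\boldsymbol\Sigma$.

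I expect the main obstacle to be the variance calculation and the verification of the Lindeberg/Lyapunov condition for the mixing CLT simultaneously: one must show that the off-diagonal ($t\neq s$) terms of the covariance sum do not contribute at leading order despite the autocorrelation in $\{e_t\}$, which requires the interplay of the summability of $\gamma_j(u)$ (guaranteed by stationarity of $\{e_t(u)\}$ under a tvAR($p$) structure), the mixing decay rate $\beta>\tfrac{2s-2}{s-2}$, and the moment bound $E|e_t|^s<\infty$ from (C6); the negligibility of the locally-stationary coupling errors $U_t(u),U^*_t(u)$ inside these second-moment sums also needs care, handled by Hölder's inequality together with $Th^8\to0$ and the bandwidth conditions in (C7). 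Once the array CLT applies, Slutsky's theorem combining the convergence of $\tfrac1T\mathbf{Z}^\top\mathbf{W}\mathbf{Z}$, the negligible bias remainder, and the Gaussian limit of the stochastic part completes the proof.
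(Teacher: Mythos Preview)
Your overall architecture---bias/variance decomposition, Taylor expansion for the bias, small-block/large-block CLT for the stochastic part, Slutsky to combine---is exactly the route the paper takes, so the strategy is correct.

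The one genuine gap is in your variance identification. You write that ``$\sum_{|t-s|}\gamma_{|t-s|}(u)$ appears\ldots more precisely the long-run-variance-type constant $\sum_{j\in\mathbb Z}\gamma_j(u)$,'' but the target is $\int_0^1\gamma_0(u)\,du$, i.e.\ \emph{only} the lag-zero autocovariance enters. The mechanism you are missing is the role of the kernel in the $X$-direction: for $l\neq 0$, $\mathrm{cov}(Z_t,Z_{t+l})$ carries the factor $E\big[K_h(X_t-x)K_h(X_{t+l}-x)\,e_te_{t+l}\big]$, and because $(X_t,X_{t+l})$ has a joint density, this expectation is $O(1)$ rather than the $O(h^{-1})$ one gets on the diagonal from $E[K_h^2(X_t-x)]$. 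Consequently $h\sum_{l\geq 1}|\mathrm{cov}(Z_1,Z_{1+l})|=o(1)$ and the cross terms are asymptotically negligible---the paper invokes Lemma~1 of Cai, Fan and Yao (2000) for exactly this step. Your heuristic that ``the time-kernel is essentially diagonal on the time scale $Th$'' is not what kills the autocovariances; it is the extra smoothing over the random regressor $X_t$. Once you replace the long-run variance by $\gamma_0(t/T)$ on the diagonal and then average over $t$ with the time-kernel weights, the constant in the theorem emerges and the rest of your blocking/Lindeberg argument goes through as stated.
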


To prove Theorem \ref{thm:trend-asymptotic-normality}, we define the following notations. $\mathbf{X}^*_x=(\frac{X_1(1/T)-x}{h}, \cdots, \frac{X_T(1/T)-x}{h})^\top$, $\mathbf{Z}^*(u,x) = (\mathbf{1}_T, \mathbf{U}_u,\mathbf{X}^*_x)$,
$\mathbf{W}^*(u,x)=\diag\Big\{K_h(1/T-u)K_h(X_1(1/T)-x),\cdots,K_h(T/T-u)K_h(X_T(1/T)-x)\Big\}$,
$\mathbf{R}^*(u,x)=(\mathbf{U}_u\odot\mathbf{U}_u, \mathbf{X}^*_x\odot\mathbf{X}^*_x, \mathbf{U}_u\odot\mathbf{X}^*_x)$. Let $\mathbf{Z}^*_t(u,x)$ and $\mathbf{R}^*_t(u,x)$ be the $t$-th row of $\mathbf{Z}^*(u,x)$ and $\mathbf{R}^*(u,x)$, respectively. Denote $\mathbf{H}_g(u,x)$ the Hessian matrix of $g$ at $(u,x)$, and $\mathbf{Q}_g(u,x)$ a $T$-dimensional vector whose $t$-th element is $(t/T-u, X_t(t/T)-x)^\top\mathbf{H}_g(u,x)(t/T-u, X_t(t/T)-x)$. Define \begin{align}\label{eqn:Gamma}
\boldsymbol\Gamma(u)&=\begin{pmatrix}
                         \gamma_0(u) & \gamma_1(u) & \gamma_2(u) & \cdots & \gamma_{p-1}(u) \\
                         \gamma_1(u) & \gamma_0(u) & \gamma_1(u) & \cdots & \gamma_{p-2}(u) \\
                         \vdots & \vdots & \vdots & \ddots & \vdots \\
                         \gamma_{p-1}(u) & \gamma_{p-2}(u) & \gamma_{p-3}(u) & \cdots & \gamma_{0}(u)
                      \end{pmatrix},
\end{align}
where $\gamma_k(u)=\sum_{j=0}^\infty\varphi_j(u)\varphi_{j+k}(u)$ is the $k$-th order auto-covariance function of the approximate stationary process $\{e_t(u)\}$ for the rescaled time point $u\in(0, 1)$.

\begin{proof}
With $\hat{g}^V(u,x)$ and $\hat{g}^B(u,x)$ as in the proof of Theorem \ref{thm:trend-convergence-rate}, we have
\begin{align*}
&~~~~~\sqrt{Th^2}
    \begin{bmatrix}
      \begin{pmatrix}
      \hat{g}(u,x) \\
      h\frac{\partial{\hat{g}(u,x)}}{\partial{u}} \\
      h\frac{\partial{\hat{g}(u,x)}}{\partial{x}}
      \end{pmatrix}
      -
      \begin{pmatrix}
      {g}(u,x) \\
      h\frac{\partial{{g}(u,x)}}{\partial{u}} \\
      h\frac{\partial{{g}(u,x)}}{\partial{x}}
      \end{pmatrix}
    \end{bmatrix} \\
&= \sqrt{Th^2}
    \begin{Bmatrix}
      \big(\mathbf{Z}(u,x)^\top\mathbf{W}(u,x)\mathbf{Z}(u,x)\big)^{-1}\mathbf{Z}(u,x)^\top\mathbf{W}(u,x)\mathbf{G}
      -
      \begin{pmatrix}
      {g}(u,x) \\
      h\frac{\partial{{g}(u,x)}}{\partial{u}} \\
      h\frac{\partial{{g}(u,x)}}{\partial{x}}
      \end{pmatrix}
    \end{Bmatrix} \\
&\hspace{2cm}~~~~+\sqrt{Th^2}\big(\mathbf{Z}(u,x)^\top\mathbf{W}(u,x)\mathbf{Z}(u,x)\big)^{-1}\mathbf{Z}(u,x)^\top\mathbf{W}(u,x)\mathbf{e} \\
&\stackrel{\vartriangle}{=}\sqrt{Th^2}B(u,x)+V(u,x),
\end{align*}
where
\begin{align*}
  B(u,x) &=
      \big(\mathbf{Z}(u,x)^\top\mathbf{W}(u,x)\mathbf{Z}(u,x)\big)^{-1}\mathbf{Z}(u,x)^\top\mathbf{W}(u,x)\mathbf{G}
      -
      \begin{pmatrix}
      {g}(u,x) \\
      h\frac{\partial{{g}(u,x)}}{\partial{u}} \\
      h\frac{\partial{{g}(u,x)}}{\partial{x}}
      \end{pmatrix}, \\
  V(u,x) &= \sqrt{Th^2}\big(\mathbf{Z}(u,x)^\top\mathbf{W}(u,x)\mathbf{Z}(u,x)\big)^{-1}\mathbf{Z}(u,x)^\top\mathbf{W}(u,x)\mathbf{e}.
\end{align*}
Henceforth, we refer to $B(u,x)$ and $V(u,x)$ as the bias part and the variance part, respectively.

By conditions (C1) and (C4), we have
\begin{align*}
  \Big|g(t/T,X_t(t/T))-g(t/T,X_t)\Big| &\leq C\Big|X_t(t/T)-X_t\Big|\leq\frac{C}{T}U_t(t/T) =o_p(1).
\end{align*}
Hence, $g(t/T,X_t(t/T))=g(t/T,X_t)+o_p(1)$. By Taylor Theorem, we have
\begin{align*}
  \mathbf{G} &= \mathbf{Z}^*(u,x)\begin{pmatrix}
    g(u, x) \\
    h\frac{\partial{g(u,x)}}{\partial{u}} \\
    h\frac{\partial{g(u,x)}}{\partial{x}}
  \end{pmatrix}
  +\frac{1}{2}\mathbf{Q}_g(u,x)+o_p(h^2\mathbf{1}_T)\\
  &= \mathbf{Z}^*(u,x)\begin{pmatrix}
    g(u, x) \\
    h\frac{\partial{g(u,x)}}{\partial{u}} \\
    h\frac{\partial{g(u,x)}}{\partial{x}}
  \end{pmatrix}+\mathbf{R}^*(u,x)\begin{pmatrix}
    \frac{h^2}{2}\frac{\partial^2{g(u,x)}}{\partial^2{u}} \\
    \frac{h^2}{2}\frac{\partial^2{g(u,x)}}{\partial^2{x}} \\
    \frac{h^2}{2}\frac{\partial^2{g(u,x)}}{\partial{u}\partial{x}} \\
  \end{pmatrix}
  +o_p(h^2\mathbf{1}_T).
\end{align*}
Thus the bias part $B(u,x)$ can be written as
\begin{align}\label{eqn:ghat-asymptotic-bias-expansion}
  B(u,x) &= \big(\mathbf{Z}^*(u,x)^\top\mathbf{W}^*(u,x)\mathbf{Z}^*(u,x)\big)^{-1}\mathbf{Z}^*(u,x)^\top\mathbf{W}^*(u,x)\mathbf{R}^*(u,x)\begin{pmatrix}
    \frac{h^2}{2}\frac{\partial^2{g(u,x)}}{\partial^2{u}} \\
    \frac{h^2}{2}\frac{\partial^2{g(u,x)}}{\partial^2{x}} \\
    \frac{h^2}{2}\frac{\partial^2{g(u,x)}}{\partial{u}\partial{x}} \\
  \end{pmatrix}+o_p(h^2).
\end{align}
Note that
\begin{equation*}
  \frac{1}{T}\mathbf{Z}^*(u,x)^\top\mathbf{W}^*(u,x)\mathbf{Z}^*(u,x)=\begin{pmatrix}
      M_{0,0} & M_{1,0} & M_{0, 1}\\
      M_{1,0} & M_{2,0} & M_{1, 1}\\
      M_{0,1} & M_{1,1} & M_{0, 2}\\
     \end{pmatrix}, \\
\end{equation*}
where
\begin{align*}
M_{i,j} &= M_{i,j}(u, x)= \frac{1}{T}\sum_{t=1}^TK_h(t/T-u)K_h(X_t-x)\big(\frac{t/T-u}{h}\big)^{i}(\frac{X_t(t/T)-x}{h})^{j}, ~~~i, j = 0, 1, 2.
\end{align*}
With the similar arguments as in \cite{ruppert1994multivariate}, we have
\begin{equation}\label{eqn:ztwz-star-limit}
  \frac{1}{T}\mathbf{Z}^*(u,x)^\top\mathbf{W}^*(u,x)\mathbf{Z}^*(u,x)=f(u, x)\otimes\diag\{1,\mu_2,\mu_2\}(1+o_p(h^2)),
\end{equation}
and
\begin{equation}\label{eqn:ztwr-star-limit}
  \frac{1}{T}\mathbf{Z}^*(u,x)^\top\mathbf{W}^*(u,x)\mathbf{R}^*(u,x)=f(u, x)\otimes
  \begin{pmatrix}
  \mu_2 & \mu_2 & 0 \\
  0 & 0 & 0 \\
  0 & 0 & 0 \\
  \end{pmatrix}
  (1+o_p(h^3)).
\end{equation}
It follows from \eqref{eqn:ghat-asymptotic-bias-expansion}, \eqref{eqn:ztwz-star-limit}, and \eqref{eqn:ztwr-star-limit} that the asymptotic bias
\begin{align}
  B(u,x)&=
  \begin{pmatrix}
      1 & 0 & 0 \\
      0 & \mu_2^{-1} & 0 \\
      0 & 0 & \mu_2{-1} \\
  \end{pmatrix}(1+o_p(1))
  \times
  \begin{pmatrix}
      \mu_2 & \mu_2 & 0 \\
      0 & 0 & 0 \\
      0 & 0 & 0 \\
  \end{pmatrix}(1+o_p(h^3))\times\begin{pmatrix}
    \frac{h^2}{2}\frac{\partial^2{g(u,x)}}{\partial^2{u}} \\
    \frac{h^2}{2}\frac{\partial^2{g(u,x)}}{\partial^2{x}} \\
    \frac{h^2}{2}\frac{\partial^2{g(u,x)}}{\partial{u}\partial{x}} \\
  \end{pmatrix}+o_p(h^2) \notag\\
  &=
  \begin{pmatrix}
    \frac{\mu_2h^2}{2}\Big(\frac{\partial^2{g(u,x)}}{\partial^2{u}}+\frac{\partial^2{g(u,x)}}{\partial^2{x}}\Big) \\
    0 \\
    0
  \end{pmatrix}+o_p(h^2). \label{eqn:ghat-asymptotic-bias}
\end{align}

Next we consider the variance part. The following proof is completed by a classical blocking technique \cite{cai2000functional}. Partition $\{1,2,\cdots,T\}$ into $2q_T+1$ subsets with small-block of size $s=s_T$ and large-block of size $l=l_T$. Let $q=\lfloor\frac{T}{r+s}\rfloor$, $\mathbf{N}_T=\frac{1}{T}\mathbf{Z}^*(u,x)^\top\mathbf{W}^*(u,x)\mathbf{e}$. For any non-zero vector $\boldsymbol{\xi}=(\xi_1,\xi_2,\xi_3)^\top$, let
\begin{align*}
  Q_T &= \boldsymbol{\xi}^\top\mathbf{N}_T=\frac{1}{T}\sum_{t=1}^T\Big\{\xi_1+\xi_2\big(\frac{t/T-u}{h}\big)+\xi_3\big(\frac{X_t(t/T)-x}{h}\big)\Big\}K_h(t/T-u)K_h(X_t(t/T)-x)e_t \\
    &\stackrel{\vartriangle}{=}\frac{1}{T}\sum_{t=1}^TZ_t,
\end{align*}
where $Z_t=(\xi_1,\xi_2,\xi_3)\begin{pmatrix} 1 \\ \frac{t/T-u}{h} \\ \frac{X_t(t/T)-x}{h} \end{pmatrix}K_h(t/T-u)K_h(X_t(t/T)-x)e_t$. Thus
\begin{align}
  Var(\sqrt{Th^2}Q_t) = Var(\frac{h}{\sqrt{T}}\sum_{t=1}^TZ_t)=\frac{h^2}{T}\sum_{t=1}^TVar(Z_t)+\frac{2h^2}{T}\sum_{l=1}^{T-1}\sum_{t=1}^{T-l}cov(Z_t, Z_{t+l}). \label{eqn:Qt-var}
\end{align}
Let $w_t=K_h^2(t/T-u)K_h^2(X_t(t/T)-x)$. It's easy to show that $E(Z_t)=0$ and
\begin{align}
  &Var(Z_t) \\=& E\Big[\Big\{\xi_1+\xi_2\big(\frac{t/T-u}{h}\big)+\xi_3\big(\frac{X_t(t/T)-x}{h}\big)\Big\}K_h(t/T-u)K_h(X_t(t/T)-x)e_t\Big]^2 \notag \\
    =& E\Big[\Big\{\xi_1+\xi_2\big(\frac{t/T-u}{h}\big)+\xi_3\big(\frac{X_t(t/T)-x}{h}\big)\Big\}K_h(t/T-u)K_h(X_t(t/T)-x)\Big]^2E(e_t^2) \notag \\
    =& E(e_t^2)\boldsymbol{\xi}^\top
      \begin{pmatrix}
        w_t & \big(\frac{t/T-u}{h}\big)w_t & \big(\frac{X_t(t/T)-x}{h}\big)w_t \\
        \big(\frac{t/T-u}{h}\big)w_t & \big(\frac{t/T-u}{h}\big)^2w_t & \big(\frac{t/T-u}{h}\big)\big(\frac{X_t(t/T)-x}{h}\big)w_t \\
        \big(\frac{X_t(t/T)-x}{h}\big)w_t & \big(\frac{t/T-u}{h}\big)\big(\frac{X_t(t/T)-x}{h}\big)w_t & \big(\frac{X_t(t/T)-x}{h}\big)^2w_t \\
      \end{pmatrix}
     \boldsymbol{\xi}\{1+O_p(h)\} \notag \\
    =& \frac{E(e_t^2)}{h^2}f(u, x)\boldsymbol{\xi}^\top
      \begin{pmatrix}
        \nu_0^2 & \nu_0\nu_1 & \nu_0\nu_1 \\
        \nu_0\nu_1 & \nu_0\nu_2 & \nu_1^2 \\
        \nu_0\nu_1 & \nu_1^2 & \nu_0\nu_2 \\
      \end{pmatrix}
     \boldsymbol{\xi}\{1+O_p(h)\}. \label{eqn:zt-var}
\end{align}
As shown in \cite{kim2001nonparametric},
\begin{equation}\label{eqn:et-var}
  E(e_t^2)=\gamma_0(t/T)+o(1),
\end{equation}
Hence, it follows from that \eqref{eqn:zt-var} and \eqref{eqn:et-var} that
\begin{equation*}
  \frac{1}{T}\sum_{t=1}^TVar(Z_t)=\frac{1}{h^2}\int_0^1\gamma_0(u)du f(u, x)\boldsymbol{\xi}^\top
      \begin{pmatrix}
        \nu_0^2 & \nu_0\nu_1 & \nu_0\nu_1 \\
        \nu_0\nu_1 & \nu_0\nu_2 & \nu_1^2 \\
        \nu_0\nu_1 & \nu_1^2 & \nu_0\nu_2 \\
      \end{pmatrix}
     \boldsymbol{\xi}\{1+O_p(h)\}.
\end{equation*}
Since $\{e_t\}$ is $\alpha$-mixing, it follows from Lemma (1) of \cite{cai2000functional} that
$ h\sum_{t=1}^{T-1}|cov(Z_1,Z_{t+1})|=o(1)$, which implies that the second term of \eqref{eqn:Qt-var} is negligible. Hence,
\begin{align*}
  Var(\sqrt{Th^2}Q_t)&=\int_0^1\gamma_0(u)du f(u, x)\boldsymbol{\xi}^\top
      \begin{pmatrix}
        \nu_0^2 & \nu_0\nu_1 & \nu_0\nu_1 \\
        \nu_0\nu_1 & \nu_0\nu_2 & \nu_1^2 \\
        \nu_0\nu_1 & \nu_1^2 & \nu_0\nu_2 \\
      \end{pmatrix}
     \boldsymbol{\xi}\{1+O_p(h)\}.
\end{align*}
For $0\leq j\leq q-1$, define
\begin{equation*}
  \eta_j=\sum_{t=j(l+s)+1}^{j(l+s)+l}Z_t;\text{~~~~} \xi_j=\sum_{t=j(l+s)+l+1}^{(j+1)(l+s)}Z_t;\text{~~~~} \zeta_q=\sum_{t=q(l+s)+1}^{T}Z_t.
\end{equation*}
Then
\begin{equation*}
  \sqrt{Th^2}Q_t=\frac{h}{\sqrt{T}}(\sum_{j=0}^{q-1}\eta_t+\sum_{j=0}^{q-1}\xi_t+\zeta_q)=\frac{h}{\sqrt{T}}(Q_1+Q_2+Q_3).
\end{equation*}
With the similar arguments of Theorem 2 in \cite{cai2000functional}, it can be shown that small block $Q_2$ and the remainder $Q_3$ are asymptotically negligible in probability, and each $\pi_j$ in large block $Q_1$ is asymptotically independent under condition (C2). Thus the asymptotic normality of $Q_1$ is derived by Lindeberg Theorem, and it holds that
\begin{equation*}
  \sqrt{Th^2}\mathbf{N}_T~~\stackrel{\mathcal{D}}{\rightarrow}~~N\Bigg(\mathbf{0},\int_0^1\gamma_0(u)du f(u, x)
      \begin{pmatrix}
        \nu_0^2 & \nu_0\nu_1 & \nu_0\nu_1 \\
        \nu_0\nu_1 & \nu_0\nu_2 & \nu_1^2 \\
        \nu_0\nu_1 & \nu_1^2 & \nu_0\nu_2 \\
      \end{pmatrix}\Bigg), ~~~~~~ \mathrm{as}~ T\to\infty.
\end{equation*}
 It together with \eqref{eqn:ztwz-star-limit} follows that
\begin{align}\label{eqn:ghat-asymptotic-variance}
\sqrt{Th^2}\big(\mathbf{Z}^*(u,x)^\top\mathbf{W}^*(u,x)\mathbf{Z}^*(u,x)\big)^{-1}\mathbf{Z}^*(u,x)^\top\mathbf{W}^*(u,x)\mathbf{e}
\stackrel{\mathcal{D}}{\rightarrow} N(\mathbf{0},\mathbf{V}_{u,x}), ~~~ \mathrm{as}~ T\to\infty,
\end{align}
where
\begin{equation*}
\mathbf{V}_{u,x}=\frac{\int_0^1\gamma_0(u)du}{f(u, x)}
      \begin{pmatrix}
        \nu_0^2 & \frac{\nu_0\nu_1}{\mu_2} & \frac{\nu_0\nu_1}{\mu_2} \\
        \frac{\nu_0\nu_1}{\mu_2} & \frac{\nu_0\nu_2}{\mu_2^2} & \frac{\nu_1^2}{\mu_2^2} \\
        \frac{\nu_0\nu_1}{\mu_2} & \frac{\nu_1^2}{\mu_2^2} & \frac{\nu_0\nu_2}{\mu_2^2} \\
      \end{pmatrix}.
\end{equation*}
Combining the results of \eqref{eqn:ghat-asymptotic-bias} and \eqref{eqn:ghat-asymptotic-variance}, the theorem is proved.
\end{proof}

Theorem \ref{thm:trend-convergence-rate} and \ref{thm:trend-asymptotic-normality} show that the preliminary estimator $\hat{g}(u,x)$ is consistent and asymptotic normal. However, it's not efficient due to the fact that the error structure is not taken into account for estimation. On the other side, \cite{liyouJASM} use it to estimate the error structure and then refine the estimator of $g(u,x)$ with the fitted error structure.

\subsection{Asymptotic results on the estimator of the error term}
In this section, we discuss the asymptotic property for the estimator of the error term,  $\hat{\boldsymbol\phi}(u)$, which is given by \cite{liyouJASM}. To do so, we need some additional conditions except for conditions (C1)-(C7).
\begin{enumerate}[(C1)]
\setcounter{enumi}{7}
  \item The function $\phi_i(\cdot)$ $(i=1,2,\cdots,p)$ is twice continuously differentiable in $u$ with uniformly bounded second ordered derivative, and the root of $\Phi(u, z)=1-\sum_{i=1}^p\phi_i(u)z^i$ are bounded away from the unit circle for each $u\in[0,1]$.
  \item As $T\to\infty$, $h_e=O(T^{-\frac{1}{5}}), h=o(T^{-\frac{1}{5}}), T^{-\frac{2}{5}}h^{-2}\log{T}\to0$.
  \item $E(\epsilon_t^4)<\infty$.
  \item $\sup_t\sum_{j=0}^\infty j^{\frac{1}{2}}\varphi_j(t/T)^2<\infty, \sup_t\sum_{j=0}^\infty j^{\frac{1}{2}}(\varphi_j'(t/T))^2<\infty.$
\end{enumerate}
\begin{remark} Condition (C8) guarantees that the approximate stationary time series $\{e_t(u)\}$ of $\{e_t\}$ around the neighborhood of $u$ is causal and satisfies $\sum_{k=-\infty}^{\infty}\vert\gamma_k(u)\vert<\infty$, where $\gamma_k(u)$, defined in \eqref{eqn:Gamma}, is the $k$-th ordered auto-covariance function of $\{e_t(u)\}$. (C9) is the condition to obtain the optimal convergence for the estimator of $\phi_k(\cdot)$. Conditions (C10) and (C11) are the same as the ones in \cite{kim2001nonparametric} for the asymptotic normality of time-varying autoregressive coefficient functions.
\end{remark}

Theorem \ref{thm:alpha-asymptotic-normality} presents that the asymptotic normality of $\hat{\boldsymbol\alpha}(u)$. To prove it, we first show a proposition for the asymptotic normality of $\tilde{\boldsymbol\alpha}(u)$, where $\tilde{\boldsymbol\alpha}(u)=\tilde{\mathbf{S}}_T^{-1}\tilde{\mathbf{t}}_T$ is the estimate of $\boldsymbol\alpha(u)$ with $\hat{e}_t$ in \eqref{solution:tvAR} replaced by its true value $e_t$. Note that this proposition is the same as the i.i.d case in \cite{cai2000efficient}. Its proof is similar to Theorem 5 of \cite{kim2001nonparametric} and therefore omitted here.

\begin{proposition}\label{prop:phi-asymptotics}
Under conditions (C1)-(C11), for any $u\in(0, 1)$,
\begin{equation*}
\sqrt{Th_e}\Big(\tilde{\boldsymbol{\alpha}}(u)-\boldsymbol{\alpha}(u)
  -\frac{h_e^2}{2}\mu_2
    \begin{pmatrix}
        \boldsymbol\phi''(u) \\
        \mathbf{0}
    \end{pmatrix}
+o(h_e^2)\Big)~\stackrel{D}{\longrightarrow}~\mathcal{N}(\mathbf{0},\boldsymbol{\Sigma}(u)),~~~~~ \mathrm{as}~T\to\infty,
\end{equation*}
where $\boldsymbol{\Sigma}(u)$ is defined in \eqref{eqn:Sigma(u)}.
\end{proposition}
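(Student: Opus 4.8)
The plan is to mimic the classical bias--variance decomposition for local linear estimators, now carried out for the time-varying autoregression and with all sample moments read off through the local stationarity of $\{e_t\}$. Write $\mathbf{e}_{t-1}=(e_{t-1},\dots,e_{t-p})^\top$ and $\mathbf{z}_t(u)=\big(\mathbf{e}_{t-1}^\top,\ \tfrac{t/T-u}{h_e}\mathbf{e}_{t-1}^\top\big)^\top$, so that replacing $\hat e_t$ by $e_t$ in \eqref{solution:tvAR} gives $\tilde{\boldsymbol\alpha}(u)=\tilde{\mathbf{S}}_T^{-1}\tilde{\mathbf{t}}_T$ with $\tilde{\mathbf{S}}_T=\frac1T\sum_{t=p+1}^T K_{h_e}(t/T-u)\mathbf{z}_t(u)\mathbf{z}_t(u)^\top$ and $\tilde{\mathbf{t}}_T=\frac1T\sum_{t=p+1}^T K_{h_e}(t/T-u)\mathbf{z}_t(u)e_t$. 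First I would insert the model identity $e_t=\sum_{k=1}^p\phi_k(t/T)e_{t-k}+\epsilon_t$ together with the second-order Taylor expansion $\phi_k(t/T)=\phi_k(u)+\phi_k'(u)(t/T-u)+\tfrac12\phi_k''(u)(t/T-u)^2+o((t/T-u)^2)$, valid uniformly on $\{t:|t/T-u|\le C_1 h_e\}$ by condition (C8). This yields $e_t=\mathbf{z}_t(u)^\top\boldsymbol\alpha(u)+\tfrac12\sum_{k=1}^p\phi_k''(u)(t/T-u)^2 e_{t-k}+\epsilon_t+R_t$ with a remainder $R_t$ of order $o((t/T-u)^2)$ times the lags, and hence the decomposition $\tilde{\boldsymbol\alpha}(u)-\boldsymbol\alpha(u)=\tilde{\mathbf{S}}_T^{-1}(\mathbf{b}_T+\mathbf{v}_T+\mathbf{r}_T)$, where $\mathbf{b}_T,\mathbf{v}_T,\mathbf{r}_T$ are the $K_{h_e}$-weighted averages of $\mathbf{z}_t(u)$ against the quadratic term, against $\epsilon_t$, and against $R_t$ respectively (the finitely many boundary terms near $t=p$ being negligible).

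Second, I would establish $\tilde{\mathbf{S}}_T\stackrel{p}{\longrightarrow}\diag\{1,\mu_2\}\otimes\boldsymbol\Gamma(u)$. The mechanism, reused in every weighted sum below, is that inside the kernel window condition (C1) permits replacing each $e_{t-j}$ by its stationary surrogate $e_{t-j}(u)$ up to an $O_p(h_e+1/T)$ error controlled by $E[U_t^*(u)^{\rho^*}]<\infty$, after which a law of large numbers for $\alpha$-mixing arrays (C2, C6) gives $\frac1T\sum_t K_{h_e}(t/T-u)\big(\tfrac{t/T-u}{h_e}\big)^m e_{t-j}(u)e_{t-k}(u)\stackrel{p}{\longrightarrow}\mu_m\gamma_{|j-k|}(u)$ for $m=0,1,2$ (with $\mu_1=0$ by symmetry of $K$), where $\gamma_\ell(u)$ is the autocovariance in \eqref{eqn:Gamma}; stacking over $j,k$ and $m$ gives the stated Kronecker limit, which is invertible by (C8). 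The bias term is then handled by writing $(t/T-u)^2=h_e^2\big(\tfrac{t/T-u}{h_e}\big)^2$: the same LLN sends the top-$p$ block of $\mathbf{b}_T$ to $\tfrac{h_e^2}{2}\mu_2\boldsymbol\Gamma(u)\boldsymbol\phi''(u)$ and the bottom block to $\tfrac{h_e^2}{2}\mu_3\boldsymbol\Gamma(u)\boldsymbol\phi''(u)=\mathbf{0}$, so premultiplying by $\tilde{\mathbf{S}}_T^{-1}\stackrel{p}{\longrightarrow}\diag\{1,\mu_2^{-1}\}\otimes\boldsymbol\Gamma(u)^{-1}$ gives the asserted leading bias $\tfrac{h_e^2}{2}\mu_2(\boldsymbol\phi''(u)^\top,\mathbf{0}^\top)^\top$; a parallel estimate, using that $R_t$'s Taylor coefficient is $o((t/T-u)^2)$ on the $h_e$-window, shows $\tilde{\mathbf{S}}_T^{-1}\mathbf{r}_T=o_p(h_e^2)$.

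Third, for the variance term $\sqrt{Th_e}\,\mathbf{v}_T=\sqrt{Th_e}\,\frac1T\sum_t K_{h_e}(t/T-u)\mathbf{z}_t(u)\epsilon_t$, I would use $E(\epsilon_t\mid e_{t-1},e_{t-2},\dots,X_t,\dots)=0$, so that the suitably normalized summands form a martingale-difference array for the natural filtration, and then invoke a martingale central limit theorem. Its hypotheses are checked as follows: the conditional covariance $\tfrac{h_e}{T}\sum_t K_{h_e}^2(t/T-u)\mathbf{z}_t(u)\mathbf{z}_t(u)^\top\sigma^2$ converges in probability, by the surrogate-plus-mixing argument now with $K^2$ in place of $K$ and using (C11) for the required moments of $e_t(u)$, to a block-diagonal limit of the form $\sigma^2\big(\diag\{\int K^2,\int v^2K^2\}\otimes\boldsymbol\Gamma(u)\big)$; the Lindeberg condition follows from $E(\epsilon_t^4)<\infty$ (C10) together with (C11). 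Hence $\sqrt{Th_e}\,\mathbf{v}_T\stackrel{D}{\longrightarrow}\mathcal{N}\big(\mathbf{0},\,\sigma^2(\diag\{\int K^2,\int v^2K^2\}\otimes\boldsymbol\Gamma(u))\big)$, and left-multiplying by $\tilde{\mathbf{S}}_T^{-1}$ and combining with the bias of the previous step via Slutsky's theorem yields the claimed limit, with the sandwich $\tilde{\mathbf{S}}_T^{-1}(\cdot)\tilde{\mathbf{S}}_T^{-1}$ reducing to the matrix $\boldsymbol\Sigma(u)$ of \eqref{eqn:Sigma(u)}.

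Finally, the routine parts are the kernel-moment bookkeeping, the Kronecker algebra, and the Slutsky combination. The two places I expect to demand genuine care are: (i) the uniform replacement of each $e_{t-j}$ by its stationary approximation $e_{t-j}(u)$ inside every $K_{h_e}$-weighted sum together with the accompanying $\alpha$-mixing law of large numbers --- this is precisely where conditions (C1), (C2) and the moment bound (C6) enter, and where the analysis departs from the strictly stationary treatment of \cite{kim2001nonparametric}; and (ii) verifying the conditional-variance convergence and the Lindeberg condition for the martingale CLT, which rely on the fourth-moment conditions (C10)--(C11). As an alternative to the martingale CLT one could run the small-block/large-block scheme already used in the proof of Theorem \ref{thm:trend-asymptotic-normality}, but the martingale route is cleaner here since $\epsilon_t$ is a genuine innovation.
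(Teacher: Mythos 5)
Your proposal is correct, and it in fact supplies considerably more detail than the paper does: the paper gives no proof of Proposition \ref{prop:phi-asymptotics} at all, stating only that the argument is ``similar to Theorem 5 of \cite{kim2001nonparametric}'' and omitting it. The route you describe---Taylor-expanding $\phi_k(t/T)$ to second order inside the kernel window, decomposing $\tilde{\boldsymbol\alpha}(u)-\boldsymbol\alpha(u)=\tilde{\mathbf{S}}_T^{-1}(\mathbf{b}_T+\mathbf{v}_T+\mathbf{r}_T)$, replacing each $e_{t-j}$ by its stationary surrogate $e_{t-j}(u)$ via (C1) and applying a mixing LLN to get $\tilde{\mathbf{S}}_T\stackrel{p}{\to}\diag\{1,\mu_2\}\otimes\boldsymbol\Gamma(u)$---is exactly the standard argument underlying the cited result, and your sandwich computation reproduces the stated $\boldsymbol\Sigma(u)$ (note the paper's $\mu_1^{-1}$ in \eqref{eqn:Sigma(u)} must be read as $\mu_2^{-1}$, and the innovation variance $\sigma^2$ is absorbed into the normalization of $\boldsymbol\Gamma$; your bookkeeping is consistent with the intended formula). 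The one genuine stylistic difference is your use of a martingale CLT for $\sqrt{Th_e}\,\mathbf{v}_T$, justified because $\epsilon_t$ is a true innovation with $E(\epsilon_t\mid e_{t-1},\dots,X_t,\dots)=0$; the paper's surrounding proofs (e.g.\ Theorem \ref{thm:trend-asymptotic-normality}) instead rely on the small-block/large-block scheme of \cite{cai2000functional}, which is what one would expect the omitted proof to use. The martingale route is cleaner here and equally valid under (C2), (C10), (C11), so either choice closes the argument.
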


\begin{theorem}\label{thm:alpha-asymptotic-normality}
Assume that conditions (C1)-(C11) are satisfied, $r=\min\{\rho, 1\}>\frac{1}{2}$, for any $u\in(0, 1)$, it holds that
\begin{equation*}
\sqrt{Th_e}\Big(\hat{\boldsymbol{\alpha}}(u)-\boldsymbol{\alpha}(u)
  -\frac{h_e^2}{2}\mu_2
    \begin{pmatrix}
        \boldsymbol\phi''(u) \\
        \mathbf{0}
    \end{pmatrix}
+o(h_e^2)\Big)~\stackrel{D}{\longrightarrow}~\mathcal{N}(\mathbf{0},\boldsymbol{\Sigma}(u)),~~~~~ \mathrm{as}~T\to\infty,
\end{equation*}
where $\boldsymbol\phi^{''}(u)=(\phi_1^{''}(u),\cdots,\phi_p^{''}(u))$, $\boldsymbol\Gamma(u)$ is given by \eqref{eqn:Gamma}, and
\begin{equation}\label{eqn:Sigma(u)}
  \boldsymbol\Sigma(u) = \begin{pmatrix}
     \nu_0 & \mu_1^{-1}\nu_1 \\
    \mu_1^{-1}\nu_1 & \mu_1^{-2}\nu_2
  \end{pmatrix}\otimes\boldsymbol\Gamma^{-1}(u).
\end{equation}
\end{theorem}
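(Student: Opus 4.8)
The plan is to show that the feasible estimator $\hat{\boldsymbol\alpha}(u)$ based on the first-stage residuals differs from the infeasible estimator $\tilde{\boldsymbol\alpha}(u)$ of Proposition~\ref{prop:phi-asymptotics} by a term that is $o_p\big((Th_e)^{-1/2}\big)$; the conclusion then follows immediately from Proposition~\ref{prop:phi-asymptotics} and Slutsky's theorem, with the centering $\boldsymbol\alpha(u)+\tfrac{h_e^2}{2}\mu_2(\boldsymbol\phi''(u)^\top,\mathbf0)^\top$ and the limiting covariance $\boldsymbol\Sigma(u)$ of \eqref{eqn:Sigma(u)} inherited unchanged.

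I would first write $\hat e_t=e_t+\Delta_t$ with $\Delta_t:=g(t/T,X_t)-\hat g(t/T,X_t)$, so that Theorem~\ref{thm:trend-convergence-rate} gives $\sup_t|\Delta_t|=O_p(\delta_T)$ with $\delta_T:=\sqrt{\log T/(Th^2)}+T^{-r}h^{-1}+h^2$, and moreover $\Delta_t=-g^V(t/T,X_t)-g^B(t/T,X_t)$ in the notation of the proof of Theorem~\ref{thm:trend-convergence-rate}, where $g^V$ is linear in $\{e_s\}$ while $g^B$ depends only on $\{X_s\}$. Setting $\hat{\mathbf S}_T:=(Th_e)^{-1}\hat{\mathbf Z}_u^\top\mathbf W_e\hat{\mathbf Z}_u$ and $\hat{\mathbf t}_T:=(Th_e)^{-1}\hat{\mathbf Z}_u^\top\mathbf W_e\hat{\mathbf e}_T$, one has $\hat{\boldsymbol\alpha}(u)=\hat{\mathbf S}_T^{-1}\hat{\mathbf t}_T$ exactly as $\tilde{\boldsymbol\alpha}(u)=\tilde{\mathbf S}_T^{-1}\tilde{\mathbf t}_T$, and the identity $\hat{\mathbf S}_T^{-1}\hat{\mathbf t}_T-\tilde{\mathbf S}_T^{-1}\tilde{\mathbf t}_T=\hat{\mathbf S}_T^{-1}(\hat{\mathbf t}_T-\tilde{\mathbf t}_T)+\hat{\mathbf S}_T^{-1}(\tilde{\mathbf S}_T-\hat{\mathbf S}_T)\tilde{\mathbf S}_T^{-1}\tilde{\mathbf t}_T$ reduces the problem to bounding $\hat{\mathbf S}_T-\tilde{\mathbf S}_T$ and $\hat{\mathbf t}_T-\tilde{\mathbf t}_T$, since $\tilde{\mathbf S}_T$ converges in probability to a positive-definite limit (the Gram limit used in Proposition~\ref{prop:phi-asymptotics}, so $\hat{\mathbf S}_T$ is invertible with probability tending to one and $\hat{\mathbf S}_T^{-1}=O_p(1)$) and $\tilde{\mathbf t}_T=O_p(1)$.

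Every entry of $\hat{\mathbf S}_T-\tilde{\mathbf S}_T$ is a kernel-weighted average $(Th_e)^{-1}\sum_t K_{h_e}(t/T-u)(\cdots)$ of cross terms $e_{t-i}\Delta_{t-j}+\Delta_{t-i}e_{t-j}+\Delta_{t-i}\Delta_{t-j}$ against a bounded factor $((t/T-u)/h_e)^a$; using $\sup_t|\Delta_t|=O_p(\delta_T)$ and the auxiliary bound $(Th_e)^{-1}\sum_t K_{h_e}(t/T-u)|e_{t-j}|=O_p(1)$ — which follows from $E|e_t|<\infty$, local stationarity (C1) and the mixing arguments underlying Lemma~\ref{lem:convergence-rate} — one gets $\hat{\mathbf S}_T-\tilde{\mathbf S}_T=O_p(\delta_T)=o_p(1)$, which is amply sufficient. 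The delicate object is $\hat{\mathbf t}_T-\tilde{\mathbf t}_T$, whose entries are averages of $e_{t-i}\Delta_t+\Delta_{t-i}e_t+\Delta_{t-i}\Delta_t$, $1\le i\le p$: here the crude bound $O_p(\delta_T)$ is not $o_p\big((Th_e)^{-1/2}\big)$, so I would substitute $\Delta_t=-g^V(t/T,X_t)-g^B(t/T,X_t)$ and treat the two parts separately. For the $g^B$ part I would condition on $\{X_s\}$ and use the conditional-moment restriction $E(e_t\mid X_t,X_{t-1},\dots)=0$ to annihilate the leading mean, leaving a conditionally mean-zero sum with conditional variance $O_p\big(h_e^{-2}T^{-1}\sup_t g^B(t/T,X_t)^2\big)$. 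For the $g^V$ part I would expand $g^V$, producing a double sum $(T^2h_e^2h^2)^{-1}\sum_{t,s}K(\cdot/h_e)K(\cdot/h)K(\cdot/h)f(\cdot)^{-1}e_{t\mp i}e_s$, whose diagonal band $|s-(t\mp i)|=O(1)$ contributes $O_p\big((Th_eh)^{-1}\big)$ and whose off-diagonal part, after $\alpha$-mixing centering, is a mean-zero fluctuation of order $O_p\big((Th_e^{3/2}h)^{-1}\big)$. Checking that $\sqrt{Th_e}$ times each of these rates vanishes under (C9) then gives $\hat{\boldsymbol\alpha}(u)-\tilde{\boldsymbol\alpha}(u)=o_p\big((Th_e)^{-1/2}\big)$, and the theorem follows.

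The hard part will be the $g^V$ contribution: the plug-in residual $\hat e_t$ carries the first-stage noise $g^V$, itself a weighted average of the very same $\{e_s\}$, so $(Th_e)^{-1}\sum_t K_{h_e}(t/T-u)e_{t-i}g^V(t/T,X_t)$ is a genuine degenerate-type double sum in $\{e_s\}$ that cannot be handled by a sup-norm bound, and the bookkeeping that makes its diagonal and off-diagonal pieces negligible at the $\sqrt{Th_e}$ scale — together with verifying that the bandwidth restrictions (C9) are precisely strong enough for this (in particular that $h$ is suitably undersmoothed relative to $h_e$) — is the step demanding the most care. A secondary, routine point is establishing the auxiliary limits of averages of $\{e_t\}$ and $\{e_te_s\}$ against $K_{h_e}(\cdot-u)$, which proceeds exactly as in the proof of Proposition~\ref{prop:phi-asymptotics}.
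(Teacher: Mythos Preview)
Your high-level skeleton coincides with the paper's: reduce to $\hat{\boldsymbol\alpha}(u)-\tilde{\boldsymbol\alpha}(u)=o_p((Th_e)^{-1/2})$ via the same matrix identity, then invoke Proposition~\ref{prop:phi-asymptotics} and Slutsky. Two points of divergence deserve comment.

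First, a genuine (though easily repaired) gap: you assert that $\hat{\mathbf S}_T-\tilde{\mathbf S}_T=O_p(\delta_T)=o_p(1)$ is ``amply sufficient''. It is not. In the identity $\hat{\boldsymbol\alpha}-\tilde{\boldsymbol\alpha}=\hat{\mathbf S}_T^{-1}(\hat{\mathbf t}_T-\tilde{\mathbf t}_T)+\hat{\mathbf S}_T^{-1}(\tilde{\mathbf S}_T-\hat{\mathbf S}_T)\tilde{\boldsymbol\alpha}$, the factor $\tilde{\boldsymbol\alpha}$ is only $O_p(1)$, so you need $\hat{\mathbf S}_T-\tilde{\mathbf S}_T=o_p((Th_e)^{-1/2})$ as well --- and you yourself observe that $\delta_T$ is \emph{not} of this order under (C9). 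The paper states this explicitly as claim (i) and treats it symmetrically with $\hat{\mathbf t}_T-\tilde{\mathbf t}_T$, which is natural since the entries of $\hat{\mathbf S}_T-\tilde{\mathbf S}_T$ have exactly the same $e\Delta+\Delta e+\Delta\Delta$ structure you already wrote down; whatever argument you use for $\hat{\mathbf t}_T-\tilde{\mathbf t}_T$ applies verbatim.

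Second, for the key cross term $\tfrac{1}{T}\sum_t K_{h_e}(t/T-u)\,e_{t-r}\Delta_t$ your route is genuinely different from the paper's. You propose to split $\Delta_t=-g^V-g^B$ into its stochastic and bias pieces and then run a conditioning argument for $g^B$ and a $U$-statistic/double-sum analysis for $g^V$. The paper instead leaves $\Delta_t$ intact and splits the \emph{other} factor via local stationarity, writing $e_{t-r}=(e_{t-r}-e_{t-r}(u))+e_{t-r}(u)$: the first piece is $O_p(h_e)$ on the kernel's support by (C1), so the sup bound on $\Delta_t$ suffices there; for the second piece the paper appeals to a second-moment bound exploiting the summable autocovariances $\sum_k|\gamma_k(u)|<\infty$ of the stationary approximation. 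The paper's route is shorter and exploits the locally stationary structure directly, though its step ``$E(M_2)=0$'' treats $\hat g-g$ as if decoupled from $\{e_t\}$ and is less transparent than your explicit bias/variance split. Your route is heavier but makes the dependence between the first-stage residuals and the $\{e_t\}$ fully explicit; either can be made to work under (C9).
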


\begin{proof}
With Slutsky Theorem, the asymptotic normality can be obtained immediately from \eqref{eqn:phi-convegence-in-probability} and Proposition \ref{prop:phi-asymptotics}. Therefore we just prove that
\begin{align}\label{eqn:phi-convegence-in-probability}
\hat{\boldsymbol\alpha}(u)-\tilde{\boldsymbol\alpha}(u)=o_p((Th_e)^{-\frac{1}{2}}).
\end{align}

Define a $2p\times2p$ matrix $\hat{\mathbf{S}}_T$ and a $2p$-dimensional vector $\hat{\mathbf{t}}_T$ as follows,
\begin{align}
  \hat{\mathbf{S}}_T &= \frac{1}{T}\hat{\mathbf{Z}}_u^\top\mathbf{W}_e\hat{\mathbf{Z}}_u=
     \begin{pmatrix}
      \hat{\mathbf{S}}_{T,0}(u) & \hat{\mathbf{S}}_{T,1}(u)\\
      \hat{\mathbf{S}}_{T,1}(u) & \hat{\mathbf{S}}_{T,2}(u)
     \end{pmatrix}, \label{eqn:S-t}\\
  \hat{\mathbf{t}}_T &= \frac{1}{T}\hat{\mathbf{Z}}_u^\top\mathbf{W}_e\hat{\mathbf{e}}_T=(\hat{\mathbf{t}}_{T,0}(u),\hat{\mathbf{t}}_{T,1}(u))^\top, \label{eqn:t-t}
\end{align}
where
\begin{align}
\hat{\mathbf{S}}_{T,i}(u)&=\frac{1}{T}\sum_{t=p+1}^TK_{h_e}(t/T-u)(\frac{t/T-u}{h_e})^{i}\hat{\mathbf{e}}_{t-1}\hat{\mathbf{e}}_{t-1}^\top, ~~~i = 0, 1, 2, 3, \label{eqn:Sij}\\
\hat{\mathbf{t}}_{T, i}(u)&=\frac{1}{T}\sum_{t=p+1}^TK_{h_e}(t/T-u)(\frac{t/T-u}{h_e})^{i}\hat{\mathbf{e}}_{t-1}\hat{e}_{t},~~~~ i=0, 1. \label{eqn:tij}
\end{align}
So \eqref{solution:tvAR} can be represented as $\hat{\boldsymbol\alpha}(u)=
\hat{\mathbf{S}}_T^{-1}\hat{\mathbf{t}}_T$.

By \eqref{eqn:Sij}, the $(r, s)$-th element of $\hat{\mathbf{S}}_{T,i}(u)$ is
\begin{align*}
  \hat{S}_{T,i}(u, r, s) & = \frac{1}{T}\sum_{t=p+1}^TK_{h_e}(t/T-u)\hat{e}_{t-r}\hat{e}_{t-s}\Big(\frac{t/T-u}{h_e}\Big)^{i}\\
   & = \frac{1}{T}\sum_{t=p+1}^TK_{h_e}(t/T-u)\Big[Y_{t-r}-\hat{g}\Big(\frac{t-r}{T}\Big)\Big]\Big[Y_{t-s}-\hat{g}\Big(\frac{t-s}{T}\Big)\Big]\Big(\frac{t/T-u}{h_e}\Big)^{i} \\
   & = \frac{1}{T}\sum_{t=p+1}^TK_{h_e}(t/T-u)\Big[e_{t-r}+g\Big(\frac{t-r}{T}\Big)-\hat{g}\Big(\frac{t-r}{T}\Big)\Big]
       \Big[e_{t-s}+g\Big(\frac{t-s}{T}\Big)-\hat{g}\Big(\frac{t-s}{T}\Big)\Big]\\
   & ~~~~~\times\Big(\frac{t/T-u}{h_e}\Big)^{i} \\
   & = \tilde{S}_{T,i}(u, r, s)+\hat{s}_1(u, r, s)+\hat{s}_2(u, r, s)+\hat{s}_3(u, r, s),
\end{align*}
where $\tilde{S}_{T,i}(u, r, s)=\frac{1}{T}\sum_{t=p+1}^TK_{h_e}(t/T-u)e_{t-r}e_{t-s}\Big(\frac{t/T-u}{h_e}\Big)^{i}$ is the $(r, s)$-th element of $\tilde{\mathbf{S}}_{T,i}(u)$ and
\begin{align*}
  \hat{s}_1(u, r, s) & = \frac{1}{T}\sum_{t=p+1}^TK_{h_e}(t/T-u)\Big[g\Big(\frac{t-r}{T}\Big)-\hat{g}\Big(\frac{t-r}{T}\Big)\Big]
                         \Big[g\Big(\frac{t-s}{T}\Big)-\hat{g}\Big(\frac{t-s}{T}\Big)\Big]\Big(\frac{t/T-u}{h_e}\Big)^{i}, \\
  \hat{s}_2(u, r, s) & = \frac{1}{T}\sum_{t=p+1}^TK_{h_e}(t/T-u)e_{t-r}\Big[g\Big(\frac{t-s}{T}\Big)-\hat{g}\Big(\frac{t-s}{T}\Big)\Big]\Big(\frac{t/T-u}{h_e}\Big)^{i}, \\
  \hat{s}_3(u, r, s) & = \frac{1}{T}\sum_{t=p+1}^TK_{h_e}(t/T-u)e_{t-s}\Big[g\Big(\frac{t-r}{T}\Big)-\hat{g}\Big(\frac{t-r}{T}\Big)\Big]\Big(\frac{t/T-u}{h_e}\Big)^{i}.
\end{align*}
Similarly, the $r$-th element of $\hat{\mathbf{t}}_{T, i}(u)$ is $
\hat{t}_{T, i}(u, r) = \tilde{t}_{T, i}(u, r)+\hat{t}_1(u,r)+\hat{t}_2(u,r)+\hat{t}_3(u,r)$,
where $\tilde{t}_{T, i}(u, r) = \frac{1}{T}\sum_{t=p+1}^TK_{h_e}(t/T-u)e_{t}e_{t-r}\Big(\frac{t/T-u}{h_e}\Big)^{i}$ is the $r$-th element of $\hat{\mathbf{t}}_{T, i}(u)$ and
\begin{align*}
  \hat{t}_1(u,r)  & = \frac{1}{T}\sum_{t=p+1}^TK_{h_e}(t/T-u)\Big[g\Big(\frac{t}{T}\Big)-\hat{g}\Big(\frac{t}{T}\Big)\Big]
                        \Big[g\Big(\frac{t-r}{T}\Big)-\hat{g}\Big(\frac{t-r}{T}\Big)\Big]\Big(\frac{t/T-u}{h_e}\Big)^{i}, \\
  \hat{t}_2(u,r)  & = \frac{1}{T}\sum_{t=p+1}^TK_{h_e}(t/T-u)e_{t}\Big[g\Big(\frac{t-r}{T}\Big)-\hat{g}\Big(\frac{t-r}{T}\Big)\Big]\Big(\frac{t/T-u}{h_e}\Big)^{i}, \\
  \hat{t}_3(u,r)  & = \frac{1}{T}\sum_{t=p+1}^TK_{h_e}(t/T-u)e_{t-r}\Big[g\Big(\frac{t}{T}\Big)-\hat{g}\Big(\frac{t}{T}\Big)\Big]\Big(\frac{t/T-u}{h_e}\Big)^{i}.
\end{align*}

With the decomposition of $\hat{S}_{T,i}(u, r, s)$ and $\hat{t}_{T,i}(u, r)$, \eqref{eqn:phi-convegence-in-probability} is immediately obtained from the fact that $\hat{\boldsymbol\alpha}(u)-\tilde{\boldsymbol\alpha}(u)=\hat{\mathbf{S}}_T^{-1}(\hat{\mathbf{t}}_T-\tilde{\mathbf{t}}_T)+
 \hat{\mathbf{S}}_T^{-1}(\tilde{\mathbf{S}}_T-\hat{\mathbf{S}}_T)\tilde{\mathbf{S}}_T^{-1}\tilde{\mathbf{t}}_T$ and the two statements below:
\begin{enumerate}[(i)]
  \item $\hat{s}_1(u, r, s)+\hat{s}_2(u, r, s)+\hat{s}_3(u, r, s)=o_p((Th_e)^{-\frac{1}{2}}),~~~~ i=0, 1, 2;$
  \item $\hat{t}_1(u, r)+\hat{t}_2(u, r)+\hat{t}_3(u, r)=o_p((Th_e)^{-\frac{1}{2}}), ~~~~i=0,1.$
\end{enumerate}
Since the proof for (i) is similar as (ii), here we only demonstrate (ii).

Note that $\frac{t/T-u}{h_e}\leq C_1$ on the compact support of $K_{h_e}(t/T-u)$, thus we only present the proof for the case $i=0$.

Firstly,
\begin{align*}
  \vert\hat{t}_1(u, r)\vert &= \frac{1}{T}\sum_{t=p+1}^TK_{h_e}(t/T-u)\Big\vert g\Big(\frac{t}{T}\Big)-\hat{g}\Big(\frac{t}{T}\Big)\Big\vert\cdot
                        \Big\vert g\Big(\frac{t-r}{T}\Big)-\hat{g}\Big(\frac{t-r}{T}\Big)\Big\vert \\
                      &= \frac{1}{Th}\sum_{t=p+1}^TK\Big(\frac{t/T-u}{h_e}\Big)\Big\vert g\Big(\frac{t}{T}\Big)-\hat{g}\Big(\frac{t}{T}\Big)\Big\vert\cdot
                        \Big\vert g\Big(\frac{t-r}{T}\Big)-\hat{g}\Big(\frac{t-r}{T}\Big)\Big\vert \\
                      &\leq \frac{C}{Th}\sum_{t=p+1}^T\Big\vert g\Big(\frac{t}{T}\Big)-\hat{g}\Big(\frac{t}{T}\Big)\Big\vert\cdot
                        \Big\vert g\Big(\frac{t-r}{T}\Big)-\hat{g}\Big(\frac{t-r}{T}\Big)\Big\vert.
\end{align*}
By Theorem \ref{thm:trend-convergence-rate} and condition (C9),
\begin{align*}
  \vert\hat{t}_1(u, r)\vert &\leq \frac{C}{h_e}\cdot\Big(\max_{1\leq t\leq T}\Big\vert g\Big(\frac{t}{T}\Big)-\hat{g}\Big(\frac{t}{T}\Big)\Big\vert\Big)^2 = \frac{C}{h_e}\cdot O_p\Big(\frac{\log T}{Th^2}+\frac{1}{T^{2r}h^2}+h^4\Big) = o_p((Th_e)^{-\frac{1}{2}}).
\end{align*}

Next we prove that $\hat{t}_3(u, r)=o_p((Th_e)^{-\frac{1}{2}})$.
\begin{align*}
\hat{t}_3(u, r) &= \frac{1}{T}\sum_{t=p+1}^TK_{h_e}(t/T-u)(e_{t-r}-e_{t-r}(u))\Big[g\Big(\frac{t}{T}\Big)-\hat{g}\Big(\frac{t}{T}\Big)\Big]\\
  &~~~~~+ \frac{1}{T}\sum_{t=p+1}^TK_{h_e}(t/T-u)e_{t-r}(u)\Big[g\Big(\frac{t}{T}\Big)-\hat{g}\Big(\frac{t}{T}\Big)\Big] \\
  &\stackrel{\vartriangle}{=}M_1+M_2.
\end{align*}
By (C1), it's easy to show that
\begin{align*}
M_1 &= \frac{1}{Th_e}\sum_{t=p+1}^TK(\frac{t/T-u}{h_e})(e_{t-r}-e_{t-r}(u))\Big[g\Big(\frac{t}{T}\Big)-\hat{g}\Big(\frac{t}{T}\Big)\Big] \\
    &\leq \frac{1}{Th_e}\sum_{|t/T-u|\leq C_1{h_e}} C\cdot O_p(\Big\vert\frac{t}{T}-u\Big\vert+\frac{1}{T}\Big)\cdot O_p\Big(\sqrt{\frac{\log{T}}{Th^2}}+\frac{1}{T^rh}+h^2\Big) \\
    &\leq \frac{(2Th_e+1)C}{Th_e}\cdot O_p(h_e)\cdot O_p\Big(\sqrt{\frac{\log{T}}{Th^2}}+h^2\Big) \\
    &= O_p\Big(h_e\sqrt{\frac{\log{T}}{Th^2}}+h_eh^2\Big) =o_p((Th_e)^{-\frac{1}{2}}).
\end{align*}
On the other hand, notice that $E(M_2)=0$, we only need to show $E(Th_e\cdot M_2^2)\to0$ for $M_2=o_p((Th_e)^{-\frac{1}{2}})$. We have
\begin{align*}
  E(Th_e\cdot M_2^2) &= \frac{1}{Th_e}E\Big\{\sum_{t=p+1}^TK(\frac{t/T-u}{h_e})e_{t-r}(u)\Big[g\Big(\frac{t}{T}\Big)-\hat{g}\Big(\frac{t}{T}\Big)\Big]\Big\}^2 \\
   &\leq \frac{C}{Th_e}\cdot (\sup_{x\in[0, 1]}\vert g(x)-\hat{g}(x)\vert)^2\cdot E\Big(\sum_{|t/T-u|\leq h}e_{t-r}(u)\Big)^2 \\
   &\leq \frac{(2Th_e+1)C}{Th_e}\cdot O_p\Big(\frac{\log{T}}{Th^2}+h^4\Big)\cdot \sum_{k=0}^\infty\vert\gamma_k(u)\vert ~~\longrightarrow 0,
\end{align*}
Thus $\hat{t}_{r3}(u)=o_p((Th_e)^{-\frac{1}{2}}).$ Similarly it holds that $\hat{t}_{r2}(u)=o_p((Th_e)^{-\frac{1}{2}})$. Then argument (ii) holds. The proof is completed.
\end{proof}

Proposition \ref{prop:phi-asymptotics} and Theorem \ref{thm:alpha-asymptotic-normality} show that the estimator $\hat{\boldsymbol{\phi}}(u)$  has the same convergence rate and the asymptotic distribution as the estimator when $e_t$ is observable as shown in \cite{kim2001nonparametric}.  Moreover, for the case that the order $p$ of error series is unknown, \cite{liyouJASM} proposed the estimate of $\mathcal{S}_1$ and $\mathcal{S}_2$, viz., $\hat{\mathcal{S}}_{1\hat{\lambda}}$ and $\hat{\mathcal{S}}_{2\hat{\gamma}}$ respectively, which are used to identify the coefficient functions and the constant coefficients. Theorem \ref{thm:BIC-consistency} shows that $\hat{\mathcal{S}}_{1\hat{\lambda}}$ and $\hat{\mathcal{S}}_{2\hat{\gamma}}$ can identify the true model consistently. To prove it, we first present some lemmas.

\begin{lemma}\label{lemma:uniform-consistency-x-unavailable}
Assume that conditions (C1)-(C11) are satisfied. $\Gamma(u)$ defined in \eqref{eqn:Gamma} is nonsingular for all $u\in(0, 1)$ and has uniformly bounded second derivatives. When $(h_e\lambda)/\sqrt{Th_e\log T}$ $\to 0$, and $\gamma/\sqrt{Th_e\log(T)}\to 0$ as $T\to\infty$, we have
\begin{align*}
  \sup_u\Vert\hat{\boldsymbol{\phi}}_\lambda(u)-\boldsymbol{\phi}^*(u)\Vert&=O_p(c_T),  \\
  \sup_u\Vert h_e(\hat{\boldsymbol{\phi}}^{'}_\gamma(u)-\boldsymbol{\phi}^{*'}(u))\Vert&=O_p(c_T),
\end{align*}
where $c_T=\big(\log(Th_e)/Th_e\big)^{1/2}$ and $\Vert\cdot\Vert$ is the $L_2$ norm.
\end{lemma}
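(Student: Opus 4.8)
The plan is to run the standard ``basic inequality'' for penalized least squares, but uniformly in $u$. Write $\boldsymbol{\alpha}^*(u)=(\boldsymbol{\phi}^*(u)^\top,h_e\boldsymbol{\phi}^{*\prime}(u)^\top)^\top$ for the true parameter, let $\hat{\boldsymbol{\alpha}}_{\lambda,\gamma}(u)$ denote the ULASSO minimizer of $Q_{\lambda,\gamma}(\cdot\,|\,\hat e,u)$, and put $\mathbf{d}(u)=\hat{\boldsymbol{\alpha}}_{\lambda,\gamma}(u)-\boldsymbol{\alpha}^*(u)$. Since $\hat{\boldsymbol{\alpha}}_{\lambda,\gamma}(u)=(\hat{\boldsymbol{\phi}}_\lambda(u)^\top,h_e\hat{\boldsymbol{\phi}}^{\prime}_\gamma(u)^\top)^\top$, both displayed bounds follow at once from $\sup_u\Vert\mathbf{d}(u)\Vert=O_p(c_T)$ by reading off the first $p$ and the last $p$ coordinates. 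Using that $\hat{\boldsymbol{\alpha}}_{\lambda,\gamma}(u)$ minimizes the convex function $Q_{\lambda,\gamma}(\cdot\,|\,\hat e,u)$ and expanding the quadratic loss $L(\cdot\,|\,\hat e,u)$ about $\boldsymbol{\alpha}^*(u)$, one obtains after dividing by $T$
\begin{equation*}
\mathbf{d}(u)^\top\hat{\mathbf{S}}_T(u)\,\mathbf{d}(u)\ \le\ 2\,\mathbf{d}(u)^\top\hat{\mathbf{t}}^*_T(u)\ +\ \frac{\lambda}{T}\sum_{k=1}^p\frac{|\phi^*_k(u)|-|\hat\phi_{\lambda,k}(u)|}{w_k}\ +\ \frac{\gamma}{T}\sum_{k=1}^p\frac{|\phi^{*\prime}_k(u)|-|\hat\phi^{\prime}_{\gamma,k}(u)|}{w'_k},
\end{equation*}
with $\hat{\mathbf{S}}_T(u)=T^{-1}\hat{\mathbf{Z}}_u^\top\mathbf{W}_e\hat{\mathbf{Z}}_u$ as in \eqref{eqn:S-t} and the ``score'' $\hat{\mathbf{t}}^*_T(u)=T^{-1}\hat{\mathbf{Z}}_u^\top\mathbf{W}_e(\hat{\mathbf{e}}_T-\hat{\mathbf{Z}}_u\boldsymbol{\alpha}^*(u))$. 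The proof then reduces to three uniform-in-$u$ statements: (a) $\inf_u\lambda_{\min}(\hat{\mathbf{S}}_T(u))\ge\kappa>0$ with probability tending to one; (b) $\sup_u\Vert\hat{\mathbf{t}}^*_T(u)\Vert=O_p(c_T)$; and (c) the two penalty sums are $\le o_p(c_T)\,\Vert\mathbf{d}(u)\Vert$ uniformly in $u$. Granting these, $\mathbf{d}(u)^\top\hat{\mathbf{S}}_T(u)\mathbf{d}(u)\ge\kappa\Vert\mathbf{d}(u)\Vert^2$ turns the display into $\kappa\Vert\mathbf{d}(u)\Vert\le 2\Vert\hat{\mathbf{t}}^*_T(u)\Vert+o_p(c_T)$, whence $\sup_u\Vert\mathbf{d}(u)\Vert=O_p(c_T)$ and the lemma follows.

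For (a) I would replace $\hat e_t$ by $e_t$ inside $\hat{\mathbf{S}}_T(u)$, which costs $o_p(1)$ uniformly in $u$ by exactly the bounds for $\hat s_1,\hat s_2,\hat s_3$ derived in the proof of Theorem~\ref{thm:alpha-asymptotic-normality} (those bounds are driven by $\sup_{u,x}|\hat g(u,x)-g(u,x)|$ and are therefore uniform), and then invoke a uniform law of large numbers for $\alpha$-mixing kernel-weighted sums (the one-dimensional analogue of Lemma~\ref{lem:convergence-rate}) to see that $\hat{\mathbf{S}}_T(u)$ converges uniformly in $u$ to a uniformly positive-definite limit, namely the Kronecker product of the (positive-definite) kernel-moment matrix with $\boldsymbol{\Gamma}(u)$; the nonsingularity hypothesis on $\boldsymbol{\Gamma}(u)$ together with (C8) gives $\inf_u\lambda_{\min}(\boldsymbol{\Gamma}(u))>0$. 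For (b), on the support of $K_{h_e}(t/T-u)$ I would expand
\begin{equation*}
\hat e_t-\sum_{k=1}^p\big(\phi^*_k(u)+\phi^{*\prime}_k(u)(t/T-u)\big)\hat e_{t-k}=\epsilon_t+\tfrac12\sum_{k=1}^p\phi^{*\prime\prime}_k(\xi_{t,u})(t/T-u)^2e_{t-k}+r_t(u),
\end{equation*}
where $r_t(u)$ gathers all terms involving $\hat g-g$ and obeys $\sup_{t,u}|r_t(u)|=O_p(\sup_{u,x}|\hat g(u,x)-g(u,x)|)$; inserting this into $\hat{\mathbf{t}}^*_T(u)$, the $\epsilon_t$-part is $O_p(\sqrt{\log T/(Th_e)})=O_p(c_T)$ uniformly in $u$ by a maximal inequality of the type used for Lemma~\ref{lem:convergence-rate}, exploiting $E(\epsilon_t\,|\,\mathcal{F}_{t-1})=0$ and conditions (C6),(C10); the quadratic term is $O_p(h_e^2)=o_p(c_T)$ since $(t/T-u)^2\le C_1^2h_e^2$ there; and the $r_t(u)$-terms are $o_p(c_T)$ uniformly in $u$ because (after the $\hat e\to e$ replacement in the regressor $\hat{\mathbf{Z}}_u$) they reproduce precisely the quantities $\hat t_1,\hat t_2,\hat t_3,\hat s_1,\hat s_2,\hat s_3$ already shown to be $o_p((Th_e)^{-1/2})$ in the proof of Theorem~\ref{thm:alpha-asymptotic-normality}. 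For (c), note that $\phi^*_k\equiv0$ for $k\notin\mathcal{S}_1$ forces $|\phi^*_k(u)|-|\hat\phi_{\lambda,k}(u)|\le0$, and $\phi^{*\prime}_k\equiv0$ for $k\notin\mathcal{S}_2$ forces $|\phi^{*\prime}_k(u)|-|\hat\phi^{\prime}_{\gamma,k}(u)|\le0$, so those indices only lower the right-hand side; for $k\in\mathcal{S}_1$ (resp.\ $k\in\mathcal{S}_2$) the reverse triangle inequality bounds the numerator by $\Vert\mathbf{d}(u)\Vert$ (resp.\ $h_e^{-1}\Vert\mathbf{d}(u)\Vert$), while the uniform adaptive weights satisfy $w_k^2=T^{-1}\sum_t\tilde\phi^2_k(t/T)\to\int_0^1\phi^{*2}_k>0$ and $w_k^{\prime2}\to\int_0^1\phi^{*\prime2}_k>0$ for those indices (by the uniform consistency of the preliminary estimator), so $\min_{k\in\mathcal{S}_1}w_k^{-1}$ and $\min_{k\in\mathcal{S}_2}w_k^{\prime-1}$ are $O_p(1)$; the hypotheses $h_e\lambda/\sqrt{Th_e\log T}\to0$ and $\gamma/\sqrt{Th_e\log T}\to0$ are exactly $\lambda/T=o(c_T)$ and $\gamma/(Th_e)=o(c_T)$, which closes (c).

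The only genuinely delicate point is the uniformity in (b): one must verify that the steps in the proof of Theorem~\ref{thm:alpha-asymptotic-normality} controlling the $\hat g-g$ cross terms --- in particular the bound on the mean-zero piece ``$M_2$'', which there used Markov's inequality at a fixed $u$ --- upgrade to hold uniformly in $u$, e.g.\ via a second-moment or maximal bound over $u$ rather than a pointwise one; the $\hat t_1$-type term is handled uniformly by $\sup_{u,x}|\hat g-g|$ and condition (C9) just as in Theorem~\ref{thm:alpha-asymptotic-normality}. Boundary effects at $u\in\{0,1\}$ are dealt with in the usual way, by working on a compact subinterval of $(0,1)$ or by reading the hypothesis on $\boldsymbol{\Gamma}(u)$ as a uniform lower bound on $\lambda_{\min}(\boldsymbol{\Gamma}(u))$. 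Once these uniform rates are in place, the basic inequality in the first paragraph delivers the result immediately.
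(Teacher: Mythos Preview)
Your proposal is correct and follows essentially the same route as the paper. The paper packages the argument as a Fan--Li localization (show $Q(\boldsymbol{\alpha}^*(u)+c_T\mathbf{r})>Q(\boldsymbol{\alpha}^*(u))$ on the sphere $\Vert\mathbf{r}\Vert=C$ uniformly in $u$, hence the convex minimizer lies in the ball), whereas you use the equivalent basic inequality directly; but the three ingredients you isolate --- the uniform eigenvalue lower bound on $\hat{\mathbf{S}}_T(u)$ via \eqref{eqn:S-tilde-convergence} after the $\hat e\to e$ replacement, the uniform $O_p(c_T)$ rate on the score $\hat{\mathbf{t}}^*_T(u)$ (the paper obtains this in one line by citing Lemma~6.1 of \cite{fan2008nonlinear}, where you give a more explicit decomposition), and the $o_p(c_T)\Vert\mathbf{d}(u)\Vert$ control on the penalty via the reverse triangle inequality together with $\min_{k\in\mathcal{S}_1}w_k,\min_{k\in\mathcal{S}_2}w'_k$ bounded away from zero --- are exactly the paper's, and your arithmetic reducing the hypotheses on $\lambda,\gamma$ to $\lambda/T=o(c_T)$ and $\gamma/(Th_e)=o(c_T)$ matches the paper's \eqref{eqn:third-term-in-R1}.
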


\begin{proof}
The result of Lemma \ref{lemma:uniform-consistency-x-unavailable} can be directly derived from
\begin{equation}\label{eqn:ULASSO-uniform-consistency}
    \sup_u\Vert\hat{\boldsymbol{\alpha}}_{\lambda, \gamma}(u)-\boldsymbol{\alpha}^*(u)\Vert=O_p(c_T).
\end{equation}
To prove \eqref{eqn:ULASSO-uniform-consistency}, we firstly define a ball $B_C=\{\boldsymbol{\alpha}(u): \boldsymbol{\alpha}(u)= \boldsymbol{\alpha}^*(u)+c_T\mathbf{r},\Vert \mathbf{r}\Vert\leq C\}$. By \cite{fan2001variable}, we only need to show that for any $\epsilon>0$, there exists $C>0$ which doesn't depend on $u$ such that
\begin{equation}\label{eqn:fan&li-sufficient-condition}
  P\Big(\inf_{\Vert r\Vert=C}Q(\boldsymbol{\alpha}^*(u)+c_T\mathbf{r}\big|\hat{e}, u)>Q(\boldsymbol{\alpha}^*(u)\big|\hat{e}, u)\Big)\geq 1-\epsilon.
\end{equation}

Let $\Vert \mathbf{r}\Vert=C$. For any $u\in(0, 1)$, by definition of $Q(\boldsymbol{\alpha}(u)\big|\hat{X}, u)$, we have
\begin{align*}
  R_1 &= \frac{h_e}{\log(1/h_e)}\Big(Q(\boldsymbol{\alpha}^*(u)+c_T\mathbf{r}\big|\hat{e}, u)-Q(\boldsymbol{\alpha}^*(u)\big|\hat{e}, u)\Big) \\
      &= \frac{h_e}{\log(1/h_e)}\Big\{\sum_{i=1}^T\Big(\hat{\mathbf{e}}_T-\hat{\mathbf{Z}}_u(\boldsymbol{\alpha}^*(u)+c_T\mathbf{r})\Big)^\top
        \mathbf{W}_e\Big(\hat{\mathbf{e}}_T-\hat{\mathbf{Z}}_u(\boldsymbol{\alpha}^*(u)+c_T\mathbf{r})\Big) \\
      &~~~~-\sum_{i=1}^T\Big(\hat{\mathbf{e}}_T-\hat{\mathbf{Z}}_u\boldsymbol{\alpha}^*(u)\Big)^\top
        \mathbf{W}_e\Big(\hat{\mathbf{e}}_T-\hat{\mathbf{Z}}_u\boldsymbol{\alpha}^*(u)\Big)\Big\} \\
      &~~~~+\sum_{k=1}^p\frac{h_e\lambda}{\log(1/h_e)w_k}\Big(\Big|\phi^*_k(u)+c_Tr_k\Big|-|\phi^*_k(u)|\Big) \\
      &~~~~+\sum_{k=1}^p\frac{\gamma}{\log(1/h_e)w'_k}\Big(\Big|h_e\phi^{*'}_k(u)+c_Tr_{p+k}\Big|-|\phi^{*'}_k(u)|\Big).
\end{align*}
By some simple calculation, we have
\begin{align*}
  R_1 &\geq \frac{\mathbf{r}^\top\hat{\mathbf{Z}}_u^\top\mathbf{W}_e\hat{\mathbf{Z}}_u\mathbf{r}}{T}
        -2\frac{\mathbf{r}^\top}{\log(1/h_e)}\sqrt{\frac{h_e}{T}}\hat{\mathbf{Z}}_u^\top\mathbf{W}_e(\hat{\mathbf{e}}_T-\hat{\mathbf{Z}}_u\boldsymbol{\alpha}^*(u)) \\
      &~~~~+\sum_{k\in\mathcal{S}_1}\frac{h_e\lambda}{\log(1/h_e)w_k}\Big(\Big|\phi^*_k(u)+c_Tr_k\Big|-|\phi^*_k(u)|\Big) \\
      &~~~~+\sum_{k\in\mathcal{S}_2}\frac{\gamma}{\log(1/h_e)w'_k}\Big(\Big|h_e\phi^{*'}_k(u)+c_Tr_{p+k}\Big|-|\phi^{*'}_k(u)|\Big).
\end{align*}
Note that by Theorem \ref{thm:trend-convergence-rate}, it holds that $\sup_{t}\vert \hat{e}_t-e_t\vert=\sup_{t}\vert\hat{g}(t/T,X_t)-g(t/T,X_t)\vert=O_p\Big(h_e\sqrt{\frac{\log{T}}{Th^2}}+h_eh^2\Big)$, which follows that
\begin{align}
  \frac{\mathbf{r}^\top\hat{\mathbf{Z}}_u^\top\mathbf{W}_e\hat{\mathbf{Z}}_u\mathbf{r}}{T}&=\frac{\mathbf{r}^\top{\mathbf{Z}}_u^\top\mathbf{W}_e{\mathbf{Z}}_u\mathbf{r}}{T}+o_p(1), \label{eqn:1st-term-in-R1}\\
  \hat{\mathbf{Z}}_u^\top\mathbf{W}_e(\hat{\mathbf{e}}_T-\hat{\mathbf{Z}}_u\boldsymbol{\alpha}^*(u))&=\mathbf{Z}_u^\top\mathbf{W}_e
        (\mathbf{e}_T-\mathbf{Z}_u\boldsymbol{\alpha}^*(u))+o_p(1). \label{eqn:2nd-term-in-R1}
\end{align}
Then we have
\begin{align*}
R_1 &\geq
        \frac{\mathbf{r}^\top{\mathbf{Z}}_u^\top\mathbf{W}_e{\mathbf{Z}}_u\mathbf{r}}{T}
        -2\frac{\mathbf{r}^\top}{\log(1/h_e)}\sqrt{\frac{h_e}{T}}\mathbf{Z}_u^\top\mathbf{W}_e
        (\mathbf{e}_T-\mathbf{Z}_u\boldsymbol{\alpha}^*(u)) \\
      &~~~~+\sum_{k\in\mathcal{S}_1}\frac{h_e\lambda}{\log(1/h_e)w_k}\Big(\Big|\phi^*_k(u)+c_Tr_k\Big|-|\phi^*_k(u)|\Big) \\
      &~~~~+\sum_{k\in\mathcal{S}_2}\frac{\gamma}{\log(1/h_e)w'_k}\Big(\Big|h_e\phi^{*'}_k(u)+c_Tr_{p+k}\Big|-|\phi^{*'}_k(u)|\Big)+o_p(1), \\
    &
\end{align*}
Let $l_0^{\min}=\l^{\min}\big(\boldsymbol\Lambda(u)\big)$ and $l_T^{\min}=\inf_{u\in(0, 1)}\l^{\min}\big(\frac{{\mathbf{Z}}_u^\top\mathbf{W}_e{\mathbf{Z}}_u}{T}\big)$, where $\l^{\min}(\mathbf{A})$ denotes the minimal eigenvalue of matrix $\mathbf{A}$, and
\begin{equation*}
  \boldsymbol\Lambda(u) = \begin{pmatrix}
     \boldsymbol\Gamma(u) & \mathbf{O}_p \\
    \mathbf{O}_p & \mu_2\boldsymbol\Gamma(u)
  \end{pmatrix}.
\end{equation*}
Then
\begin{align*}
  R_1 &\geq \Vert r\Vert^2l_T^{\min}-2\Vert r\Vert\cdot\frac{1}{\sqrt{\log(1/h_e)}}\sup_u\Bigg\Vert\sqrt{\frac{h_e}{T}}\mathbf{Z}_u^\top\mathbf{W}_e\big(\mathbf{e}_T-\mathbf{Z}_u\boldsymbol{\alpha^*}(u)\big)\Bigg\Vert\\
  &~~~~ - \frac{h_e\lambda}{\sqrt{Th_e\log(1/h_e)}}\cdot\frac{\sqrt{p}}{\min_{k\in\mathcal{S}_1}w_k}\Vert r\Vert - \frac{\gamma}{\sqrt{Th_e\log(1/h_e)}}\cdot\frac{\sqrt{p}}{\min_{k\in\mathcal{S}_2}w'_k}\Vert r\Vert+o_p(1) \\
  &= l_T^{\min}\cdot C^2-\Bigg\{\frac{2}{\sqrt{\log(1/h_e)}}\sup_u\Bigg\Vert\sqrt{\frac{h_e}{T}}\mathbf{Z}_u^\top\mathbf{W}_e\big(\mathbf{e}_T-\mathbf{Z}_u\boldsymbol{\alpha^*}(u)\big)\Bigg\Vert\\
  &~~~~ - \frac{h\lambda}{\sqrt{Th_e\log(1/h_e)}}\cdot\frac{\sqrt{2p}}{\min_{k\in\mathcal{S}_1}w_k} - \frac{\gamma}{\sqrt{Th_e\log(1/h_e)}}\cdot\frac{\sqrt{2p}}{\min_{k\in\mathcal{S}_2}w'_k}\Bigg\}\cdot C+o_p(1) \\
  &\equiv l_T^{\min}\cdot C^2-A\cdot C+o_p(1).
\end{align*}
where
\begin{align*}
  A &= \frac{2}{\sqrt{\log(1/h_e)}}\sup_u\Bigg\Vert\sqrt{\frac{h_e}{T}}\mathbf{Z}_u^\top\mathbf{W}_e\big(\mathbf{e}_T-\mathbf{Z}_u\boldsymbol{\alpha^*}(u)\big)\Bigg\Vert- \frac{h\lambda}{\sqrt{Th_e\log(1/h_e)}}\cdot\frac{\sqrt{2p}}{\min_{k\in\mathcal{S}_1}w_k} \\
  &~~~~- \frac{\gamma}{\sqrt{Th_e\log(1/h_e)}}\cdot\frac{\sqrt{2p}}{\min_{k\in\mathcal{S}_2}w'_k}.
\end{align*}
Similar with \eqref{eqn:ztwz-limit}, we can show that
\begin{equation}\label{eqn:S-tilde-convergence}
  \frac{\mathbf{Z}_u^\top\mathbf{W}_e\mathbf{Z}_u}{T}~\stackrel{p}{\longrightarrow}~\boldsymbol\Lambda(u).
\end{equation}
Note that $\boldsymbol{\Gamma}(u)$ is nonsingular, it holds that $l_0^{\min}>0$. Hence, by the definition of $l_T^{\min}$ and $l_0^{\min}$ and  \eqref{eqn:S-tilde-convergence}, $l_T^{\min}~\stackrel{p}{\longrightarrow}~l_0^{\min}>0$.

On the other side, it follows from condition (C1) and Lemma 6.1 of \cite{fan2008nonlinear} that
\begin{equation}\label{eqn:second-term-in-R1}
  \sup_u\Bigg\Vert\sqrt{\frac{h_e}{T}}\mathbf{Z}_u^\top\mathbf{W}_e\big(\mathbf{e}_T-\mathbf{Z}_u\boldsymbol{\alpha^*}(u)\big)\Bigg\Vert=O_p(\sqrt{\log(1/h_e)}).
\end{equation}
Note that $\min_{k\in\mathcal{S}_1}w_k$ and $\min_{k\in\mathcal{S}_2}w'_k$ converge in probability to a positive constant respectively, which implies that
\begin{equation}\label{eqn:third-term-in-R1}
  \frac{h_e\lambda}{\sqrt{Th_e\log(1/h_e)}}\cdot\frac{\sqrt{2p}}{\min_{k\in\mathcal{S}_1}w_k}=o_p(1),~~~ \frac{\gamma}{\sqrt{Th_e\log(1/h_e)}}\cdot\frac{\sqrt{2p}}{\min_{k\in\mathcal{S}_2}w'_k}=o_p(1).
\end{equation}
So it follows from \eqref{eqn:second-term-in-R1} and \eqref{eqn:third-term-in-R1} that $A=O_p(1)$. Therefore, as long as the constant $C$ is large enough, the value of $l_T^{\min}\cdot C^2-A\cdot C$ is positive, which implies that $R_1>0$. Thus \eqref{eqn:fan&li-sufficient-condition} holds, which means that there exists a minimum in the ball $B_C$ for any $u\in(0, 1)$ with probability $1-\epsilon$. Therefore the minimizer $\hat{\boldsymbol{\alpha}}_{\lambda, \gamma}(u)$ of $Q(\boldsymbol{\alpha}(u)\big|\hat{e}, u)$ must satisfy that $\sup_u\Vert\hat{\boldsymbol{\alpha}}_{\lambda, \gamma}(u)-\boldsymbol{\alpha}^*(u)\Vert=O_p(c_T)$. The proof is completed.

\end{proof}

\begin{lemma}\label{lemma:uniform-sparsity-consistency-x-unavailable}
Assume that conditions (C1)-(C11) are satisfied. $\Gamma(u)$ defined in \eqref{eqn:Gamma} is nonsingular for all $u\in(0, 1)$ and has uniformly bounded second derivatives. When $h_e\lambda/\sqrt{Th_e}\to 0$,  $\gamma/\sqrt{Th_e}\to 0$, $T^{-1/5}(\log(T))^{-1/2}\lambda\to\infty$, and $T^{-1/5}(\log(T))^{-1/2}\gamma\to\infty$ as $T\to\infty$, we have
\begin{enumerate}[(i)]
  \item $P\Big(\sup_u\vert \hat{\boldsymbol{\phi}}_{\lambda, \mathcal{S}_1^c}(u)\vert=0, \sup_u\vert \hat{\boldsymbol{\phi}}^{'}_{\lambda, \mathcal{S}_2^c}(u)\vert=0\Big)\to 1$;
  \item $  P(\hat{\mathcal{S}}_{1\lambda}=\mathcal{S}_1, \hat{\mathcal{S}}_{2\gamma}=\mathcal{S}_2)\to1$.
\end{enumerate}
\end{lemma}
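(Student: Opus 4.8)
The plan is to follow the standard two-ingredient route to selection consistency for adaptive-LASSO-type estimators: combine the uniform $\sqrt{Th_e}$-consistency already supplied by Lemma~\ref{lemma:uniform-consistency-x-unavailable} with the Karush--Kuhn--Tucker (subgradient) optimality conditions for the convex program \eqref{eqn:penalized-least-squares}. Statement (i) is the substantive part; statement (ii) then follows from (i) together with Lemma~\ref{lemma:uniform-consistency-x-unavailable} by a short continuity argument. I would carry out (i) in detail for the $\boldsymbol{\phi}$-components indexed by $\mathcal{S}_1^c$; the $h_e\boldsymbol{\phi}'$-components indexed by $\mathcal{S}_2^c$ are handled identically after the reparametrisation $\alpha_{p+k}(u)=h_e\phi'_k(u)$, which turns the penalty $\gamma|\phi'_k(u)|/w'_k$ into a weighted $\ell_1$ penalty on $\alpha_{p+k}(u)$ with weight $\gamma/(h_e w'_k)$.

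\emph{Part (i).} Fix $k\in\mathcal{S}_1^c$, so $\phi^*_k\equiv 0$. Since $L(\boldsymbol{\alpha}(u)\mid\hat e,u)=(\hat{\mathbf{e}}_T-\hat{\mathbf{Z}}_u\boldsymbol{\alpha}(u))^\top\mathbf{W}_e(\hat{\mathbf{e}}_T-\hat{\mathbf{Z}}_u\boldsymbol{\alpha}(u))$ is convex quadratic, the subgradient condition at the minimiser $\hat{\boldsymbol{\alpha}}_{\lambda,\gamma}(u)$ reads, on the event $\{\hat\phi_{\lambda,k}(u)\neq 0\}$,
\begin{equation*}
2\Big|\big[\hat{\mathbf{Z}}_u^\top\mathbf{W}_e\big(\hat{\mathbf{e}}_T-\hat{\mathbf{Z}}_u\hat{\boldsymbol{\alpha}}_{\lambda,\gamma}(u)\big)\big]_k\Big|=\frac{\lambda}{w_k}.
\end{equation*}
Decomposing $\hat{\mathbf{Z}}_u^\top\mathbf{W}_e(\hat{\mathbf{e}}_T-\hat{\mathbf{Z}}_u\hat{\boldsymbol{\alpha}}_{\lambda,\gamma})=\hat{\mathbf{Z}}_u^\top\mathbf{W}_e(\hat{\mathbf{e}}_T-\hat{\mathbf{Z}}_u\boldsymbol{\alpha}^*)-\hat{\mathbf{Z}}_u^\top\mathbf{W}_e\hat{\mathbf{Z}}_u(\hat{\boldsymbol{\alpha}}_{\lambda,\gamma}-\boldsymbol{\alpha}^*)$, I would bound the first term via \eqref{eqn:2nd-term-in-R1} together with \eqref{eqn:second-term-in-R1}, and the second via \eqref{eqn:1st-term-in-R1}, \eqref{eqn:S-tilde-convergence} and the consistency bound \eqref{eqn:ULASSO-uniform-consistency}; this gives, uniformly in $u\in(0,1)$,
\begin{equation*}
2\Big|\big[\hat{\mathbf{Z}}_u^\top\mathbf{W}_e\big(\hat{\mathbf{e}}_T-\hat{\mathbf{Z}}_u\hat{\boldsymbol{\alpha}}_{\lambda,\gamma}(u)\big)\big]_k\Big|=O_p\big(\sqrt{(T/h_e)\log T}\big)+O_p(T)\cdot O_p(c_T)=O_p\big(\sqrt{(T/h_e)\log T}\big).
\end{equation*}

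For the penalty side, for $k\in\mathcal{S}_1^c$ the adaptive weight $w_k=(T^{-1}\sum_t\tilde{\phi}_k^2(t/T))^{1/2}$ is an $L_2$-average of the errors of a bias-free kernel estimate of the zero function $\phi^*_k$, so $E\tilde{\phi}_k^2(t/T)=O((Th_e)^{-1})$ and hence $w_k=O_p((Th_e)^{-1/2})$ by Markov's inequality; thus $\lambda/w_k\geq c\,\lambda\sqrt{Th_e}$ on an event of probability at least $1-\epsilon$. Since
\begin{equation*}
\frac{\lambda\sqrt{Th_e}}{\sqrt{(T/h_e)\log T}}=\frac{\lambda h_e}{\sqrt{\log T}}\asymp\frac{\lambda}{T^{1/5}\sqrt{\log T}}\longrightarrow\infty
\end{equation*}
by the hypothesis $T^{-1/5}(\log T)^{-1/2}\lambda\to\infty$, the penalty side eventually exceeds the score side in modulus, so the displayed identity cannot hold; hence $P(\hat\phi_{\lambda,k}(u)\neq 0)\to 0$. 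Because the adaptive weights do not depend on $u$ and every bound above is uniform in $u$, this yields $P(\sup_u|\hat{\boldsymbol{\phi}}_{\lambda,\mathcal{S}_1^c}(u)|=0)\to 1$. The identical argument, now with effective penalty weight $\gamma/(h_e w'_k)$ and $w'_k=O_p((Th_e^3)^{-1/2})$ for $k\in\mathcal{S}_2^c$ (which again gives a lower bound of order $\gamma\sqrt{Th_e}$ and requires $T^{-1/5}(\log T)^{-1/2}\gamma\to\infty$), gives $P(\sup_u|\hat{\boldsymbol{\phi}}'_{\gamma,\mathcal{S}_2^c}(u)|=0)\to 1$, so (i) holds.

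\emph{Part (ii).} By (i), $\hat{\mathcal{S}}_{1\lambda}\subseteq\mathcal{S}_1$ and $\hat{\mathcal{S}}_{2\gamma}\subseteq\mathcal{S}_2$ with probability tending to one. For the reverse inclusions I would use Lemma~\ref{lemma:uniform-consistency-x-unavailable}: for $k\in\mathcal{S}_1$ the function $\phi^*_k$ is continuous and not identically zero, so $|\phi^*_k|\geq\delta>0$ on a subinterval of $(0,1)$, whence $\sum_t|\hat\phi_{\lambda,k}(t/T)|>0$, i.e.\ $k\in\hat{\mathcal{S}}_{1\lambda}$, with probability tending to one; and for $k\in\mathcal{S}_2$, since $c_T/h_e\to 0$ one obtains $T^{-1}\sum_t\hat\phi'_{\gamma,k}(t/T)\to\int_0^1\phi^{*'}_k(u)\,du\neq 0$ in probability, so $k\in\hat{\mathcal{S}}_{2\gamma}$ with probability tending to one. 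Combining the two directions gives $P(\hat{\mathcal{S}}_{1\lambda}=\mathcal{S}_1,\hat{\mathcal{S}}_{2\gamma}=\mathcal{S}_2)\to 1$. The crux of the proof is the rate bookkeeping in (i): the adaptive weight of a truly-zero component must be controlled at the sharp $L_2$ rate $(Th_e)^{-1/2}$ with no stray $\log$ factor (replacing it by $\sup_u|\tilde{\phi}_k(u)|$ would cost a $\sqrt{\log T}$ and destroy the match with the stated conditions on $\lambda$ and $\gamma$), and the passage from the infeasible score $\mathbf{Z}_u^\top\mathbf{W}_e(\mathbf{e}_T-\mathbf{Z}_u\boldsymbol{\alpha}^*)$ to the residual-based one, together with the Hessian$\,\times\,$estimation-error term, must be shown negligible uniformly in $u$; both are delivered by the uniform maximal inequalities already used in the proof of Lemma~\ref{lemma:uniform-consistency-x-unavailable}.
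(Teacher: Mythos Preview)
Your proposal is correct and follows essentially the same route the paper takes: the paper defers to Theorem~2 of \cite{wang2012parametric}, whose argument proceeds via the KKT subgradient conditions combined with the uniform consistency supplied by Lemma~\ref{lemma:uniform-consistency-x-unavailable}, exactly as you outline, with the passage from $e_t$ to $\hat e_t$ handled through \eqref{eqn:1st-term-in-R1}--\eqref{eqn:2nd-term-in-R1}. Your write-up is simply more explicit than the paper's one-line reference, and your closing remark about needing the sharp $L_2$ rate $(Th_e)^{-1/2}$ for the adaptive weights (rather than a sup-norm bound) correctly isolates why the stated growth conditions on $\lambda$ and $\gamma$ are the right ones.
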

\begin{proof}
 The proof for (i) is similar to  Theorem 2 in \cite{wang2012parametric}. One difference is that the equations contain $\hat{e}_t$ can be easily handled by using equation \eqref{eqn:1st-term-in-R1} and \eqref{eqn:2nd-term-in-R1}, another is that Theorem 1 of \cite{wang2012parametric} are replaced by Lemma \ref{lemma:uniform-consistency-x-unavailable} in our case.

 The consistency of $\hat{\mathcal{S}}_{1\lambda}$ and $\hat{\mathcal{S}}_{2\gamma}$ in (ii) is implied from (i).
\end{proof}

\begin{theorem}\label{thm:BIC-consistency}
Assume conditions (C1)-(C11) are satisfied. $\Gamma(u)$ defined in \eqref{eqn:Gamma} is nonsingular for all $u\in(0, 1)$ and has uniformly bounded second derivatives, then it holds that
\begin{equation*}
  P(\hat{\mathcal{S}}_{1\hat\lambda}=\mathcal{S}_1, \hat{\mathcal{S}}_{2\hat\gamma}=\mathcal{S}_2)\to 1, ~~~~~~\mathrm{as}~T\to\infty.
\end{equation*}
\end{theorem}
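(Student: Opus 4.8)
The plan is to run the standard BIC model-selection-consistency argument (as in \cite{wang2012parametric}), partitioning the tuning parameters according to the model they select and showing that only the true model can attain the minimal $BIC$ asymptotically. Write $\omega=(\lambda,\gamma)$, let $\mathcal{S}_\omega=(\hat{\mathcal{S}}_{1\lambda},\hat{\mathcal{S}}_{2\gamma})$ be the selected index sets, and let $\mathcal{S}_*=(\mathcal{S}_1,\mathcal{S}_2)$ be the truth. Since $p$ is fixed, $\omega\mapsto\mathcal{S}_\omega$ takes only finitely many values, so the search range splits into $\Omega_-$ (underfitted: $\mathcal{S}_\omega$ omits at least one genuinely nonzero coefficient function or constant), $\Omega_0$ (correct: $\mathcal{S}_\omega=\mathcal{S}_*$), and $\Omega_+$ (overfitted: $\mathcal{S}_\omega\supsetneq\mathcal{S}_*$). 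Fix a reference sequence $\omega_T=(\lambda_T,\gamma_T)$ lying in the common admissible range of Lemmas \ref{lemma:uniform-consistency-x-unavailable} and \ref{lemma:uniform-sparsity-consistency-x-unavailable}; by Lemma \ref{lemma:uniform-sparsity-consistency-x-unavailable}, $P(\omega_T\in\Omega_0)\to1$, and on this event $\widehat{df}_{\omega_T}=|\mathcal{S}_1|+|\mathcal{S}_2|$. It therefore suffices to prove that, with probability tending to $1$, $BIC_\omega>BIC_{\omega_T}$ simultaneously for all $\omega\in\Omega_-\cup\Omega_+$.

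The first step is to reduce the comparison to unpenalized local-linear sub-problems. Because $\hat{\boldsymbol\alpha}_{\lambda,\gamma}(u)$ is supported on $\mathcal{S}_\omega$ for every $u$, its value of $L(\cdot|e,u)$ is at least $\min_{\mathrm{supp}\subseteq\mathcal{S}_\omega}L(\boldsymbol\alpha(u)|e,u)$, hence $RSS_\omega\ge\widetilde{RSS}(\mathcal{S}_\omega)$, where $\widetilde{RSS}(\mathcal{S})=T^{-2}\sum_iL(\check{\boldsymbol\alpha}_{\mathcal S}(i/T)|e,i/T)$ and $\check{\boldsymbol\alpha}_{\mathcal S}$ is the unpenalized local-linear fit restricted to $\mathcal{S}$; note $\widetilde{RSS}(\mathcal{S})$ takes only finitely many values. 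A Taylor expansion of $L$ around $\boldsymbol\alpha^*(u)$, combined with the kernel-average limit $T^{-1}\sum_tK_{h_e}(t/T-u)\to1$, $E(\epsilon_t^2)=\sigma^2$, and the uniform machinery behind Proposition \ref{prop:phi-asymptotics} and Lemma \ref{lemma:uniform-consistency-x-unavailable}, yields two facts: (a) for $\mathcal{S}\not\supseteq\mathcal{S}_*$, $\widetilde{RSS}(\mathcal{S})\stackrel{p}{\longrightarrow}\sigma^2+\delta_{\mathcal S}$ with $\delta_{\mathcal S}$ a fixed positive constant, since dropping a genuinely nonzero component leaves a non-vanishing systematic term in the local residuals; (b) for $\mathcal{S}\supseteq\mathcal{S}_*$, $\widetilde{RSS}(\mathcal{S})=\widetilde{RSS}(\mathcal{S}_*)+O_p((Th_e)^{-1})$ uniformly, because each extra spurious direction contributes only a stochastically bounded term of order $(Th_e)^{-1}$ to the minimized localized weighted least squares, and $p$ is fixed. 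At the reference point, $0\le RSS_{\omega_T}-\widetilde{RSS}(\mathcal{S}_*)=O_p\big((\lambda_T^2+(\gamma_T/h_e)^2)/T^2\big)=o_p((Th_e)^{-1})$, where the last equality uses the admissible ranges of $\lambda_T,\gamma_T$ together with the bandwidth rate (C9); also $\widehat{df}_\omega\le2p$ for every $\omega$, so every penalty term $\widehat{df}_\omega\log(Th_e)/(Th_e)$ is $O(\log(Th_e)/(Th_e))=o(1)$.

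Combining these ingredients finishes the argument. For $\omega\in\Omega_-$, $RSS_\omega\ge\min_{\mathcal{S}\not\supseteq\mathcal{S}_*}\widetilde{RSS}(\mathcal{S})\stackrel{p}{\longrightarrow}\sigma^2+\delta_{\min}$ with $\delta_{\min}=\min_{\mathcal{S}\not\supseteq\mathcal{S}_*}\delta_{\mathcal S}>0$, whereas $RSS_{\omega_T}\stackrel{p}{\longrightarrow}\sigma^2$, so $\log RSS_\omega-\log RSS_{\omega_T}$ is bounded below by a positive constant with probability tending to $1$; this dominates the $o(1)$ difference of penalty terms, giving $BIC_\omega>BIC_{\omega_T}$ uniformly over $\Omega_-$. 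For $\omega\in\Omega_+$, $RSS_\omega\ge\widetilde{RSS}(\mathcal{S}_\omega)\ge\widetilde{RSS}(\mathcal{S}_*)-O_p((Th_e)^{-1})\ge RSS_{\omega_T}-O_p((Th_e)^{-1})$, so $\log RSS_\omega-\log RSS_{\omega_T}\ge-O_p((Th_e)^{-1})$, while $\widehat{df}_\omega-\widehat{df}_{\omega_T}\ge1$ on $\{\omega_T\in\Omega_0\}$; since $\log(Th_e)/(Th_e)$ is of strictly larger order than $(Th_e)^{-1}$, again $BIC_\omega>BIC_{\omega_T}$ uniformly over $\Omega_+$ with probability tending to $1$. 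Hence $P((\hat\lambda,\hat\gamma)\in\Omega_0)\to1$, that is, $P(\hat{\mathcal{S}}_{1\hat\lambda}=\mathcal{S}_1,\hat{\mathcal{S}}_{2\hat\gamma}=\mathcal{S}_2)\to1$.

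The step I expect to be the main obstacle is (b) together with the bound on the penalty-induced gap at $\omega_T$: showing, uniformly over the continuum of overfitted tuning parameters, that $RSS_\omega$ cannot undercut $RSS_{\omega_T}$ by more than $O_p((Th_e)^{-1})$. This needs the reduction to OLS sub-minimizations above (to remove the dependence on the shrinkage amount inside a fixed selected support), a careful quadratic expansion of the localized weighted-least-squares criterion to verify each spurious direction only buys $O_p((Th_e)^{-1})$, and control of the penalty bias at $\omega_T$ at the $o((Th_e)^{-1})$ level, where the explicit bandwidth rate of (C9) is essential. The remaining nuisance of substituting $\hat e_t$ for $e_t$ inside $L$ is handled exactly as in the proof of Theorem \ref{thm:alpha-asymptotic-normality}, via the uniform rate for $\hat g-g$ in Theorem \ref{thm:trend-convergence-rate}, and contributes only $o_p((Th_e)^{-1})$ to each $RSS$ term.
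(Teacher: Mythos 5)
Your proposal is correct and follows essentially the same route as the paper: the paper's proof is a sketch that fixes the reference sequence $\lambda_T=\gamma_T=T^{1/5}\log T$ from Lemma \ref{lemma:uniform-sparsity-consistency-x-unavailable}, partitions the tuning-parameter plane into underfitted, correctly fitted, and overfitted regions, and defers the RSS comparisons to Theorem 4 of \cite{wang2012parametric}. You have simply filled in the details of that cited argument (reduction to finitely many unpenalized sub-problems, the constant-order gap on the underfitted region, and the $O_p((Th_e)^{-1})$-versus-$\log(Th_e)/(Th_e)$ comparison on the overfitted region), and your rate bookkeeping at the reference point checks out under (C9).
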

\begin{proof} The proof is similar with the i.i.d. case in \cite{wang2012parametric}, we show the procedure briefly and omit the details. Denote $\hat{\alpha}_{\lambda, \gamma,k}(u)$ as the $k$th component of $\hat{\boldsymbol\alpha}_{\lambda, \gamma}(u)$. Let $\mathcal{S} = \{1, \cdots, p_2, p_2+1, \cdots, p_1\}$ and $\hat{\mathcal{S}}_{\lambda,\gamma} = \{k:\sum_{t=1}^T\vert\hat{\alpha}_{\lambda,\gamma,k}(t/T)\vert>0\}$. Then Theorem \ref{thm:BIC-consistency} is equivalent to $P(\hat{\mathcal{S}}_{\hat\lambda,\hat\gamma}=\mathcal{S})\to 1$. By (ii) of Lemma \ref{lemma:uniform-sparsity-consistency-x-unavailable}, we know that as $T\to\infty$, $\lambda_T=\gamma_T=T^{1/5}\log(T)$ satisfies that $P(\hat{\mathcal{S}}_{1\lambda_T}=\mathcal{S}_1, \hat{\mathcal{S}}_{2\gamma_T}=\mathcal{S}_2)\to 1$.

Then, we can divide $\mathbb{R}^2$ into three sets, i.e., $\mathbb{R}^2_{+}=\{(\lambda,\gamma): \mathcal{S}\subsetneq\mathcal{S}_{\lambda,\gamma}\}$, $\mathbb{R}^2_{-}=\{(\lambda,\gamma): \mathcal{S}\not\subset\mathcal{S}_{\lambda,\gamma}\}$, and $\mathbb{R}^2_{0}=\{(\lambda,\gamma): \mathcal{S}=\mathcal{S}_{\lambda,\gamma}\}$, corresponding to overfitted case, underfitted case, and correctly fitted case, respectively. The rest of the proof can be finished by mimicking the proof of Theorem 4 in \cite{wang2012parametric}, where Theorem 3 of in \cite{wang2012parametric} are replaced by Lemma \ref{lemma:uniform-sparsity-consistency-x-unavailable} in our case.
\end{proof}

In practice some data driven methods like grid search can be used to determine the tuning parameters $\lambda, \gamma$. Actually, $\lambda=\gamma$ is also allowed and it can save much computation cost according to the condition of Lemma \ref{lemma:uniform-sparsity-consistency-x-unavailable}. Moreover, we can set $\gamma=0$ in \eqref{eqn:penalized-least-squares} if we only focus on the purpose of identifying the nonzero coefficients.

\subsection{Asymptotic results on the refined estimator of $g(\cdot,\cdot)$}
With the estimator of the error structure, $\hat{\boldsymbol\phi}(u)$, Li and You \cite{liyouJASM} derived a refined estimator of $g(\cdot,\cdot)$, which is denoted as $\hat{g}^*(u,x)$. Theorem \ref{thm:trend-asymptotic-normality-refined} presents the asymptotic property of $\hat{g}^*(u,x)$ and show that $\hat{g}^*(u,x)$ is more efficient than the preliminary estimator $\hat{g}(u,x)$.
\begin{theorem}\label{thm:trend-asymptotic-normality-refined}
Let $h^*=O_p(T^{-\frac{1}{6}})$. Under the conditions (C1)-(C11), $r=\min\{\rho, 1\}>\frac{1}{2}$, it holds for any $u\in(0, 1)$ that
\begin{align*}
&\sqrt{Th^{*2}}
    \begin{bmatrix}
      \begin{pmatrix}
      \hat{g}^*(u,x) \\
      h^*\frac{\partial{\hat{g}^*(u,x)}}{\partial{u}} \\
      h^*\frac{\partial{\hat{g}^*(u,x)}}{\partial{x}}
      \end{pmatrix}
      -
      \begin{pmatrix}
      {g}(u,x) \\
      h^*\frac{\partial{{g}(u,x)}}{\partial{u}} \\
      h^*\frac{\partial{{g}(u,x)}}{\partial{x}}
      \end{pmatrix}
    -
    \begin{pmatrix}
    \frac{\mu_2h^{*2}}{2}\Big(\frac{\partial^2{g(u,x)}}{\partial^2{u}}+\frac{\partial^2{g(u,x)}}{\partial^2{x}}\Big) \\
    0 \\
    0
    \end{pmatrix}+o_p(h^{*2})
   \end{bmatrix}\\
    \stackrel{D}{\longrightarrow}&~N(\mathbf{0}, \mathbf{V}^*_{u,x}),~~~~~~~~~~\hspace{6cm}~\mathrm{ as}~T\to\infty.
\end{align*}
where  $\mathbf{V}^*_{u,x}=\frac{\sigma^2}{f(u, x)}\boldsymbol{\Sigma}$ and $\boldsymbol{\Sigma}$ is defined in \eqref{eqn:Sigma}.
\end{theorem}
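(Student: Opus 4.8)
The plan is to reduce the refined estimator $\hat g^*$ to an \emph{oracle} estimator that pre-whitens with the true autoregressive functions and the true regression function, and then to show that the discrepancy is negligible after multiplication by $\sqrt{Th^{*2}}$. Substituting $Y_{t-k}=g((t-k)/T,X_{t-k})+e_{t-k}$ together with $e_t=\sum_{i=1}^p\phi_i(t/T)e_{t-i}+\epsilon_t$ into the definition of $\hat Y^*_t$ yields
\begin{equation*}
\hat Y^*_t=g(t/T,X_t)+\epsilon_t+r_t,\qquad r_t=r_t^{(1)}+r_t^{(2)},
\end{equation*}
with $r_t^{(1)}=\sum_{k=1}^p\big(\phi_k(t/T)-\hat\phi_k(t/T)\big)e_{t-k}$ and $r_t^{(2)}=\sum_{k=1}^p\hat\phi_k(t/T)\big\{\hat g((t-k)/T,X_{t-k})-g((t-k)/T,X_{t-k})\big\}$. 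By Theorem~\ref{thm:BIC-consistency} the order is identified correctly with probability tending to one, so one works on that event and treats $p$ as known (the boundary effect of dropping the first $\widehat{p_0}$ indices is $O(1/T)$). Since the map $\mathbf Y\mapsto(\hat g^*,h^*\partial_u\hat g^*,h^*\partial_x\hat g^*)^\top$ is linear in the response, the bracketed quantity in the theorem equals $B^*(u,x)+V^{*}(u,x)+R^*(u,x)$, where $B^*$ is the bias generated by the smooth part $g(t/T,X_t)$, $V^*=(\tfrac1T\mathbf Z^{*\top}\mathbf W^*\mathbf Z^*)^{-1}\tfrac1T\mathbf Z^{*\top}\mathbf W^*\boldsymbol\epsilon$ is the term linear in the innovations, and $R^*=(\tfrac1T\mathbf Z^{*\top}\mathbf W^*\mathbf Z^*)^{-1}\tfrac1T\mathbf Z^{*\top}\mathbf W^*\mathbf r$ is the plug-in remainder.

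For $B^*$ and $V^*$ one reruns the proof of Theorem~\ref{thm:trend-asymptotic-normality} with $h$ replaced by $h^*$ and $e_t$ replaced by $\epsilon_t$. The bias expansion \eqref{eqn:ghat-asymptotic-bias-expansion}--\eqref{eqn:ghat-asymptotic-bias} does not involve the error, so $B^*=\big(\tfrac{\mu_2h^{*2}}{2}(\partial^2_u g+\partial^2_x g),0,0\big)^\top+o_p(h^{*2})$. For $V^*$, note that $\{X_t,\epsilon_t\}$ inherits the $\alpha$-mixing property of (C2) since $\epsilon_t$ is a finite linear combination of $e_t,\dots,e_{t-p}$; the relation \eqref{eqn:et-var} is replaced by the exact identity $E(\epsilon_t^2)=\sigma^2$, and because $\{\epsilon_t\}$ is i.i.d.\ the covariance terms in \eqref{eqn:Qt-var} vanish identically. (Equivalently, the summands $\mathbf Z^*_tK_{h^*}(t/T-u)K_{h^*}(X_t-x)\epsilon_t$ form a martingale-difference array for the filtration $\sigma(X_s,e_s:s\le t-1)\vee\sigma(X_t)$, by $E(\epsilon_t\mid e_{t-1},\dots,X_t,X_{t-1},\dots)=0$, and a martingale CLT applies directly.) Hence $\sqrt{Th^{*2}}\,\tfrac1T\mathbf Z^{*\top}\mathbf W^*\boldsymbol\epsilon\stackrel{D}{\longrightarrow}N(\mathbf 0,\sigma^2 f(u,x)\boldsymbol\Sigma_0)$ with $\boldsymbol\Sigma_0$ the $3\times3$ matrix in \eqref{eqn:zt-var}, and sandwiching with $\tfrac1T\mathbf Z^{*\top}\mathbf W^*\mathbf Z^*\stackrel{p}{\longrightarrow}f(u,x)\otimes\diag\{1,\mu_2,\mu_2\}$ (as in \eqref{eqn:ztwz-limit}, the covariate being replaceable by its stationary approximation at cost $o_p(1)$ by (C1)) gives $\sqrt{Th^{*2}}V^*\stackrel{D}{\longrightarrow}N(\mathbf 0,\mathbf V^*_{u,x})$ with $\mathbf V^*_{u,x}=\frac{\sigma^2}{f(u,x)}\boldsymbol\Sigma$.

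It then remains to show $\sqrt{Th^{*2}}R^*=o_p(1)$, i.e.\ $\tfrac1T\mathbf Z^{*\top}\mathbf W^*\mathbf r=o_p((Th^{*2})^{-1/2})$; since $f$ is bounded below, $(\tfrac1T\mathbf Z^{*\top}\mathbf W^*\mathbf Z^*)^{-1}=O_p(1)$, so it suffices to control $\tfrac1T\mathbf Z^{*\top}\mathbf W^*\mathbf r$ entrywise. The $r^{(1)}$-part is handled by the uniform consistency of $\hat{\boldsymbol\phi}$ (Lemma~\ref{lemma:uniform-consistency-x-unavailable}, or a uniform-in-$u$ strengthening of Theorem~\ref{thm:alpha-asymptotic-normality}): $\max_t\|\hat{\boldsymbol\phi}(t/T)-\boldsymbol\phi(t/T)\|=O_p\big(\sqrt{\log(1/h_e)/(Th_e)}\big)$, while $\tfrac1T\sum_tK_{h^*}(t/T-u)K_{h^*}(X_t-x)\big(\tfrac{t/T-u}{h^*}\big)^i\big(\tfrac{X_t-x}{h^*}\big)^j|e_{t-k}|=O_p(1)$ by the arguments used for \eqref{eqn:ztwz-limit}; under $h_e=O(T^{-1/5})$ and $h^*\asymp T^{-1/6}$ this product is $o_p((Th^{*2})^{-1/2})$. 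For the $r^{(2)}$-part, decompose $\hat g-g=\hat g^B+\hat g^V$ as in the proof of Theorem~\ref{thm:trend-convergence-rate}: the bias contribution is uniformly $O(h^2+T^{-r}h^{-1})$, negligible under (C7) and (C9); the stochastic contribution is obtained by inserting $\hat g^V((t-k)/T,X_{t-k})$, itself a $K_h$-weighted average of $\{e_s\}$, and after interchanging the two smoothings becomes a centred double-indexed sum in the errors whose second moment is shown, using (C2) and (C6), to be $o((Th^{*2})^{-1})$. Combining these bounds gives $\sqrt{Th^{*2}}R^*=o_p(1)$, and with the oracle limit the theorem follows.

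The hard part is exactly this last estimate: a crude bound $|r_t^{(2)}|\le\sup_{u,x}|\hat g-g|$ does \emph{not} suffice (under (C9) the preliminary stochastic rate $\sqrt{\log T/(Th^2)}$ is larger than $(Th^{*2})^{-1/2}$), so one must exploit the additional averaging obtained when the stochastic part of $\hat g-g$ is re-smoothed against $K_{h^*}$; bounding the variance of the resulting double sum of \emph{autocorrelated} errors requires carefully tracking the total kernel weight received by each $e_s$ and invoking the mixing and moment conditions, and it is here that the bandwidth restrictions $h=o(T^{-1/5})$ and $h^*\asymp T^{-1/6}$ are essential. Finally, since $\gamma_0(u)\ge\sigma^2$ for every $u$ (the innovation variance never exceeds the variance of the locally stationary AR process, whence $\int_0^1\gamma_0(u)\,du\ge\sigma^2$), one obtains $\mathbf V^*_{u,x}\le\mathbf V_{u,x}$ in the positive-semidefinite order, which is the claimed efficiency gain of the refined estimator over the preliminary one.
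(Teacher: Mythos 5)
Your decomposition is exactly the one the paper uses: after substituting the model into $\hat Y^*_t$ you obtain the same four pieces that the paper labels $J_1,\dots,J_4$ (your $B^*$, $V^*$, and the two halves of $R^*$), and your treatment of the bias $B^*$ and of the innovation term $V^*$ (rerunning the proof of Theorem \ref{thm:trend-asymptotic-normality} with $e_t$ replaced by the i.i.d.\ $\epsilon_t$, so that $\int_0^1\gamma_0(u)\,du$ collapses to $\sigma^2$) coincides with the paper's steps (i) and (ii); your martingale-difference observation is a clean alternative justification for the CLT there. Your handling of $r^{(1)}$ via the uniform rate for $\hat{\boldsymbol\phi}$ matches the paper's step (iii) and is in fact slightly more careful, since the paper applies a pointwise rate to $\max_t\Vert\hat{\boldsymbol\phi}(t/T)-\boldsymbol\phi(t/T)\Vert$.

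The genuine divergence is in the plug-in remainder $r^{(2)}$ (the paper's $J_4$). The paper bounds $\Delta$ by $p\,C\,\sup_{u,x}\vert\hat g(u,x)-g(u,x)\vert=O_p\big(\sqrt{\log T/(Th^2)}+h^2\big)$ and then asserts $J_4=o_p(h^{*2})$ ``by the standard results from density estimation.'' As you point out, this crude bound cannot close the argument: with $h^*\asymp T^{-1/6}$ one needs $J_4=o_p(T^{-1/3})$, while $\sqrt{\log T/(Th^2)}\geq C\,T^{-3/10}\sqrt{\log T}$ for any $h=O(T^{-1/5})$, and enlarging $h$ enough to suppress the stochastic part would destroy the requirement $h=o(h^*)$ needed to keep the first-stage bias out of the limit. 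So your observation that one must exploit the second smoothing of $\hat g^V$ against $K_{h^*}$ --- turning the remainder into a centred double-indexed sum in the autocorrelated errors whose variance is controlled via (C2) and (C6) --- identifies the real content of this step, which the paper's own proof glosses over. That said, your proof stops at a sketch precisely there: the claim that the second moment of the double sum is $o\big((Th^{*2})^{-1}\big)$ is asserted rather than derived, so to be complete you would need to carry out the covariance bookkeeping (the total kernel weight received by each $e_s$ after interchanging the two smoothings, together with the mixing bound on the covariances). Your closing remark that $\gamma_0(u)\geq\sigma^2$ yields $\mathbf V^*_{u,x}\preceq\mathbf V_{u,x}$ is correct and makes explicit the efficiency comparison that the paper only states informally.
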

\begin{proof} By model \eqref{model:ourmodel}, it is easy to have
\begin{align}
\hat{Y}_t^*&=Y_t-\sum_{k=1}^p\hat{\phi}_k(t/T)\big(Y_{t-k}-\hat{g}((t-k)/T, X_{t-k})\big) \notag\\
   &= g(t/T, X_{t})+\epsilon_t+\sum_{k=1}^p\big({\phi}_k(t/T)-\hat{\phi}_k(t/T)\big)e_{t-k}+\sum_{k=1}^p\hat{\phi}_k(t/T)\big\{\hat{g}((t-k)/T, X_{t-k}) \notag\\
   &~~~~-g((t-k)/T, X_{t-k})\big\}. \label{eqn:Yt-hat-star}
\end{align}
Let $\mathbf{M}_T=\frac{1}{T}\mathbf{Z}^*(u,x)^{\top}\mathbf{W}^*(u,x)\mathbf{Z}^{*}(u,x)$. It follows from \eqref{eqn:Yt-hat-star} that
\begin{align*}
\begin{pmatrix}
      \hat{g}^*(u,x) \\
      h_e\frac{\partial{\hat{g}^*(u,x)}}{\partial{u}} \\
      h_e\frac{\partial{\hat{g}^*(u,x)}}{\partial{x}}
    \end{pmatrix}
  &= \mathbf{M}_T^{-1}
        \begin{pmatrix}
      \frac{1}{T}\sum_{t=1}^TK_{h^*}(\frac{t}{T}-u)K_{h^*}(X_t-x)g(\frac{t}{T},X_t) \\
      \frac{1}{T}\sum_{t=1}^TK_{h^*}(\frac{t}{T}-u)K_{h^*}(X_t-x)\big(\frac{\frac{t}{T}-u}{h^*}\big)g(\frac{t}{T},X_t) \\
      \frac{1}{T}\sum_{t=1}^TK_{h^*}(\frac{t}{T}-u)K_{h^*}(X_t-x)(\frac{X_t-x}{h^*})g(\frac{t}{T},X_t)
    \end{pmatrix} \\
  &~~+ \mathbf{M}_T^{-1}
        \begin{pmatrix}
      \frac{1}{T}\sum_{t=1}^TK_{h^*}(\frac{t}{T}-u)K_{h^*}(X_t-x)\epsilon_t \\
      \frac{1}{T}\sum_{t=1}^TK_{h^*}(\frac{t}{T}-u)K_{h^*}(X_t-x)\big(\frac{\frac{t}{T}-u}{h^*}\big)\epsilon_t \\
      \frac{1}{T}\sum_{t=1}^TK_{h^*}(\frac{t}{T}-u)K_{h^*}(X_t-x)(\frac{X_t-x}{h^*})\epsilon_t
    \end{pmatrix} \\
  &~~+ \mathbf{M}_T^{-1}
        \begin{pmatrix}
      \frac{1}{T}\sum_{t=1}^TK_{h^*}(\frac{t}{T}-u)K_{h^*}(X_t-x)\sum_{k=1}^p\big({\phi}_k(\frac{t}{T})-\hat{\phi}_k(\frac{t}{T})\big)e_{t-k} \\
      \frac{1}{T}\sum_{t=1}^TK_{h^*}(\frac{t}{T}-u)K_{h^*}(X_t-x)\big(\frac{\frac{t}{T}-u}{h^*}\big)\sum_{k=1}^p\big({\phi}_k(\frac{t}{T})-\hat{\phi}_k(\frac{t}{T})\big)e_{t-k} \\
      \frac{1}{T}\sum_{t=1}^TK_{h^*}(\frac{t}{T}-u)K_{h^*}(X_t-x)(\frac{X_t-x}{h^*})\sum_{k=1}^p\big({\phi}_k(\frac{t}{T})-\hat{\phi}_k(\frac{t}{T})\big)e_{t-k}
    \end{pmatrix} \\
  &~~+ \mathbf{M}_T^{-1}
        \begin{pmatrix}
      \frac{1}{T}\sum_{t=1}^TK_{h^*}(\frac{t}{T}-u)K_{h^*}(X_t-x)\Delta \\
      \frac{1}{T}\sum_{t=1}^TK_{h^*}(\frac{t}{T}-u)K_{h^*}(X_t-x)\big(\frac{\frac{t}{T}-u}{h^*}\big)\Delta \\
      \frac{1}{T}\sum_{t=1}^TK_{h^*}(\frac{t}{T}-u)K_{h^*}(X_t-x)(\frac{X_t-x}{h^*})\Delta
    \end{pmatrix} \\
  &\stackrel{\vartriangle}{=} J_1+J_2+J_3+J_4,
\end{align*}
where $\Delta=\sum_{k=1}^p\hat{\phi}_k(t/T)\big[\hat{g}\big(\frac{t-k}{T}, X_{t-k}\big)-g\big(\frac{t-k}{T}, X_{t-k}\big)\big]$.

\noindent(i) Following the same way as the proof of Theorem \ref{thm:trend-asymptotic-normality}, we can easily show that
\begin{equation*}
  \sqrt{Th^*}\begin{bmatrix}
      J_1-
      \begin{pmatrix}
      {g}(u,x) \\
      h\frac{\partial{{g}(u,x)}}{\partial{u}} \\
      h\frac{\partial{{g}(u,x)}}{\partial{x}}
      \end{pmatrix}
    \end{bmatrix}
    =
    \begin{pmatrix}
    \frac{\mu_2h_e^2}{2}\Big(\frac{\partial^2{g(u,x)}}{\partial^2{u}}+\frac{\partial^2{g(u,x)}}{\partial^2{x}}\Big) \\
    0 \\
    0
    \end{pmatrix}+o_p(h^{*2}).
\end{equation*}

\noindent(ii) By Theorem \ref{thm:alpha-asymptotic-normality} and the proof of Theorem \ref{thm:trend-asymptotic-normality}, it follows that
\begin{equation*}
  \sqrt{Th^*}J_2\stackrel{D}{\longrightarrow}~N(\mathbf{0},\mathbf{V}^*_{u,x}), ~~~~~~~~\mathrm{as}~T\to\infty,
\end{equation*}
where $\mathbf{V}^*_{u,x}=\frac{\sigma^2}{f(u, x)}\boldsymbol{\Sigma}$ and $\boldsymbol{\Sigma}$ is defined in \eqref{eqn:Sigma}.

\noindent(iii) Theorem \ref{thm:alpha-asymptotic-normality} implies that
$  \Vert\hat{\boldsymbol\phi}(u)-\boldsymbol\phi(u)\Vert=O_p(h_e^2+\frac{1}{Th_e})$. So it holds that
\begin{align*}
  &~~~~~\Bigg\Vert\mathbf{M}_T^{-1}
        \begin{pmatrix}
      \frac{1}{T}\sum_{t=1}^TK_{h^*}(\frac{t}{T}-u)K_{h^*}(X_t-x)\sum_{k=1}^p\big({\phi}_k(\frac{t}{T})-\hat{\phi}_k(\frac{t}{T})\big)e_{t-k} \\
      \frac{1}{T}\sum_{t=1}^TK_{h^*}(\frac{t}{T}-u)K_{h^*}(X_t-x)\big(\frac{\frac{t}{T}-u}{h^*}\big)\sum_{k=1}^p\big({\phi}_k(\frac{t}{T})-\hat{\phi}_k(\frac{t}{T})\big)e_{t-k} \\
      \frac{1}{T}\sum_{t=1}^TK_{h^*}(\frac{t}{T}-u)K_{h^*}(X_t-x)(\frac{X_t-x}{h^*})\sum_{k=1}^p\big({\phi}_k(\frac{t}{T})-\hat{\phi}_k(\frac{t}{T})\big)e_{t-k}
    \end{pmatrix}\Bigg\Vert \\
  &\leq\Bigg\Vert\mathbf{M}_T^{-1}
        \begin{pmatrix}
      \frac{1}{T}\sum_{t=1}^TK_{h^*}(\frac{t}{T}-u)K_{h^*}(X_t-x) \\
      \frac{1}{T}\sum_{t=1}^TK_{h^*}(\frac{t}{T}-u)K_{h^*}(X_t-x)\big(\frac{\frac{t}{T}-u}{h^*}\big) \\
      \frac{1}{T}\sum_{t=1}^TK_{h^*}(\frac{t}{T}-u)K_{h^*}(X_t-x)(\frac{X_t-x}{h^*})
    \end{pmatrix}\Bigg\Vert \max_{t\in\{p+1,\cdots,T\}}\Vert\sum_{k=1}^p\big({\phi}_k(\frac{t}{T})-\hat{\phi}_k(\frac{t}{T})\big)e_{t-k}\Vert \\
  &=O_p(h_e^2).
\end{align*}
Therefore, $J_3=o_p\bigg(h^{*2}+\sqrt{\frac{\log T}{Th^{*2}}}\bigg)$.

\noindent(iv) Condition (C8) implies that there exist some constant $C$ such that $|\phi_k(t/T)|\leq C<\infty$ for all $k=1,\cdots,p$, then
\begin{align*}
  \Delta &\leq p\times\max_{k\in\{1,\cdots,p\}}\{\phi_k(t/T)+o_p(1)\}\times \sup_{u,x}|\hat{g}(u,x)-g(u,x)| \\
  &=p[C+o_p(1)]\times O_p\Big(\sqrt{\frac{\log T}{Th^2}}+h^2\Big) \\
  &=O_p\Big(\sqrt{\frac{\log T}{Th^2}}+h^2\Big).
\end{align*}
By the standard results from density estimation, it holds that $J_4=o_p(h^{*2})$.

Combining the results of (i)-(iv), we have that
\begin{align*}
&~~~~~\sqrt{Th^{*2}}
    \begin{bmatrix}
      \begin{pmatrix}
      \hat{g}^*(u,x) \\
      h^*\frac{\partial{\hat{g}^*(u,x)}}{\partial{u}} \\
      h^*\frac{\partial{\hat{g}^*(u,x)}}{\partial{x}}
      \end{pmatrix}
      -
      \begin{pmatrix}
      {g}(u,x) \\
      h^*\frac{\partial{{g}(u,x)}}{\partial{u}} \\
      h^*\frac{\partial{{g}(u,x)}}{\partial{x}}
      \end{pmatrix}
    -
    \begin{pmatrix}
    \frac{\mu_2h^{*2}}{2}\Big(\frac{\partial^2{g(u,x)}}{\partial^2{u}}+\frac{\partial^2{g(u,x)}}{\partial^2{x}}\Big) \\
    0 \\
    0
    \end{pmatrix}+o_p(h^{*2})
   \end{bmatrix} \\
  &=\sqrt{Th^{*2}}
    \begin{bmatrix}
      J_1+J_2-
      \begin{pmatrix}
      {g}(u,x) \\
      h^*\frac{\partial{{g}(u,x)}}{\partial{u}} \\
      h^*\frac{\partial{{g}(u,x)}}{\partial{x}}
      \end{pmatrix}
    -
    \begin{pmatrix}
    \frac{\mu_2h^{*2}}{2}\Big(\frac{\partial^2{g(u,x)}}{\partial^2{u}}+\frac{\partial^2{g(u,x)}}{\partial^2{x}}\Big) \\
    0 \\
    0
    \end{pmatrix}+o_p(h^{*2})
   \end{bmatrix} \\
  &\stackrel{D}{\longrightarrow}~N(\mathbf{0}, \mathbf{V}^*_{u,x}),~~~~~~~~~~~~\mathrm{as}~T\to\infty.
\end{align*}
\end{proof}

\begin{remark} There are two bandwidths $h$ and $h^*$ in the procedure of estimating $g(u,x)$. Theorem \ref{thm:trend-asymptotic-normality-refined} shows that the bandwidth $h^*$ in the refined estimator should be of the standard order of estimating a binary nonparametric function. However, the bandwidth $h$ for the preliminary estimator $\hat{g}(u, x)$ should be of smaller order $h=o(h^*)$ to control the bias in the first step of the estimation. Specially, to ensure the optimal convergence rate of the estimators of the error autoregressive structure as shown in Theorem \ref{thm:alpha-asymptotic-normality}, the order of $h$ should be even smaller than the bandwidth $h_e$, i.e., $h=o(T^{-1/5})$. In practice, standard bandwidth selection methods can be utilized for $h^*$ and $h_e$. Then $h^*$ can be multiplied by a constant like 0.5 to obtain $h$ as suggested by \cite{liu2010nonparametric}.
\end{remark}
\section{Numerical studies}
 \cite{liyouJASM} conducted two real data studies for their proposed method. In order to give an overall evaluation on the finite sample performance of their proposed method, two simulation studies are conducted in this section. The Epanechnikov kernel $K(t)=0.75(1-t^2)_{+}$ is used throughout this section. In each study, the leave-one-out cross-validation is applied to select the optimal bandwidth for $h^*$ and $h_e$, and $h=0.5h^*$ is for the preliminary estimator. Since the results are not very sensitive to the bandwidth, only the case of the optimal bandwidth is reported here.

The first study is designed to compare the performance of the preliminary estimator and the refined estimator of the time-varying nonparametric function.

\begin{example}\label{ex:1} Assume that the order of the error autoregressive structure is known. $Y_t$ is generated by the model
\begin{equation*}
   Y_t = g(t/T, X_t)+e_t, ~~~~~~~~~~~~~t=1, \cdots, T,
\end{equation*}
where $g(u, x)=1.5\cos(2\pi u)x^2$, and the explanatory variable $X_t$ is generated from a time-varying AR(1) process that is locally stationary, i.e.,
\begin{equation*}
  X_t = 0.7t/TX_{t-1}+0.5\xi_t,
\end{equation*}
where $\xi_t\sim N(0, 1)$. The error term $e_t$ is generated by the time-varying autoregressive process $e_t-\sum_{i=1}^5\phi_i(t/T)e_{t-i}=\epsilon_{t}$, where $\epsilon_t$ follows normal distribution $\mathcal{N}(0, \sigma^2)$, and the time-varying autoregressive coefficient function $\boldsymbol\phi(u)=(\phi_1(u),\phi_2(u),\phi_3(u),\phi_4(u),\phi_5(u))^{\top}$ are specified as follows.
\begin{enumerate}[Model (a)]
  \item $\boldsymbol\phi(u) = ((-0.1+0.6\sin(2\pi u), 0, 0, 0, 0)^\top$;
  \item $\boldsymbol\phi(u) = (3(u-0.4)^2-0.6, 0.3, 0, 0, 0)^\top$;
  \item $\boldsymbol\phi(u) = (5(u-0.5)^2-0.6, -1+\sin^2(\pi u), 0, 0, 0)^\top$.
\end{enumerate}
\end{example}

Moreover, the square root of average squared error (RASE) criterion is used to evaluate the performance of the estimators. For an estimator $\hat{g}(u, x)$, its RASE is defined as
\begin{equation*}
  \RASE\big(\hat{g}(u, x)\big) = \bigg[\frac{1}{T}\sum_{t=1}^T\Big(\hat{g}\big(\frac{t}{T}, X_t\big)-g\big(\frac{t}{T}, X_t\big)\Big)^2\bigg]^{1/2}.
\end{equation*}

With $T=200,300,400$, $\sigma=0.5, 1$, and $\boldsymbol\phi(u)$ being model (a)-(c), the empirical mean values and standard deviations (SD) of RASE based on 500 replications are presented in Table \ref{tab:RASE-trend}. For the comparison purpose, the oracle estimator $\hat{g}^{\OR}(u,x)$, which is the local linear estimator of $g(u, x)$ when the time-varying autoregressive error structure is completely known, is also calculated and presented in Table \ref{tab:RASE-trend}.
\begin{table}
\begin{tabular}{llccccccc}
\toprule
$\boldsymbol\phi(u)$    & $\sigma$ & $T$ & \multicolumn{2}{c}{$\hat{g}(u, x)$} &   \multicolumn{2}{c}{$\hat{g^*}(u, x)$} &   \multicolumn{2}{c}{$\hat{g}^{\OR}(u, x)$} \\
\cmidrule(r){4-5}\cmidrule(r){6-7}\cmidrule(r){8-9}
    & &     &   Mean &    SD &   Mean &    SD &   Mean &    SD \\
\midrule
\multirow{4}{1in}{Model (a)} & 0.5 & 200 &  0.253 &  0.029 &  0.166 &  0.027 &  0.161 &  0.024 \\
&    & 300 &  0.208 &  0.024 &  0.145 &  0.022 &  0.142 &  0.021 \\
&    & 400 &  0.182 &  0.020 &  0.136 &  0.019 &  0.135 &  0.018 \\
\cmidrule(r){2-9}
& 1.0 & 200 &  0.505 &  0.058 &  0.302 &  0.052 &  0.290 &  0.046 \\
&    & 300 &  0.415 &  0.047 &  0.248 &  0.042 &  0.242 &  0.037 \\
&    & 400 &  0.362 &  0.040 &  0.222 &  0.035 &  0.219 &  0.031 \\
\midrule
\multirow{4}{1in}{Model (b)} & 0.5 & 200 &  0.312 &  0.048 &  0.189 &  0.040 &  0.161 &  0.024 \\
&    & 300 &  0.260 &  0.041 &  0.165 &  0.035 &  0.142 &  0.021 \\
&    & 400 &  0.231 &  0.031 &  0.154 &  0.027 &  0.135 &  0.018 \\
\cmidrule(r){2-9}
& 1.0 & 200 &  0.624 &  0.096 &  0.351 &  0.080 &  0.290 &  0.046 \\
&    & 300 &  0.519 &  0.081 &  0.292 &  0.070 &  0.242 &  0.037 \\
&    & 400 &  0.459 &  0.062 &  0.262 &  0.055 &  0.219 &  0.032 \\
\midrule
\multirow{4}{1in}{Model (c)} & 0.5 & 200 &  0.366 &  0.079 &  0.193 &  0.047 &  0.161 &  0.024 \\
&    & 300 &  0.322 &  0.074 &  0.162 &  0.035 &  0.142 &  0.021 \\
&    & 400 &  0.298 &  0.070 &  0.149 &  0.029 &  0.135 &  0.018 \\
\cmidrule(r){2-9}
& 1.0 & 200 &  0.732 &  0.159 &  0.359 &  0.098 &  0.290 &  0.046 \\
&    & 300 &  0.643 &  0.148 &  0.286 &  0.073 &  0.242 &  0.037 \\
&    & 400 &  0.593 &  0.141 &  0.251 &  0.061 &  0.219 &  0.032 \\
\bottomrule
\end{tabular}
\caption{Means and SDs of the RASEs of the estimators for $g(u, x)=1.5\cos(2\pi u)x^2$}
\label{tab:RASE-trend}
\end{table}

From Table \ref{tab:RASE-trend}, we can draw the following conclusions:
\begin{itemize}
  \item Under all of three error term models, the refined estimator $\hat{g}^*(u, x)$  has both smaller mean values and standard deviations of RASE than the preliminary estimator $\hat{g}(u, x)$. The improvement of $\hat{g}^*(u, x)$ is getting more significant as the complexity of the error term increases.
  \item An increase in $T$ results in a decrease in the mean values and standard deviations of RASE for all estimators. An increase in $\sigma$ results in an increase in the mean values and standard deviations of RASE for all estimators.
  \item The performance of the refined estimator $\hat{g}^*(u, x)$ is very close to $\hat{g}^{\OR}(u,x)$, which is also consistent with the theoretical results.
\end{itemize}

Figure \ref{fig:trend-boxplot} depicts the boxplots of $\hat{g}(u,x)$ and $\hat{g}^*(u,x)$ at points $(u,x)=(0.2, -0.5), (0.5, 0), (0.75, 0.4)$ under Model (b). It shows that the standard error is decreasing with the increase of $T$, and the bias is negligible when $T$ is large, which is claimed by Theorem \ref{thm:trend-asymptotic-normality-refined}.

\begin{figure}
  \centering
  \includegraphics[width=0.9\textwidth]{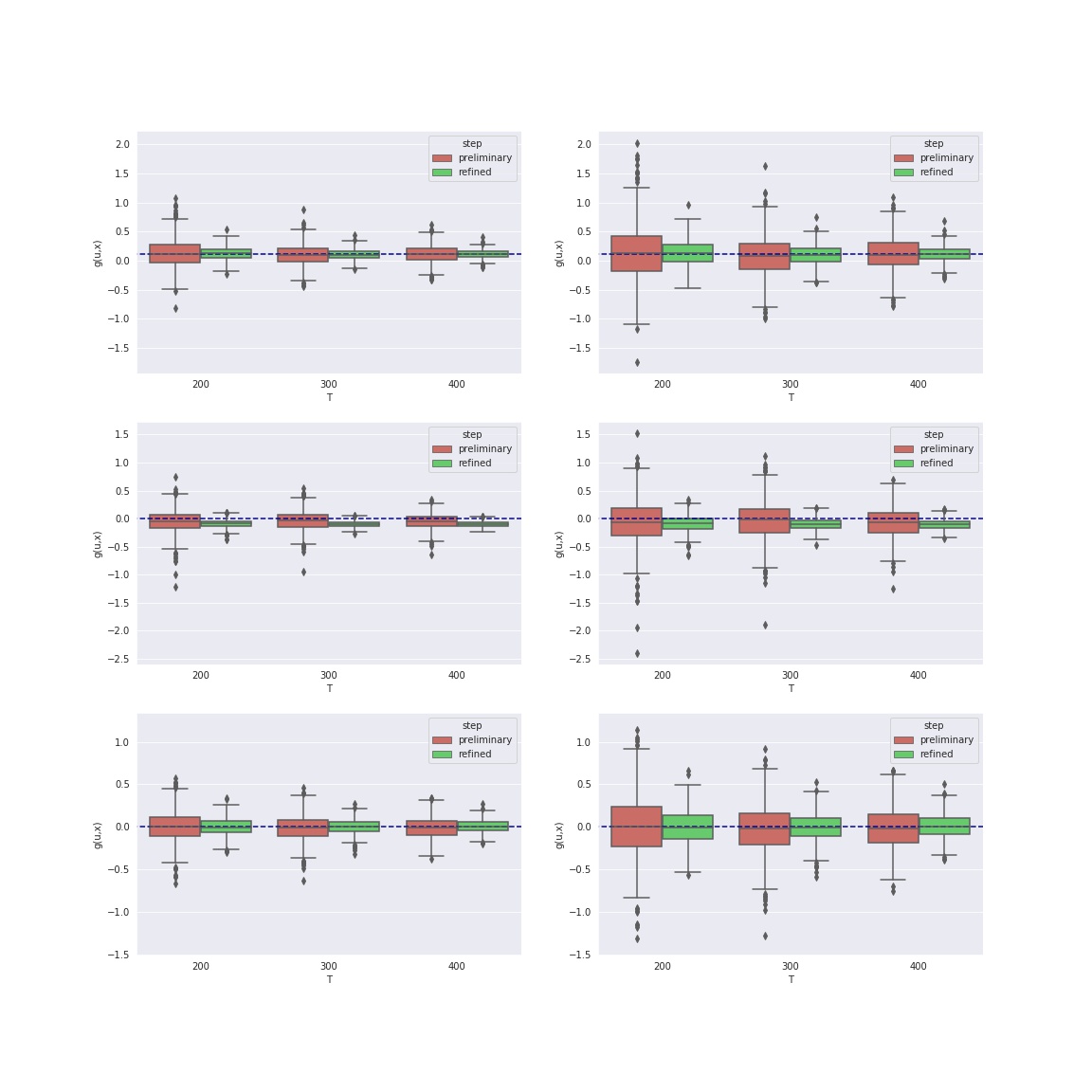}
  \caption{Boxplots of $\hat{g}(u,x)$ and $\hat{g}^*(u,x)$ for Model (b). The left panel: $\sigma=0.5$, the right panel: $\sigma=1$. The top panel: $(u, x) = (0.2, -0.5)$, middle panel: $(u, x) = (0.5, 0)$, bottom panel: $(u, x) = (0.75, 0.4)$}\label{fig:trend-boxplot}
\end{figure}

To further illustrate the performance of the refined estimator, we plot the average refined estimated function, the true function, and the bias of the average estimated function in Figure \ref{fig:averaged-estimate} for case (a) with $T=300$ and $\sigma=0.5$. The estimated surface is consistent with the true surface, which validate the theoretical results. We omit the results for other cases, which are similar.
\begin{figure}
  \centering
  \includegraphics[width=0.9\textwidth]{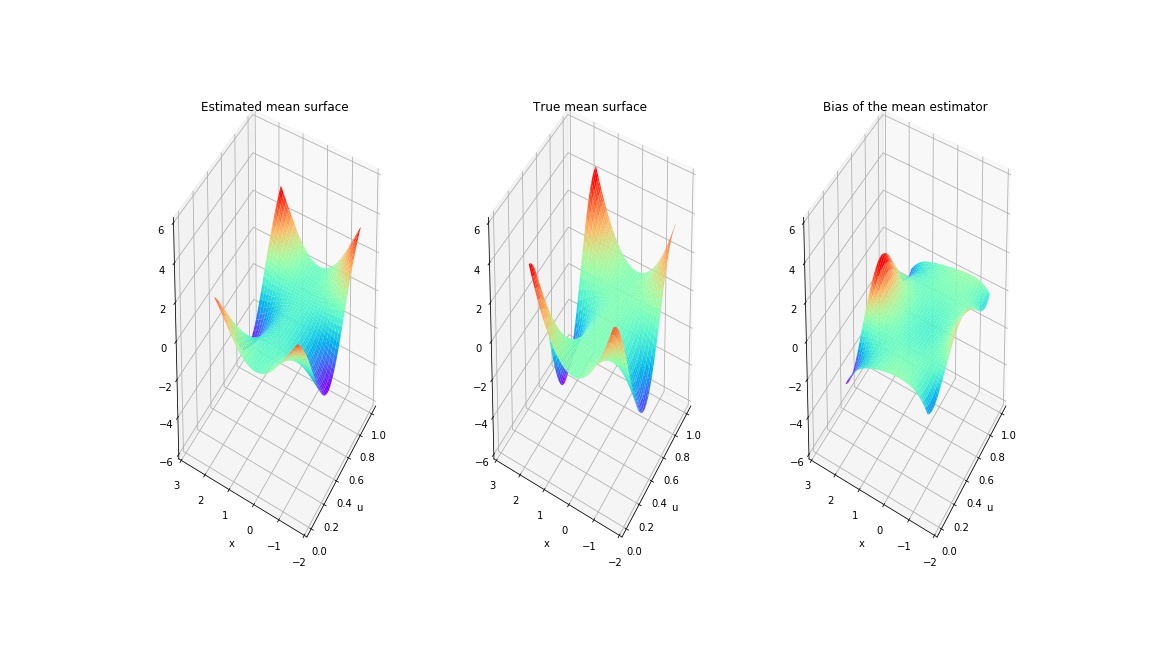}
  \caption{Averaged refined estimated function, true function, and bias of the average estimated function}\label{fig:averaged-estimate}
\end{figure}

\begin{example}
In this example, we aim to show the performance of the ULASSO estimator. The model settings and simulation settings are the same as in Example \ref{ex:1} except that the order of the error autoregressive structure is unknown. It's obvious that Model (a) is a tvAR(1) with $\mathcal{S}_1=\mathcal{S}_2=\{1\}$; Model (b) is a tvAR(2) with $\mathcal{S}_1=\{1, 2\}$ and $\mathcal{S}_2=\{1\}$; Model (c) is a tvAR(2) with $\mathcal{S}_1=\mathcal{S}_2=\{1, 2\}$.

Under the assumption that the order of the error autoregressive structure is unknown, the ULASSO estimator is adopted to estimate the autoregressive structure. $\hat{\phi}_k(u)$ is denoted as the optimal ULASSO estimate of which the shrinkage parameters are determined by the BIC criterion \eqref{eqn:BIC-x-unavailable}. To evaluate the performance of ULASSO estimate, the
result of variable selection (VS, i.e. nonzero coefficient selection) is classified as \cite{wang2012parametric}: 1. underfitted (at least one true nonzero variable is missing); 2. correctly fitted; 3. overfitted (all the significant variables are identified while at least one spurious variable is included).
The percentages in each category are presented under the heading `VS' in Table \ref{tab:BIC}. Similarly, we can partition the results for detecting the true coefficient functions and constant coefficient into these categories. Therefore, a correct selection means that the procedure identified both types correctly, an underfitted indicating missing at least one true function or a coefficient etc. These percentages are given under the heading `VS \& CI' in Table  \ref{tab:BIC}.

From Table \ref{tab:BIC}, we can see that for every model, the percentage of correctly fitted models is satisfying and it increases steadily with the sample size. Furthermore, the percentage of correct selection (VS \& CI) is also acceptable, especially when the sample size is moderate or large. On the other hand the percentage of VS \& CI is slightly less than that of VS. This is not surprising since the convergence speed of the derivative is a little slower than that of the coefficient function.

\begin{table}
\centering
\begin{tabular}{llccccccc}
\toprule
$\boldsymbol\phi(u)$    & $\sigma$ & $T$ & \multicolumn{3}{c}{VS} & \multicolumn{3}{c}{VS $\&$ CI}\\
\cmidrule(r){4-6}\cmidrule(r){7-9}
    & &     &  Under &  Correct &   Over &  Under &  Correct &   Over \\
\midrule
\multirow{4}{1in}{Model (a)} & 0.5 & 200 &  0.050 &    0.872 &  0.078 &  0.182 &    0.734 &  0.084 \\
&    & 300 &  0.006 &    0.916 &  0.078 &  0.030 &    0.888 &  0.082 \\
&    & 400 &  0.000 &    0.962 &  0.038 &  0.008 &    0.952 &  0.040 \\
\cmidrule(r){2-9}
& 1.0 & 200 &  0.206 &    0.724 &  0.070 &  0.294 &    0.626 &  0.080 \\
&    & 300 &  0.048 &    0.880 &  0.072 &  0.064 &    0.856 &  0.080 \\
&    & 400 &  0.000 &    0.938 &  0.062 &  0.004 &    0.928 &  0.068 \\
\midrule
\multirow{4}{1in}{Model (b)} & 0.5 & 200 &  0.088 &    0.712 &  0.200 &  0.274 &    0.496 &  0.230 \\
&    & 300 &  0.024 &    0.802 &  0.174 &  0.092 &    0.692 &  0.216 \\
&    & 400 &  0.000 &    0.890 &  0.110 &  0.020 &    0.846 &  0.134 \\
\cmidrule(r){2-9}
& 1.0 & 200 &  0.114 &    0.672 &  0.214 &  0.314 &    0.454 &  0.232 \\
&    & 300 &  0.028 &    0.790 &  0.182 &  0.100 &    0.688 &  0.212 \\
&    & 400 &  0.000 &    0.870 &  0.130 &  0.026 &    0.820 &  0.154 \\
\midrule
\multirow{4}{1in}{Model (c)} & 0.5 & 200 &  0.172 &    0.480 &  0.348 &  0.208 &    0.404 &  0.388 \\
&    & 300 &  0.006 &    0.608 &  0.386 &  0.006 &    0.554 &  0.440 \\
&    & 400 &  0.002 &    0.694 &  0.304 &  0.004 &    0.650 &  0.346 \\
\cmidrule(r){2-9}
& 1.0 & 200 &  0.192 &    0.450 &  0.358 &  0.234 &    0.382 &  0.384 \\
&    & 300 &  0.006 &    0.570 &  0.424 &  0.008 &    0.528 &  0.464 \\
&    & 400 &  0.004 &    0.664 &  0.332 &  0.006 &    0.602 &  0.392 \\
\bottomrule
\end{tabular}
\caption{The percentage of underfitted / correctly fitted /overfitted fitted models selection.
VS: variable selection; CI: constant coefficient identification}
\label{tab:BIC}
\end{table}

To further evaluate the ULASSO estimate, the mean values and standard deviations of  RASE of $\hat{\phi}_k(u)$ are calculated and presented in Table \ref{tab:RASE}. Moreover, the performance of constant coefficient estimation for $\phi_2(\cdot)\equiv\phi_2$ in Model (b) is also evaluated. If it is identified as the constant, i.e., $2\notin\hat{\mathcal{S}}_2$, the estimate of $\phi_2$ is taken as the mean of estimate at each time point, i.e.,
\begin{equation*}
  \hat\phi_2 = \frac{1}{T}\sum_{t=1}^T\hat\phi_2(t/T).
\end{equation*}
The bias and the standard error (SE) of $\hat\phi_2$ are included in the last two columns of Table \ref{tab:RASE}. It shows from Table \ref{tab:RASE} that under all of three error term models, an increase in $T$ results in a decrease in the mean values and standard deviations of RASE for all estimates. An increase in $\sigma$ results in an increase in the mean values and standard deviations of RASE. Moreover, for the constant coefficient estimation $\hat{\phi}_2$ for Model (b), the bias and the standard error of $\hat\phi_2$ also decreases with the increase of $T$. From Table \ref{tab:RASE}, we can conclude that the performance of the estimates for both time-varying coefficient function and the non-zero constant coefficient are satisfying.

\begin{table}
\centering
\begin{tabular}{llccccccc}
\toprule
$\boldsymbol\phi(u)$     & $\sigma$ & $T$ & \multicolumn{4}{c}{RASE} & \multicolumn{2}{c}{estimate}\\
\cmidrule(r){4-7}\cmidrule(r){8-9}
    & &     &   \multicolumn{2}{c}{$\hat{\phi}_1(\cdot)$} &   \multicolumn{2}{c}{$\hat{\phi}_2(\cdot)$} &   \multicolumn{2}{c}{$\hat{\phi}_2$} \\
\cmidrule(r){4-5}\cmidrule(r){6-7}\cmidrule(r){8-9}
    & &     &   mean &    SD &   mean &    SD &   bias &    SE \\
\midrule
\multirow{4}{1in}{Model (a)} & 0.5 & 200 &  0.260 &  0.066 &    $-$ &    $-$ &    $-$ &    $-$ \\
&    & 300 &  0.223 &  0.047 &    $-$ &    $-$ &    $-$ &    $-$ \\
&    & 400 &  0.189 &  0.038 &    $-$ &    $-$ &    $-$ &    $-$ \\
\cmidrule(r){2-9}
& 1.0 & 200 &  0.279 &  0.089 &    $-$ &    $-$ &    $-$ &    $-$ \\
&    & 300 &  0.228 &  0.060 &    $-$ &    $-$ &    $-$ &    $-$ \\
&    & 400 &  0.205 &  0.034 &    $-$ &    $-$ &    $-$ &    $-$ \\
\midrule
\multirow{4}{1in}{Model (b)} & 0.5 & 200 &  0.200 &  0.047 &  0.136 &  0.075 &  0.094 &  0.107 \\
&    & 300 &  0.173 &  0.038 &  0.102 &  0.061 &  0.057 &  0.088 \\
&    & 400 &  0.158 &  0.031 &  0.075 &  0.042 &  0.028 &  0.066 \\
\cmidrule(r){2-9}
& 1.0 & 200 &  0.206 &  0.056 &  0.149 &  0.078 &  0.116 &  0.108 \\
&    & 300 &  0.173 &  0.040 &  0.109 &  0.064 &  0.071 &  0.088 \\
&    & 400 &  0.162 &  0.032 &  0.081 &  0.045 &  0.040 &  0.068 \\
\midrule
\multirow{4}{1in}{Model (c)} & 0.5 & 200 &  0.275 &  0.082 &  0.226 &  0.101 &    $-$ &    $-$ \\
&    & 300 &  0.211 &  0.054 &  0.155 &  0.049 &    $-$ &    $-$ \\
&    & 400 &  0.179 &  0.052 &  0.125 &  0.040 &    $-$ &    $-$ \\
\cmidrule(r){2-9}
& 1.0 & 200 &  0.280 &  0.084 &  0.240 &  0.110 &    $-$ &    $-$ \\
&    & 300 &  0.223 &  0.055 &  0.155 &  0.052 &    $-$ &    $-$ \\
&    & 400 &  0.194 &  0.054 &  0.127 &  0.041 &    $-$ &    $-$ \\
\bottomrule
\end{tabular}
\caption{Means and SDs of the RASEs of the estimator for $\boldsymbol\phi(u)$ and bias and SE of the constant coefficient estimation }\label{tab:RASE}
\end{table}

\end{example}
\section{Conclusion}
In this paper, we focus on the theoretical results of statistical inference for a class nonparametric regression models with time-varying
regression function, local stationary regressors and time-varying AR(p) (tvAR(p)) error process \eqref{model:ourmodel},
for which the estimation procedure is studied by \cite{liyouJASM} with no theoretical discussion. With regard to their estimators, we establish the convergence rate and asymptotic normality of the preliminary estimation $\hat{g}(u,x)$ of nonparametric regression function, show that their estimator of error term $\hat{\phi}_k(u)$ has the same convergence rate and the asymptotic distribution as the estimator when $e_t$ is observable, present the asymptotic property of the refined estimation $\hat{g}^*(u,x)$ of nonparametric regression function, and show that $\hat{g}^*(u,x)$ is more efficient than the preliminary estimator $\hat{g}(u,x)$. In addition, with regard to the ULASSO method for variable selection and constant coefficient detection for error term structure, we show that the ULASSO estimator can identify the true error term structure consistently. Simulation results have been provided to illustrate the finite sample performance of the estimators and validate our theoretical discussion on the properties of the estimators.

The model introduced by \cite{liyouJASM} is with time-varying AR(p) (tvAR(p)) error process. Actually, it can be extended to the model with time-varying autoregressive moving average model (tvARMA(p,q)), which is also local stationary. The essential thought on this article can be also extended to the estimations under the model with tvARMA(p,q) error term.

\section*{Acknowledgements}
We would like to express our gratitude to Dr. Jinhong You for his heuristic discussion on the article. Our thanks also go to the referees for their time and comments.


\end{document}